\documentclass[a4paper, 11pt]{amsart}
\usepackage{dsfont}
\usepackage{amssymb}
\usepackage{amsfonts}
\usepackage{amsthm}
\usepackage{amsmath}
\usepackage{verbatim}
\usepackage{a4wide}
\usepackage{bm}
\usepackage{color}
\usepackage{appendix}
\usepackage{amssymb}
\usepackage{bbm}
\usepackage{fancybox}
\usepackage{fancyhdr}
\usepackage{graphicx}
\usepackage{tristan}
\usepackage{color}
\usepackage[cm]{fullpage}
\usepackage{subfigure}

\newtheorem{lemma}{Lemma}[section]

\theoremstyle{definition}
\newtheorem{definition}[lemma]{Definition}
\theoremstyle{definition}
\newtheorem{Rem}[lemma]{Remark}
\newtheorem{The}[lemma]{Theorem}
\newtheorem{remark}[lemma]{Remark}
\theoremstyle{definition}

\newtheorem{Hyp}[lemma]{Hypothesis}

{\catcode `\@=11 \global\let\AddToReset=\@addtoreset}
\AddToReset{equation}{section}

\newcommand{\veps}{{\varepsilon }}

\newcommand{\bv}{\mathbf v}

\newcommand{\del}{\partial}

\renewcommand{\vec}[1]{\geovec{#1}}

\newcommand{\hatv}{\widehat{\vec v}}
\newcommand{\shatv}{\widehat{v}}

\newcommand{\hatf}{\widehat{\vec f}}

\newcommand{\eps}{\varepsilon}

\newcommand{\dgp}{\partial_x^+}
\newcommand{\dgm}{\partial_x^-}
\newcommand{\dgpm}{\partial_x^\pm}


\newcommand{\Cetad}{C_{\underline{\eta}}}
\newcommand{\Cetau}{C_{\overline{\eta}}}
\newcommand{\Cfu}{C_{\overline{\vec f}}}



\setboolean{showtodo}{true}
\setboolean{shownotes}{true}
\setboolean{showchanges}{true}
\setboolean{usemathrsfs}{true}
\setlength{\parindent}{12pt} 

%
\author{Jan Giesselmann} 
\address{Jan Giesselmann\newline
 Institute of Applied Analysis and Numerical Simulation\newline
University of Stuttgart\newline
Pfaffenwaldring 57\newline
D-70563 Stuttgart\newline
  Germany} 
\curraddr{}
\email{jan.giesselmann@mathematik.uni-stuttgart.de}
\author{Tristan Pryer}
\address{
   Tristan Pryer,\newline
     Department of Mathematics and Statistics,\newline
     University of Reading,\newline
     Whiteknights,\newline
     PO Box 220,\newline
     Reading RG6 6AX,\newline
     UK
}
\email{T.Pryer@reading.ac.uk}
\thanks{
We gratefully acknowledge that this work  was supported by the 
German Research Foundation (DFG) via SFB TRR 75 `Tropfendynamische Prozesse unter extremen Umgebungsbedingungen'. TP also gratefully acknowledges the support of the EPSRC grant EP/P000835/1 ``Adaptive Regularisation''.
}

\title[]
{A posteriori analysis for dynamic model adaptation in convection dominated problems}
\date{\today}

\begin{document}
\begin{abstract}
  In this work we present an a posteriori error indicator for
  approximation schemes of Runge--Kutta--discontinuous--Galerkin type
  arising in applications of compressible fluid flows.  The purpose of
  this indicator is not only for mesh adaptivity, we also make use of
  this to drive model adaptivity. This is where a perhaps costly
  \emph{complex} model and a cheaper \emph{simple} model are solved
  over different parts of the domain. The a posteriori bound we derive
  indicates the regions where the \emph{complex} model can be
  relatively well approximated with the cheaper one. One such example
  which we choose to highlight is that of the Navier--Stokes--Fourier
  equations approximated by Euler's equations.
\end{abstract}
\maketitle

\section{Introduction}

This paper is concerned with model adaptation in the context of
advection dominated, compressible fluid flows as they appear in, for
example, aerospace engineering.  Compressible fluid flows can be
described by different models having different levels of complexity.
One example are the compressible Euler equations which are the limit
of the Navier-Stokes-Fourier (NSF) equations when heat conduction and
viscosity go to zero.  Arguably the NSF system provides a more
accurate description of reality since viscous effects which are
neglected in Euler's equation play a dominant role in certain flow
regimes like thin regions near obstacles, for example, aerofoils
exhibiting Prandtls's boundary layers \cite{Nic73}.  However, viscous
effects are negligible in large parts of the computational domain
where convective effects dominate \cite{BCR89,CW01,DGQ14}. Thus, it is
desirable to avoid the effort of handling the viscous terms in these
parts of the domain, that is, to use the NSF system only where needed
and simpler models, \eg (linearised) Euler equations, on the rest of
the computational domain.
 
This insight has lead to the development, of a certain type, of
so-called heterogeneous domain decomposition methods in which on a
certain part of the computational domain the NSF equations are solved
numerically, whereas the (linearised) Euler equations are used for far
field computations \cite[e.g.]{USDM06, CW01,Xu00,BHR11}.  In those
works the domain was decomposed a priori, \ie before the start of the
numerical computation.  The accuracy and efficiency of those schemes
depends sensitively on the placement of the domains.  Thus, the user
is required to have some physical intuition on where to put the domain
boundary.

It has been suggested that applying a more ``adaptive'' approach using
cutoff functions to the dissipative terms in the NSF equations might
lead to a modified model only containing second derivatives when a
certain threshold is exceeded \cite{BCR89,AP93}.  The disadvantage
with this approach is that error control for this type of model
adaptation is not available, and the main justification for its use is
that the modified equations converge to the original ones when the
cut-off parameter tends to zero.  A more rigorous model adaptation
approach for stationary linear advection-diffusion systems based on
optimal control techniques can be found in \cite{AGQ06} {and
  goal-oriented modelling error estimates for viscous and inviscid
  Burgers' equations were presented in \cite{CBB05}.}  Recently a
heterogeneous domain decomposition technique for linear model problems
based on factorisation was suggested in \cite{GHM16}.

Our approach to this issue differs from the previous ones, except
\cite{AGQ06} and \cite{CBB05} in that we aim at having an {\it a posteriori} criterion
which enables an automatic and adaptive choice of domains.  Our
approach is inspired by \cite{BE03} where a model adaptive algorithm
based on an a posteriori modeling error estimator is presented.  In
\cite{BE03} such an estimator was developed for hierarchies of
elliptic models based on dual weighted residuals.  This approach
easily extends to parabolic problems and was extended to viscous,
incompressible flows in \cite{BE04,BBRR07,SRO11,OBPB15}.
{It should be noted that  \cite{BE04,SRO11,PV14,OBPB15} combine model adaptation with mesh adaptation.
Earlier approaches to model adaptation based on a posteriori estimators or indicators are \cite{ASS99,SO99,OV00, VO01}.}

The situation which we consider differs from the one studied in
\cite{BE03} since the problems at hand are highly nonlinear and
convection dominated in nature.  In particular, it is known that error
estimators based on dual weighted residuals have certain theoretical
limitations for nonlinear systems of hyperbolic conservation laws, \eg
Euler's equations, since the dual problems may become ill-posed in
case the solution to the primal problem is discontinuous \cite{HH02}.

Our estimates are based on non-linear energy-like arguments using the
relative entropy framework.
{A similar methodology was used in \cite{Fis15} where rigorous a posteriori error estimates for the assumption of incompressibility in viscous flows were presented.} 
It is similar to our approach in that it does not rely on the solution of dual problems but uses the relative entropy stability framework. Our presentation will focus on
Runge--Kutta--discontinuous--Galerkin (RKDG) discretisations which are
an established tool for viscous as well as inviscid compressible fluid
flows. The schemes we study use discretisations of the NSF equations
on some part of the computational domain and discretisations of
Euler's equations on the rest of the domain.  Following
\cite{GMP_15,DG_15}, which were concerned with a posteriori estimators
for discretisation errors for hyperbolic conservation laws, we define
(explicitly computable) reconstructions of the numerical solution and
use the relative entropy stability framework in order to derive an
explicitly computable bound for the difference between the
reconstruction and the exact solution to the NSF equations.  Our error
estimator, in fact, consists of two parts. One part is related to the
modelling error while the other one is related to the numerical
discretisation error such that our estimator allows for simultaneous
model and mesh adaptation.  Model adaptation for hyperbolic
conservation laws and relaxation systems was addressed recently in
\cite{MCGS12}. In this work an error estimator for scalar problems,
based on Kruzhkovs $\leb{1}$ stability theory, was presented.
Additional relevant work is \cite{CCGMS13} where an error indicator
for general systems based on Chapman-Enskog expansions was derived.

{While the main motivation of our work stems from model adaptation between Euler and NSF equations and between isothermal Euler and isothermal Navier-Stokes equations,
we present our analysis on a more abstract level, which we will describe in the next paragraph. 
We do this to allow the treatment of many cases simultaneously and, as such, it constitutes the central part of our analysis.}

In the abstract framework we consider general models
that include dissipative effects of the form:
\begin{equation}\label{cx} \del_t \vec u +
  \sum_\alpha \partial_{x_\alpha} \vec f_\alpha (\vec u) =
  \sum_\alpha \partial_{x_\alpha}( \eps \vec g_\alpha(\vec u , \nabla
  \vec u)) \text{ on } \rT^d \times (0,T) \end{equation}
where $\vec u$ is the (unknown) vector of conserved quantities,
$\eps>0$ is a small parameter,
$\vec f_\alpha \in \cont{2}(U,\rR^{ n}),$ $\alpha=1,\dots,d$ are the
components of an advective flux and
$\vec g_\alpha \in \cont{2}(U \times \rR^{d \times n},\rR^{ n}),$
$\alpha=1,\dots,d$ are the components of a diffusive flux.  Moreover,
the so-called state space $U \subset \rR^n$ is an open set; $T>0$ is
some finite time and $\rT^d$ denotes the $d$-dimensional flat torus.

We are interested in the case of $\eps$ being very small, so that on
large parts of the computational domain the inviscid model
\begin{equation}\label{sim} \del_t \vec u + \sum_\alpha \partial_{x_\alpha} \vec f_\alpha (\vec u) = 0 \text{ on } \rT^d \times (0,T)\end{equation}
is a reasonable approximation of \eqref{cx}, for which, numerical
methods are computationally cheaper.

We will show later how the NSF equations fit into the framework
\eqref{cx} and that the Euler equations have the form \eqref{sim}.
Indeed, we will present our analysis first for a scalar model problem,
where we are able to derive fully computable a posteriori estimators.
After treating the scalar case we advance to pairs of models \eqref{sim} and
\eqref{cx}, which also exist in applications beyond compressible fluid flows,
for example traffic modelling.

We will assume throughout this work that \eqref{cx} and \eqref{sim}
are endowed with an (identical) convex entropy/entropy flux pair.
This assumption is true for the Euler and NSF model hierarchy, and
expresses that both models are compatible with the second law of
thermodynamics.  For the scalar model problem it is trivially
satisfied.

This entropy pair gives rise to a stability framework via the
so-called relative entropy, which we will exploit in this work.  The
relative entropy framework for the NSF equations has received a lot of
interest in recent years, \eg \cite{FJN12,LT13}.  It enables us to
 study modelling errors for inviscid approximations of
viscous, compressible flows in an a posteriori fashion.
In the case $n=1$ and general $d$, which
is the scalar case, we are able to prove rigorous a
posteriori estimates. All constants are explicitly computable and we
call such quantities \emph{estimators}. When $n>1$ a non-computable
constant appears in the argument which we were not able to
circumvent. The resultant a posteriori bounds are called
\emph{indicators} in the sequel.
Based on this a posteriori estimator/indicator we
construct a model adaptive algorithm which we have implemented for
model problems.

There are many applications, \eg aeroacoustics \cite{USDM06}, where it
would be desirable to consider the pair of models consisting of NSF
and the {\it linearised} Euler equations for computational efficiency.
We are currently unable to deal with this setting since our analysis
relies heavily on \eqref{cx} and \eqref{sim} having the same entropy
functional, while the linearised Euler system has a different entropy.

{A posteriori analysis is always based on the stability
  framework of the underlying problem.  For the class of problems
  considered here we make use of the relative entropy framework which
  requires that (at least) one of the solutions is Lipschitz
  continuous in space.  In principle, the model adaptation idea is
  independent of the numerical discretisations of both models, but for
  most approximation schemes for compressible fluid flows (finite
  volumes, discontinuous Galerkin) the discrete solution itself is not
  smooth enough to be used in the relative entropy stability analysis.
  Thus, our a posteriori argument requires a \emph{reconstruction}
  approach.  We will describe this reconstruction in detail for dG
  schemes.  Note that, in principle, our a posteriori analysis gives
  rise to a modelling error indicator in a wide variety of numerical
  schemes.  The bound relies heavily on a computable, Lipschitz
  continuous approximation of the exact solution, but it does not
  matter whether this results from a continuous finite element scheme
  of Taylor-Hood type, for example, or a reconstruction of a finite
  volume or dG solution.  }

The outline of this paper is as follows: In \S \ref{sec:mees} we
present the general framework of designing a posteriori estimators
based on the study of the abstract equations (\ref{cx})--(\ref{sim})
in the scalar case.  In \S \ref{sec:struct} we establish some structural properties of the INS and NSF systems regarding the compatibility of their diffusion terms with their relative entropies.
In 
\S \ref{sec:meesy} we describe how the relative entropy
framework can be extended to those generic pairs of systems of the form (\ref{cx})
and (\ref{sim}) that satisfy the structural properties that we established for the specific example of the NSF/Euler 
problem in \S \ref{sec:struct}.   In \S \ref{sec:recon} we
give an overview of the reconstruction approach presented in
\cite{GMP_15,DG_15}.  We also take the opportunity to extend the
operators to 2 spatial dimensions for use in numerical experiments
presented. Finally, we conclude the presentation with \S \ref{sec:num}
where we summarise extensive numerical experiments aimed at testing
the performance of the indicator as a basis for model adaptation.

\section{Estimating the modeling error for generic reconstructions - the scalar case}\label{sec:mees}
Our goal in this section is to show how the entropic structure can be used to obtain estimates for the difference between a (sufficiently regular) solution to \eqref{cx} and
 the solution $\vec v_h$ of a numerical scheme which is a discretisation of \eqref{sim} on  part of the (space-time) domain and 
of \eqref{cx} everywhere else.
We will pay particular attention to the {\it model adaptation error}, \ie the part of the error which is due to approximating not \eqref{cx} but \eqref{sim} on part of the domain.
In this section we present the arguments in the scalar case, as in this case all arguments can be given in a tangible way, see \S \ref{subs:sca}.

\subsection{Relative Entropy}
Before treating the scalar case we will review the classical concept of entropy/entropy flux pair which will be used in this Section and in \S \ref{sec:meesy}.
We will show how it can be employed in the relative entropy stability framework.
We will also show explicitly that the relative entropy in the NSF model satisfies the conditions which are necessary for our analysis.
Throughout this exposition we will use $d$ to denote the spatial dimension of the problem and $n$ as the number of equations in the system.

\begin{definition}[Entropy pair]
We call a tuple $(\eta,{\bf q}) \in \cont{1}(U,\rR) \times C^1(U,\rR^d)$ an entropy pair to \eqref{cx}
 provided the following compatibility conditions are satisfied:
\begin{equation}\label{cc1}
 \D \eta \D \vec f_\alpha = \D q_\alpha \quad \text{for } \alpha = 1,\dots, d \end{equation}
 and
 \begin{equation}\label{cc2}
  \partial_{x_\alpha} (\D \eta (\vec y)) : \vec g_\alpha(\vec y, \nabla \vec y) \geq 0 \quad \text{ for any } \vec y \in  \cont{1}(\rT^d,U) \text{ and } \alpha =1,\dots,d
 \end{equation}
where $\D$ denotes the Jacobian of functions defined on $U$ (the state space).
We call an entropy pair strictly convex if $\eta$ is strictly convex.
\end{definition}

\begin{remark}[Entropy equality]
 Note that every solution $\vec u \in \sobh{1}(\rT^d \times (0,T), U)$ of \eqref{cx} satisfies the additional companion balance law
 \[\partial_t \eta(\vec u) + \sum_\alpha \partial_{x_\alpha} \Big(  q_\alpha (\vec u) - \eps \vec g_\alpha(\vec u, \nabla \vec u) \D \eta(\vec u)\Big) =
 - \eps \sum_\alpha  \vec g_\alpha (\vec u, \nabla \vec u)\partial_{x_\alpha} \D \eta(\vec u).\]
 We refer to $\sum_\alpha  \vec g_\alpha (\vec u, \nabla \vec u)\partial_{x_\alpha} \D \eta(\vec u)$ as entropy dissipation.
\end{remark}

\begin{Rem}[Entropic structure]
We restrict our attention to systems \eqref{cx} which are endowed with at least one strictly convex entropy pair.
Such a convex entropy pair gives rise to a stability theory based on the relative entropy which we recall in Definition \ref{def:red}.
We will make the additional assumption that $\eta \in \cont{3}(U,\rR).$
While this last assumption is not standard in relative entropy estimates, it does not exclude any important cases.
\end{Rem}

\begin{remark}[Commutation property]
 Note that the existence of $q_\alpha$ means that for each $\alpha$ the vector field $\vec u \mapsto \D \eta(\vec u) \D \vec f_\alpha(\vec u)$ has  a potential and, thus,
 gives rise to the following commutation property:
 \begin{equation}\label{eq:dfdeta}
  \Transpose{(\D \vec f_\alpha)} \D^2 \eta = \D^2 \eta  \D \vec f_\alpha \quad \text{ for } \alpha=1,\dots, d.
 \end{equation}
\end{remark}

\begin{definition}[Relative entropy]\label{def:red}
 Let \eqref{cx} be endowed with an entropy pair $(\eta, \vec q).$ Then the relative entropy and relative entropy flux between the states
 $\vec u,\vec v  \in U$ are defined by
 \begin{equation}\label{red}
  \begin{split}
   \eta(\vec u|\vec v) &:= \eta(\vec u) - \eta(\vec v) - \D \eta(\vec v) (\vec u - \vec v)\\
   \vec q(\vec u|\vec v) &:= \vec q(\vec u) -  \vec q(\vec v) - \D \eta(\vec v) (\vec f(\vec u) - \vec f(\vec v)).
  \end{split}
 \end{equation}
\end{definition}


\begin{Hyp}[Existence of reconstruction]\label{hyp:recon}
 In the remainder of this section we will assume existence of a reconstruction $\hatv $ of a numerical solution $\vec v_h$  which weakly
solves
\begin{equation}\label{interm} \del_t \hatv + \div \vec f(\hatv) = \div \qp{ \widehat \eps \vec g(\hatv, \nabla \hatv)} + \cR_H + \cR_P\end{equation}
with explicitly computable residuals $\cR_H,\cR_P$ and $\widehat \eps: \rT^d \times [0,T] \rightarrow [0,\eps]$ being a function 
which is a consequence of the model adaptation procedure and determines in which part of the space-time domain which model is discretised.
We assume that the residual can be split into a part denoted $\cR_H \in \leb{2}(\rT^d \times (0,T), \rR^n)$ and a  part proportional 
to $\widehat \eps,$ denoted $\cR_P,$ which is an element of $ \leb{2}(0,T;\sobh{-1}(\rT^d, \rR^n)).$  

We also assume that an explicitly computable bound for $\norm{\hatv - \vec v_h}$ is available.
\end{Hyp}

\begin{Rem}[Model interpolation function]
 {Our theory is applicable for arbitrary non-negative functions $\widehat \eps \in L^\infty(\rT^d \times (0,T))$. 
In our numerical experiments, in \S \ref{sec:num}, we use a function $\widehat \eps$ only taking values in $\{0,\eps\}$ and which is piecewise constant within computational cells and time-steps
in order to avoid introducing cells in which we have an {\it intermediate model}. However, other choices are possible, and continuous choices for $\widehat \eps$ will be explored in a later work.}
\end{Rem}

\begin{Rem}[Reconstructions]
 We present some reconstructions in \S \ref{sec:recon} and point out that different choices of reconstructions will lead to different behaviours of the residuals
$\cR_H,\cR_P$ with respect to the mesh width $h.$ However, our main interest in the work at hand is a rigorous 
estimation of the modelling error, and not the derivation of optimal order discretisation error estimates,
which is a challenging task.
\end{Rem}

Throughout this paper we will make the following assumption on the exact solution.
\begin{Hyp}[Values in a compact set]\label{hyp:comp}
 We assume that we have {\it a priori} knowledge of a pair $(T,\mathfrak{O}),$
 where $T>0$ and $\mathfrak{O} \subset U$ is compact and convex, such that the exact solution $\vec u$ of \eqref{cx} takes values in 
 $\mathfrak{O}$ up to time $T,$ \ie 
\[\vec u({\vec x},t) \in \mathfrak{O} \quad \forall \ ({\vec x},t) \in \rT^d \times [0,T].\]
\end{Hyp}

\subsection{A posteriori analysis of the scalar case}\label{subs:sca}
In the scalar setting, \ie $n=1,$ we restrict ourselves to the ``complex'' model being given by 
\begin{equation}\label{vb}
 \del_t u + \div \qp{\vec f(u)} = \div (\eps \nabla u) 
\end{equation}
and the simple model by 
\begin{equation}\label{ivb}
 \del_t u + \div \qp{\vec f(u)} = 0.
\end{equation}
Therefore, our model adaptive algorithm can be viewed as a numerical discretisation of
\begin{equation}\label{intb}
 \del_t u + \div \qp{\vec f(u)} = \div (\widehat \eps \nabla u) 
\end{equation}
with a space and time dependent function $\widehat \eps$ taking values in $[0,\eps].$
The spatial distribution of $\widehat \eps$ determines which model is solved on which part of the domain.
The reconstruction $\shatv$ of the numerical solution is a Lipschitz continuous weak solution to the perturbed problem
\begin{equation}\label{vb_R}
 \del_t \shatv + \div \qp{\vec f(\shatv)} = \div (\widehat \eps \nabla \shatv)  + \cR_H + \cR_P,
\end{equation}
where $\widehat \eps$ is a space dependent function with values in $[0,\eps],$ $\cR_H$ is the ``hyperbolic'' part of the discretisation residual
which is in $\leb{2}(\rT^d \times (0,T),\rR^n)$, and 
$\cR_P$ is the ``parabolic'' part of the discretisation residual.
Note that $\cR_P$ is not in $\leb{2}(\rT^d \times (0,T),\rR^n)$ but in $\leb{2}(0,T;\sobh{-1}(\rT^d,\rR^n))$ and that $\Norm{\cR_P}_{\leb{2}(\sobh{-1})}$ is proportional to $\widehat \eps.$
See Hypothesis \ref{hyp:recon} for our general assumption and \eqref{eq:dres} for such a splitting in case of a specific reconstruction.

In what follows we will use the relative entropy stability framework to derive a bound for the difference between the solution $u$
of \eqref{vb} and the reconstruction $\shatv$ of the numerical solution.
In the scalar case every strictly convex  $\eta \in \cont{2}(U,\rR)$ is an entropy for \eqref{vb}, because to each such $\eta$ a consistent entropy flux may be defined by 
\begin{equation}
 q_\alpha (u):= \int^u \eta'(v) f_\alpha'(v) \d v \text{ for } \alpha =1,\dots, d
\end{equation}
and the compatibility with the diffusive term boils down to
\begin{equation}
 - (\partial_{x_\alpha } y ) \eta''(y)  (\partial_{x_\alpha } y ) \leq 0 \quad \text{ for all } y \in \cont{1}(U,\rR) \text{ and } \alpha =1,\dots,d,
\end{equation}
which is satisfied as a consequence of the convexity of $\eta.$

We choose 
$\eta(u)=\tfrac{1}{2} u^2$
as this simplifies the subsequent calculations.
In particular,
\[ \eta(u|v)= \frac{1}{2} ( u- v)^2\]
for all $u, v\in U.$

\begin{remark}[Stability framework]
Note that in the scalar case we might also use Kruzhkov's $\leb{1}$ stability framework \cite{Kru70} instead of the relative entropy.
However, the exposition at hand is supposed to serve as a blueprint for what we are going to do for systems in \S \ref{sec:meesy}.
\end{remark}

\begin{Rem}[Bounds on flux]
 Due to the regularity of $\vec f$  and the compactness of $\mathfrak{O}$ there exists a
  constant $0 < \Cfu < \infty$  such
  that
  \begin{equation}
    \label{eq:consts1d}
\norm{ \vec f''( u) } 
    \leq \Cfu \quad \forall u \in \mathfrak{O} ,    
  \end{equation}
  where $\norm{\cdot}$ is the Euclidean norm for vectors.
  Note that $\Cfu$ can be explicitly computed from $\mathfrak{O}$ and $\vec f.$
\end{Rem}
The main result of this Section is then the following Theorem:
\begin{The}[A posteriori modelling error control]\label{the:1}
 Let 
 $u \in \sobh{1}(\rT^d \times (0,T),\rR)$ be a solution to \eqref{vb} and let $\shatv \in\sob{1}{\infty}(\rT^d\times (0,T),\rR)$ solve \eqref{vb_R}. 
 Then, for almost all $t \in (0,T)$ the following bound for the difference between $u$ and $\shatv$ holds:
 \begin{multline}\label{eq:the1}
  \Norm{ u(\cdot,t)  - \shatv (\cdot,t) }_{\leb{2}(\rT^d)}^2 + \int_0^t \eps \norm{u(\cdot,s) - \shatv (\cdot,s)}_{\sobh{1}(\rT^d)}^2 \d s 
  \\ \leq
  \Big( \Norm{ u(\cdot,0)  - \shatv (\cdot,0) }_{\leb{2}(\rT^d)}^2 + \cE_M + \cE_D 
   \Big) 
  \exp\big( (\Norm{ \nabla \shatv}_{\leb{\infty}(\rT^d\times (0,t))} \Cfu + 1) t\big),
  \end{multline}
  with
  \begin{equation}
   \begin{split}
    \cE_M :=& \Norm{\qp{\eps - \widehat \eps}^{1/2} \nabla \shatv}_{\leb{2}(\rT^d \times (0,t))}^2,\\
    \cE_D :=& \Norm{\cR_H}_{\leb{2}(\rT^d \times (0,t))}^2  +\frac{1}{\eps} \Norm{\cR_P}_{\leb{2}( 0,t; \sobh{-1}(\rT^d) )}^2.
   \end{split}
  \end{equation}

\end{The}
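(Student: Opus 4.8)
Since the chosen entropy is quadratic, $\int_{\rT^d}\eta(u|\shatv)\,\d x=\tfrac12\Norm{u-\shatv}_{\leb{2}(\rT^d)}^2$, so the whole argument reduces to a Gr\"onwall estimate for the modulated energy $t\mapsto\tfrac12\Norm{u(\cdot,t)-\shatv(\cdot,t)}_{\leb{2}(\rT^d)}^2$. First I would check that this quantity is absolutely continuous: $u\in\sobh{1}$ gives $\div\vec f(u)\in\leb{2}(\rT^d)$ and $\Delta u=\div(\nabla u)\in\sobh{-1}(\rT^d)$, while $\shatv\in\sob{1}{\infty}$ gives $\partial_t\shatv,\nabla\shatv\in\leb{\infty}$; hence $u-\shatv\in\leb{2}(0,T;\sobh{1})$ with $\partial_t(u-\shatv)\in\leb{2}(0,T;\sobh{-1})$, and after the routine mollification needed to legitimise testing the weak forms of \eqref{vb} and \eqref{vb_R} against $u-\shatv$, for a.e.\ $t$ the derivative $\tfrac12\tfrac{\d}{\d t}\Norm{u-\shatv}_{\leb{2}(\rT^d)}^2$ equals the sum of a flux term $-\int_{\rT^d}(u-\shatv)\div(\vec f(u)-\vec f(\shatv))\,\d x$, a diffusion term $\int_{\rT^d}(u-\shatv)\big(\eps\Delta u-\div(\widehat\eps\nabla\shatv)\big)\,\d x$, and a residual term $-\int_{\rT^d}(u-\shatv)(\cR_H+\cR_P)\,\d x$.

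\textbf{The flux term.} This is the crux. A naive integration by parts produces terms cubic in $u-\shatv$ weighted by $\nabla u$, which are uncontrollable since $\nabla u$ is not bounded; the point is that these cubic terms assemble into the perfect spatial divergence $\div q(u|\shatv)$, with $q$ the relative entropy flux of Definition \ref{def:red}. Using $q_\alpha'=\eta' f_\alpha'$ and $\eta'(\shatv)=\shatv$ one verifies the pointwise identity $-(u-\shatv)\div(\vec f(u)-\vec f(\shatv))=-\div q(u|\shatv)-\sum_\alpha(\partial_{x_\alpha}\shatv)\,f_\alpha(u|\shatv)$, where $f_\alpha(u|\shatv):=f_\alpha(u)-f_\alpha(\shatv)-f_\alpha'(\shatv)(u-\shatv)$. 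Integrating over the torus kills the divergence, and Taylor's theorem with the bound $\norm{\vec f''}\le\Cfu$ on $\mathfrak{O}$ gives $|f_\alpha(u|\shatv)|\le\tfrac12\Cfu(u-\shatv)^2$, so the flux contribution is bounded by $\tfrac12\Cfu\Norm{\nabla\shatv}_{\leb{\infty}}\Norm{u-\shatv}_{\leb{2}(\rT^d)}^2$ --- exactly a Gr\"onwall term with the rate appearing in the exponential of \eqref{eq:the1}.

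\textbf{Diffusion and residuals.} Split $\eps\Delta u-\div(\widehat\eps\nabla\shatv)=\eps\Delta(u-\shatv)+\div((\eps-\widehat\eps)\nabla\shatv)$. Integration by parts turns the first piece into the good dissipation term $-\eps\Norm{\nabla(u-\shatv)}_{\leb{2}(\rT^d)}^2$; the second piece becomes $-\int_{\rT^d}\nabla(u-\shatv)\cdot(\eps-\widehat\eps)\nabla\shatv\,\d x$, and since $0\le\eps-\widehat\eps\le\eps$ Young's inequality (using $(\eps-\widehat\eps)^2\le\eps(\eps-\widehat\eps)$) bounds it by a small multiple of $\eps\Norm{\nabla(u-\shatv)}_{\leb{2}}^2$ plus a multiple of $\Norm{(\eps-\widehat\eps)^{1/2}\nabla\shatv}_{\leb{2}}^2$, i.e.\ the model-adaptation term $\cE_M$. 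The residual $\cR_H$ is paired in $\leb{2}$ and handled by Cauchy--Schwarz/Young into $\Norm{\cR_H}_{\leb{2}}^2$ plus $\tfrac12\Norm{u-\shatv}_{\leb{2}}^2$; the residual $\cR_P$ is paired via the $\sobh{-1}$--$\sobh{1}$ duality and Young into $\tfrac1\eps\Norm{\cR_P}_{\sobh{-1}}^2$ plus a small multiple of $\eps\Norm{u-\shatv}_{\sobh{1}}^2$. Choosing the Young parameters so that the $\eps$-weighted $\sobh{1}$ contributions from the $(\eps-\widehat\eps)$-term and from $\cR_P$ are strictly dominated by the dissipation $-\eps\Norm{\nabla(u-\shatv)}_{\leb{2}}^2$, a positive multiple of $\int_0^t\eps\Norm{u-\shatv}_{\sobh{1}}^2\,\d s$ survives on the left-hand side (its $\leb{2}$ part absorbed via $\eps\le 1$), while every remaining term is of the form (constant)$\times\Norm{u-\shatv}_{\leb{2}}^2$ or a data term feeding $\cE_M$, $\cE_D$. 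Integrating in time and applying Gr\"onwall's lemma yields \eqref{eq:the1}.

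\textbf{Expected main obstacle.} The only non-mechanical step is the flux term: one must recognise that the dangerous cubic-in-$(u-\shatv)$ contributions form a total divergence (equivalently, invoke the relative-entropy-flux identity), so that only the quadratic Taylor remainder remains and it is controlled by the \emph{reconstruction}'s Lipschitz constant $\Norm{\nabla\shatv}_{\leb{\infty}}$ rather than by the uncontrolled $\Norm{\nabla u}_{\leb{\infty}}$. The mollification justifying the test function and the Young-inequality bookkeeping that fixes the precise constants in $\cE_M$, $\cE_D$ and in the exponent are routine.
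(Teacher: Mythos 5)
Your proposal is correct and follows essentially the same route as the paper: test both equations with $u-\shatv$, use the relative-entropy-flux identity so that only the quadratic Taylor remainder weighted by $\nabla\shatv$ survives from the nonlinear flux, absorb the $(\eps-\widehat\eps)$-mismatch and the residuals via Young's inequality into $\cE_M$ and $\cE_D$, and conclude with Gr\"onwall. The only discrepancies are cosmetic (a harmless sign slip in your pointwise flux identity, which disappears once absolute values are taken, and your added care about mollification, which the paper glosses over).
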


\begin{remark}[Dependence of the estimator on $\eps.$]
We expect that the modelling residual part of the estimator in \eqref{eq:the1}, $\cE_M$,
becomes small in large parts of the computational domain in case $\eps$ is sufficiently small, even if $\widehat \eps$ vanishes everywhere.
This means that \eqref{sim} is a reasonable approximation of \eqref{cx}.
It should be
noted that  $\Norm{\cR_P}_{\leb{2}( 0,t; \sobh{-1}(\rT^d) )} \sim \cO(\eps)$ (as $\widehat \eps$ only takes the values $0$ and $\eps$),
therefore we expect $\frac{1}{\eps} \Norm{\cR_P}_{\leb{2}( 0,t; \sobh{-1}(\rT^d) )}^2 \sim \cO(\eps).$ 
\end{remark}

\begin{proof}[Proof of Theorem \ref{the:1}]
 Testing \eqref{vb} and \eqref{vb_R} with $(u - \shatv)$ and subtracting both equations we obtain
 \begin{multline}\label{eq:re1}
 \int_{\rT^d}\frac{1}{2} \pdt ((u- \shatv)^2)  - \nabla \shatv \big(\vec f(u) - \vec f(\shatv) - \vec f'(\shatv)(u - \shatv) \big) + \eps | \nabla (u - \shatv)|^2 \\
 =\int_{\rT^d} (\widehat \eps - \eps) \nabla \shatv \cdot \nabla (u - \shatv) + \cR_H (u - \shatv) + \cR_P (u - \shatv),
 \end{multline}
where we have used that $\rT^d$ has no boundary and 
\begin{equation}
 \div (\vec q(u|\shatv)) - \nabla \shatv \big(\vec f(u) - \vec f(\shatv) - \vec f'(\shatv)(u - \shatv) \big) = \div( \vec f(u) - \vec f(\shatv)) (u - \shatv).
\end{equation}
Applying Young's inequality in \eqref{eq:re1} we obtain
 \begin{multline}\label{eq:re2}
 \d_t \left(\frac{1}{2}  \Norm{u- \shatv}_{\leb{2}(\rT^d)}^2\right)   + \eps \norm{ (u - \shatv)}_{\sobh{1}(\rT^d)}^2 \\
 \leq  \Norm{\qp{\eps - \widehat \eps}^{1/2} \nabla \shatv}_{\leb{2}(\rT^d)}^2  + \frac{\eps}{4} \norm{u - \shatv}_{\sobh{1}(\rT^d)}^2 + \Norm{\cR_H}_{\leb{2}(\rT^d)}^2 + \Norm{u - \shatv}_{\leb{2}(\rT^d)}^2\\
 +\frac{1}{\eps} \Norm{\cR_P}_{\sobh{-1}(\rT^d)}^2 + \frac{\eps}{4}  \norm{u - \shatv}_{\sobh{1}(\rT^d)}^2 +  \norm{ \shatv}_{\sob{1}{\infty}(\rT^d)} \Cfu \Norm{u - \shatv}_{\leb{2}(\rT^d)}^2 .
 \end{multline}
 Several terms in \eqref{eq:re2} cancel each other and  we obtain
  \begin{multline}\label{eq:re3}
  \d_t \left(\frac{1}{2}  \Norm{u- \shatv}_{\leb{2}(\rT^d)}^2\right)   +\frac{ \eps}{2} \norm{ (u - \shatv)}_{\sobh{1}(\rT^d)}^2\\
 \leq \Norm{\qp{\eps - \widehat \eps}^{1/2} \nabla \shatv}_{\leb{2}(\rT^d)}^2  + \Norm{\cR_H}_{\leb{2}(\rT^d)}^2 \\
 +\frac{1}{\eps} \Norm{\cR_P}_{\sobh{-1}(\rT^d)}^2 +  (\norm{ \shatv}_{\sob{1}{\infty}(\rT^d)} \Cfu + 1) \Norm{u - \shatv}_{\leb{2}(\rT^d)}^2 .
 \end{multline}
 Integrating \eqref{eq:re3} in time implies the assertion of the theorem.
\end{proof}


\section{Some structural properties of isothermal Navier-Stokes and Navier-Stokes-Fourier equations}\label{sec:struct}
As pointed out before we are mainly interested in model adaptation between isothermal Navier-Stokes (INS) and isothermal Euler and between Navier-Stokes-Fourier (NSF) and Euler.
Still, we will derive the a posteriori error estimates on an abstract level. These abstract arguments rely, in particular, on some structural properties which are satisfied by the systems mentioned above.
In addition to existence of a convex entropy pair we need certain compatibility properties between the relative entropy and diffusive fluxes  $\vec g$ as well as the parabolic part of the residual $\cR_P$.
The purpose of this section is to state those properties explicitly and to verify that they are satisfied for INS and NSF.

Let us begin by introducing the INS and NSF systems. Note that in this Section  the notation differs from that in the rest of the paper so that the physical quantities 
 under consideration are denoted as is standard in the fluid mechanics literature.
 The isothermal Navier-Stokes system (INS), where we replace the Navier-Stokes stress by $\nabla \bv$ for simplicity,
reads:
  \begin{equation}\label{isoNS}
  \begin{split}
   \partial_t \rho + \div(\rho \bv) &=0\\
   \partial_t (\rho \bv) + \div(\rho \bv  \otimes \bv) + \nabla (p(\rho)) &= \div (\mu \nabla \bv) 
   \end{split}
  \end{equation}
  where $\rho$ denotes density, $\bv$ denotes velocity and $p=p(\rho)$ is the pressure, given by a constitutive relation as a monotone function of density,
and $\mu\geq0$ is the viscosity parameter.

 In this case the simple model are the isothermal Euler equations which are obtained from \eqref{isoNS} by setting $\mu=0.$
  For these models the vector of conserved quantities is $\vec u=\Transpose{(\rho, \rho \bv)}$ and the mathematical entropy is given by
  \[ \eta(\rho, \rho \bv) = W(\rho) + \frac{|\rho \bv|^2 }{2\rho},\]
where Helmholtz energy $W$ and pressure $p$ are related by the Gibbs-Duhem relation
\[ p'(\rho)= \rho W''(\rho).\]\medskip

The Navier-Stokes-Fourier (NSF) equations for an ideal gas, where we again replace the Navier-Stokes stress by $\nabla \bv$, differ from the INS equations by also containing a conservation law for the energy:

 \begin{equation}\label{NSF}
  \begin{split}
   \partial_t \rho + \div(\rho \bv) &=0\\
   \partial_t (\rho \bv) + \div(\rho \bv  \otimes \bv) + \nabla (p(\rho,\epsilon)) &= \div (\mu \nabla \bv) \\
    \partial_t e + \div((e+ p)\bv )  &= \div (\mu(\nabla \bv) \cdot \bv + \kappa \nabla T),
   \end{split}
  \end{equation}
  where we understand $(\nabla \bv) \cdot \bv$ by $((\nabla \bv) \cdot \bv)_\alpha  =\sum_\beta v_\beta \partial_{x_\alpha } v_\beta.$
  The variables and parameters which appear in \eqref{NSF} and did not appear before are the temperature $T,$ the specific inner energy $\epsilon,$
 and the heat conductivity $\kappa >0$.
  In an ideal gas it holds
  \begin{equation}
    \label{eq:ideal}
    \begin{split}
   e&= \rho \epsilon + \frac{1}{2} \rho |\bv|^2,\\
   p&=(\gamma - 1) \rho \epsilon =  \rho R T,
   \end{split}
  \end{equation}
  where $R$ is the specific gas constant and $\gamma$ is the adiabatic coefficient.
  For air it holds $R=287$ and $\gamma=1.4.$
In this case the simple model are the Euler equations which are obtained from \eqref{NSF} by setting $\mu,  \kappa =0.$
The vector of conserved quantities is $\vec u=\Transpose{(\rho, \rho \bv,e)}$ and the (mathematical) entropy is given by 
\[ \eta(\vec u)= - \rho \ln \left( \frac{p}{\rho^\gamma}\right).\]
We will impose in both cases that the state space $\mathfrak{O}$ enforces positive densities.

Let us now state the compatibility conditions:
 \begin{definition}[Relative entropy compatible]\label{Hyp:new}
 We call a system \eqref{cx} and a subspace $V \subset \rR^n$ \emph{relative entropy compatible} provided there exists a computable
 function $\cD :  U \times \rR^{n \times d} \times U \times \rR^{n \times d} \rightarrow [0,\infty)$ 
and a computable constant $k>0$ such that for all $\vec w , \widetilde{ \vec w} \in \sob{1}{\infty}(\rT^d)$ taking values in $\mathfrak{O}$ and all $\cR \in \sobh{-1}(\rT^d)$, taking values in $V$, the following holds
  \begin{multline}\label{ca1}
\sum_\alpha (  \vec g_\alpha(\vec w, \nabla \vec w) - \vec g_\alpha(\widetilde{ \vec w}, \nabla \widetilde{ \vec w}) ): \partial_{x_\alpha} (\D \eta(\vec w) - \D \eta(\widetilde {\vec w}))\\ \geq
\frac{1}{k} \cD ( \vec w, \nabla \vec w, \widetilde {\vec w}, \nabla \widetilde {\vec w}) - k (\norm{\vec w}_{\sob{1}{\infty}}^2 + \norm{\widetilde {\vec w}}_{\sob{1}{\infty}}^2) \eta(\vec w| \widetilde {\vec w})
 \end{multline}
 and 
  \begin{multline}\label{ca2}
 \norm{ \nabla(\D \eta(\vec w) - \D \eta(\widetilde {\vec w}) ) \cdot \vec g(\widetilde {\vec w}, \nabla \widetilde {\vec w}) }\\
 \leq
 k^2 (\norm{\vec w}_{\sob{1}{\infty}}^2 + \norm{\widetilde {\vec w}}_{\sob{1}{\infty}}^2 +1)  \eta(\vec w| \widetilde {\vec w})+ \frac{1}{2k} \cD(\vec w, \nabla \vec w, \widetilde{\vec w}, \nabla \widetilde{\vec w}) 
 + k^2 \sum_\alpha \vec g_\alpha (\widetilde{\vec w}, \nabla \widetilde{\vec w}) \partial_{x_\alpha} \D \eta(\widetilde {\vec w})
 \end{multline}
 and 
  \begin{multline}\label{rp:comp}
 \int_{\rT^d}  \cR (\D \eta (\vec w) - \D \eta(\widetilde {\vec w}) ) \\
  \leq  k \Norm{\cR}_{\sobh{-1}(\rT^d)} \left((\norm{\vec w}_{\sob{1}{\infty}} 
  + \norm{\widetilde {\vec w}}_{\sob{1}{\infty}} +1) \Big( \int_{\rT^d} \eta( \vec w | \widetilde {\vec w} ) \Big)^{\frac{1}{2}}
  + \Big(\int_{\rT^d}\cD ( \vec w, \nabla \vec w, \widetilde {\vec w}, \nabla \widetilde {\vec w}) \Big)^{\frac{1}{2}}\right).
 \end{multline}
 \end{definition}
 
 The reason for introducing the space $V$ in \eqref{rp:comp} is that  for any reasonable discretisation of the isothermal Navier-Stokes equations there is no parabolic residual
 in the mass conservation equation. The same is true for Navier-Stokes-Fourier.
 
 We will now verify the relative entropy compatibility for the INS system. 
 
 \begin{lemma}\label{rem:vhins}
  The isothermal Navier-Stokes equations \eqref{isoNS} and the space $V =\{0\} \times \rR^d$ are relative entropy compatible.
  \end{lemma}
\begin{proof}
 For the isothermal Navier-Stokes equations \eqref{isoNS} the diffusive fluxes are given by
  \begin{equation}
    \label{bdd1}
    \vec g_\alpha = \Transpose{(0, \partial_{x_\alpha} \bv)}
  \end{equation}
  so that in the inequalities in Definition \ref{Hyp:new} the derivative $ \tfrac{\partial \eta}{\partial \rho }$ does not enter.  
  We  compute
  \begin{equation}
    \label{bdd2}
    \frac{\partial \eta}{\partial (\rho \bv) } = \frac{\rho \bv}{\rho} = \bv.
  \end{equation}
  Thus, in this case entropy dissipation is given by
  \[ \mu \sum_\alpha \vec g_\alpha (\vec w, \nabla \vec w) \partial_{x_\alpha} \D \eta(\vec w)= \mu \norm{ \nabla \bv}^2, \]
  where $|\cdot|$ denotes the Frobenius norm and $\vec w= \Transpose{( \rho, \rho \bv)} .$
  With $\widetilde{\vec w} = \Transpose{(\widetilde \rho,\widetilde \rho \widetilde\bv)}$ we define
 \begin{equation}
   \cD ( \vec w, \nabla \vec w, \widetilde {\vec w}, \nabla \widetilde {\vec w})\\
   := \norm{ \nabla \bv - \nabla \widetilde \bv}^2 .
 \end{equation}
Let us now verify that there exists a constant $k$ such that the inequalities from Definition \ref{Hyp:new} are valid.
Making use of the identities \eqref{bdd1} and \eqref{bdd2} we obtain
 \begin{equation}\label{ca1t1}
 (  \vec g(\vec w, \nabla \vec w) - \vec g(\widetilde{ \vec w}, \nabla \widetilde{ \vec w}) ): \nabla (\D \eta(\vec w) - \D \eta(\widetilde {\vec w}))\\
= \left( \nabla \bv - \nabla \widetilde \bv \right) : (\nabla \bv - \nabla \widetilde \bv ) = \cD ( \vec w, \nabla \vec w, \widetilde {\vec w}, \nabla \widetilde {\vec w}),
\end{equation}
so that \eqref{ca1} is satisfied for any $k \geq 1.$
Now, again using \eqref{bdd1} and \eqref{bdd2}, we find for any $k \geq 1$
\begin{multline}\label{ca2t1}
 \norm{ \nabla(\D \eta(\vec w) - \D \eta(\widetilde {\vec w}) ) \cdot \vec g(\widetilde {\vec w}, \nabla \widetilde {\vec w}) }
 = \norm{ \nabla (\bv - \widetilde \bv ) : \nabla \widetilde \bv}\\
 \leq  \frac{1}{2k} \norm{ \nabla (\bv - \widetilde \bv )}^2 + k \norm{ \nabla \widetilde \bv}^2
 =
 \frac{1}{2k} \cD(\vec w, \nabla \vec w, \widetilde{\vec w}, \nabla \widetilde{\vec w}) 
 + k \sum_\alpha \vec g_\alpha (\widetilde{\vec w}, \nabla \widetilde{\vec w}) \partial_{x_\alpha} \D \eta(\widetilde {\vec w}),
 \end{multline}
 \ie \eqref{ca2} is satisfied for any $k \geq 1.$
 
 For any $\cR \in \sobh{-1}(\rT^d)$, with values in $\{0\}\times \rR^d$, we have
 \begin{multline}
  \int_{\rT^d} \cR (\D \eta (\vec w) - \D \eta(\widetilde {\vec w}) ) \leq \Norm{\cR}_{\sobh{-1}} \Norm{\bv - \widetilde \bv }_{\sobh{1}}\\
  \leq \Norm{\cR}_{\sobh{-1}} \qp{k \Norm{\vec w - \widetilde{\vec w} }_{\leb{2}} + \Big( \int_{\rT^d} \cD(\vec w, \nabla \vec w, \widetilde{\vec w}, \nabla \widetilde{\vec w})\Big)^{\frac 1 2}} 
 \end{multline}
which shows that \eqref{rp:comp} is satisfied with a constant $k$ depending only on $\mathfrak{O}$.
\end{proof}

Next, we show relative entropy compatibility for the NSF system. This system contains two parameters $\mu,\kappa$ scaling different dissipative mechanisms.
 We will identify $\mu$ with the small parameter in \eqref{cx} and keep the ratio $\tfrac{\kappa}{\mu}$, which we will treat as a constant, of order $1.$

\begin{lemma}\label{rem:vhnsf}
  The  Navier-Stokes-Fourier equations for an ideal gas \eqref{NSF} and the space $V =\{0\} \times \rR^d \times \rR$ are relative entropy compatible.
  \end{lemma}
\begin{proof}
 For NSF the diffusive flux is $\vec g_\alpha = \Transpose{(0, \partial_{x_\alpha} \bv, \bv \cdot \partial_{x_\alpha} \bv + \tfrac{\kappa}{\mu} \partial_{x_\alpha} T )}.$
 Therefore, the derivative $ \tfrac{\partial \eta}{\partial \rho }$ does not enter the inequalities in Definition \ref{Hyp:new} and  it is sufficient to 
  compute
   \begin{equation}\label{NSF1} \frac{\partial \eta}{\partial (\rho \bv) } = \frac{\gamma-1}{R} \frac{\bv}{T}; \quad  \frac{\partial \eta}{\partial e} =- \frac{\gamma-1}{R} \frac{1}{T}\end{equation}
   for understanding relations in Definition \ref{Hyp:new}. 
   From equation \eqref{NSF1}  we may compute entropy dissipation:
   \begin{equation}
    \mu \sum_\alpha \vec g_\alpha (\vec u, \nabla \vec u) \partial_{x_\alpha} \D \eta(\vec u) 
    = \frac{\gamma-1}{R} \frac{\mu}{T} \norm{\nabla \bv}^2 + 
    \kappa \frac{\gamma-1}{R} \frac{|\nabla T|^2}{T^2}
   \end{equation}
and we define
 \begin{equation}
   \cD ( \vec w, \nabla \vec w, \widetilde {\vec w}, \nabla \widetilde {\vec w})\\
   := \frac{1}{\widetilde T} \norm{ \nabla \bv - \nabla \widetilde \bv}^2 + \frac{\kappa}{\mu} \frac{\norm{\nabla T - \nabla \widetilde T}^2}{\widetilde T^2} .
 \end{equation}
 Note that  $\cD ( \vec w, \nabla \vec w, \widetilde {\vec w}, \nabla \widetilde {\vec w})$ is not symmetric in $\vec w$ and $\widetilde{\vec w}.$
Let us now determine a constant $k$ such that the conditions from Definition \ref{Hyp:new} are valid.
By inserting the definitions of $\vec g$ and $\eta$ into the left hand side of \eqref{ca1} we obtain
 \begin{multline}\label{ca1t2}
 (  \vec g(\vec w, \nabla \vec w) - \vec g(\widetilde{ \vec w}, \nabla \widetilde{ \vec w}) ): \nabla (\D \eta(\vec w) - \D \eta(\widetilde {\vec w}))\\
= \frac{\gamma-1}{R}\Big[ \nabla\qp{ \frac{\bv}{T} - \frac{\widetilde \bv}{T}} : \nabla (\bv - \widetilde \bv)
- \nabla\qp{ \frac{1}{T} - \frac{1}{\widetilde T}} ( \nabla \bv \cdot \bv - \nabla \widetilde \bv \cdot \widetilde \bv)\\
- \frac{\kappa}{\mu} \nabla\qp{ \frac{1}{T} - \frac{1}{\widetilde T}} \nabla (T - \widetilde T)
\Big].
  \end{multline}
   After a lengthy but straightforward computation we arrive at
   \begin{multline}\label{ca1t3}
 \frac{R}{\gamma-1}(  \vec g(\vec w, \nabla \vec w) - \vec g(\widetilde{ \vec w}, \nabla \widetilde{ \vec w}) ): \nabla (\D \eta(\vec w) - \D \eta(\widetilde {\vec w}))\\
= \frac{1}{\widetilde T} \norm{\nabla (\bv - \widetilde \bv)}^2 - \frac{\nabla \widetilde T}{\widetilde T^2} (\nabla  \bv - \nabla \widetilde \bv) (\bv - \widetilde \bv) 
\\
+ \qp{ \frac{1}{T} - \frac{1}{\widetilde T}} \nabla \bv : \nabla (\bv - \widetilde \bv) 
- \qp{ \frac{1}{T^2} - \frac{1}{\widetilde T^2}}\nabla T \nabla \widetilde \bv (\bv - \widetilde \bv) 
\\
+ \frac{\nabla (T - \widetilde T)}{\widetilde T^2} \nabla \widetilde \bv (\bv - \widetilde \bv) 
+ \frac{\kappa}{\mu} \frac{\norm{\nabla (T - \widetilde T)}^2}{\widetilde T^2}
+  \frac{\kappa}{\mu}  \qp{ \frac{1}{T^2} - \frac{1}{\widetilde T^2}}\nabla T \nabla (T - \widetilde T).
  \end{multline}
  Note that the two summands of $\cD$ both appear on the right hand side of \eqref{ca1t3}.
  Applying Young's inequality to the other terms on the right hand side of \eqref{ca1t3} shows that \eqref{ca1}
  is true for some $k$ only depending on $\mathfrak{O}.$
  
   By inserting the particular forms of $\vec g$ and $\eta$ for NSF into the left hand side of \eqref{ca2} we obtain
    \begin{multline}\label{ca2t2}
 \frac{R}{\gamma-1}\norm{ \nabla(\D \eta(\vec w) - \D \eta(\widetilde {\vec w}) ) \cdot \vec g(\widetilde {\vec w}, \nabla \widetilde {\vec w}) }\\
 \leq
 \nabla\qp{ \frac{\bv}{T} - \frac{\widetilde \bv}{T}} \nabla \widetilde \bv - \nabla\qp{ \frac{1}{T} - \frac{1}{\widetilde T}}  \nabla \widetilde \bv \cdot\widetilde  \bv 
 - \frac{\kappa}{\mu}\nabla\qp{ \frac{1}{T} - \frac{1}{\widetilde T}} \nabla \widetilde T.
 \end{multline}
 We may rewrite this as 
     \begin{multline}\label{ca2t3}
 \frac{R}{\gamma-1}\norm{ \nabla(\D \eta(\vec w) - \D \eta(\widetilde {\vec w}) ) \cdot \vec g(\widetilde {\vec w}, \nabla \widetilde {\vec w}) }\\
 \leq
 \frac{\nabla \bv - \nabla \widetilde \bv}{T} : \nabla \widetilde \bv - \frac{\nabla T}{T^2} \nabla \widetilde \bv (\bv - \widetilde \bv) + \qp{ \frac{1}{T} - \frac{1}{\widetilde T}} \nabla \widetilde \bv : \nabla \widetilde \bv
 \\
 + \frac{\kappa}{\mu} \frac{1}{\widetilde T^2} \nabla (T - \widetilde T) \cdot \nabla \widetilde T +  \frac{\kappa}{\mu}\qp{ \frac{1}{T^2} - \frac{1}{\widetilde T^2}} \nabla T \cdot \nabla \widetilde T.
 \end{multline}
 Using Young's inequality we may infer from \eqref{ca2t3}  that \eqref{ca2}
  is true for some $k$ only depending on $\mathfrak{O}.$
  
Finally, for any $\cR$ taking values in $ \{0\} \times \rR^{d+1}$ we have
 \begin{multline}
   \frac{R}{\gamma-1} \int_{\rT^d} \cR (\D \eta (\vec w) - \D \eta(\widetilde {\vec w}) ) \leq \Norm{\cR}_{\sobh{-1}}\qp{ \Norm{\frac{\bv}{T} - \frac{\widetilde \bv}{\widetilde T} }_{\sobh{1}}   
  + \Norm{\frac{1}{T} - \frac{1}{\widetilde T} }_{\sobh{1}} }\\
  \leq \Norm{\cR}_{\sobh{-1}} \qp{C ( \norm{\vec w}_{\sob{1}{\infty}} +
  \norm{\widetilde {\vec w}}_{\sob{1}{\infty}} +1) \Norm{\vec w - \widetilde{\vec w} }_{\leb{2}} + \Big( \int_{\rT^d}\cD(\vec w, \nabla \vec w, \widetilde{\vec w}, \nabla \widetilde{\vec w})\Big)^{\frac 1 2} } 
 \end{multline}
which shows that \eqref{rp:comp} is satisfied with a constant $k$ depending only on $\mathfrak{O}$.
\end{proof}



\section{Estimating the modeling error for generic reconstructions - the systems case}\label{sec:meesy}
In this section we set up a more abstract framework allowing for the analysis of systems of equations.
This framework relies of the relative entropy compatibility, Definition \ref{Hyp:new}.
 For clarity of exposition we do not explicitly track the constants, rather denote a generic constant $C$ which may depend on $\mathfrak{O},\vec f, \vec g, \eta$ but is independent of mesh size, solution
 {and $\veps$}.
 { Robustness of the estimates in $\veps$ is of particular importance since we are particularly interested in the case of small $\veps$.}

As in \S \ref{sec:mees} we assume Hypothesis \ref{hyp:comp} which gives rise to the following bounds:

\begin{Rem}[Bounds on flux and entropy]
 Due to the regularity of $\vec f$ and $\eta,$ and the compactness of $\mathfrak{O}$ there are
  constants $0 < \Cfu < \infty$ and $0< \Cetad < \Cetau < \infty$ such
  that
  \begin{equation}
    \label{eq:consts}
 \norm{\Transpose{\vec v} \D^2 \vec f(\vec u) \vec v} 
    \leq \Cfu \norm{\vec v}^2,    
    \qquad 
 \Cetad 
    \norm{\vec v}^2
    \leq 
    \Transpose{\vec v} 
    \D^2 \eta(\vec u)
    \vec v
    \leq \Cetau \norm{\vec v}^2
  \Foreach \vec v \in \reals^n, \vec u \in \mathfrak{O},
  \end{equation}
  and 
  \begin{equation}\label{eq:constt}
   \norm{ \D^3 \eta (\vec u)} \leq \Cetau \Foreach  \vec u \in \mathfrak{O},
  \end{equation}
  where $\norm{\cdot}$ is the Euclidean norm for vectors in \eqref{eq:consts} and a Frobenius norm for 3-tensors in \eqref{eq:constt}.
  Note that $\Cfu$, $\Cetad$ and $\Cetau$ can be explicitly computed from $\mathfrak{O}$, $\vec f$ and $\eta.$
\end{Rem}

Now we are in position to state the main result of this section
\begin{The}[A posteriori modelling error control]\label{the:2}
 Let 
 $\vec u \in \sob{1}{\infty}(\rT^d \times (0,T),\rR^n)$ be a weak solution to \eqref{cx}.
 Let $\hatv \in\sob{1}{\infty}(\rT^d\times (0,T),\rR^n)$ weakly solve \eqref{interm}, with $\cR_P$ taking values in some subspace $V \subset \rR^n$.
 Let \eqref{cx} be
 endowed with a strictly convex entropy pair, and let \eqref{cx} and $V$ be relative entropy compatible.
Let
  $\vec u$ and $\hatv$ only take values in $\mathfrak{O}.$
  Then, the following a posteriori error estimate holds:
 \begin{multline}\label{eq:the2}
  \Norm{ \vec u(\cdot,t)  - \hatv (\cdot,t) }_{\leb{2}(\rT^d)}^2 + \int_{\rT^d \times (0,t)} \frac{\eps}{4k}  \cD (\vec u,\nabla \vec u, \hatv , \nabla \hatv )  
 \\  
   \leq  C \left( \Norm{ \vec u(\cdot,0)  - \hatv (\cdot,0) }_{\leb{2}(\rT^d)}^2 + \cE_D + \cE_M \right)\\
  \times \exp\left( C ( \Norm{\vec u}_{\sob{1}{\infty}}^2 + \Norm{ \hatv}_{\sob{1}{\infty}}^2 +1) t \right)
  \end{multline}
  with $C,k$ being constants depending on $(\mathfrak{O}, \vec f, \vec g, \eta)$ and
\begin{equation}\label{def:ce}
\begin{split}
 \cE_M &:= \Norm{\widehat \eps \vec g(\hatv, \nabla \hatv)}_{\leb{2}(\rT^d \times (0,t))}^2  +\int_{\rT^d \times (0,t)} (\eps - \widehat \eps) k^2 \sum_\alpha \vec g_\alpha (\hatv, \nabla \hatv) \partial_{x_\alpha} \D \eta(\hatv) ,\\
 \cE_D &:= \frac{k^2}{\eps}  \Norm{\cR_P}_{\leb{2}(0,t;\sobh{-1}(\rT^d))}^2 
   + \Norm{\cR_H}_{\leb{2}(\rT^d \times (0,t))}^2 .
 \end{split}
\end{equation}

\end{The}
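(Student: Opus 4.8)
The plan is to run, at the level of systems, the relative entropy computation behind Theorem~\ref{the:1}, with the scalar quantity $\tfrac12(u-\shatv)^2$ replaced by $\eta(\vec u|\hatv)$ and with Hypothesis~\ref{Hyp:new} supplying the estimates that were a triviality (plain convexity of $\eta$) in the scalar case. First I would differentiate $t\mapsto\int_{\rT^d}\eta(\vec u|\hatv)$ via the identity
\[\partial_t\eta(\vec u|\hatv)=\big(\D\eta(\vec u)-\D\eta(\hatv)\big)\partial_t\vec u-(\vec u-\hatv)^\top\D^2\eta(\hatv)\,\partial_t\hatv,\]
substitute \eqref{cx} for $\partial_t\vec u$ and \eqref{interm} for $\partial_t\hatv$, integrate over the boundaryless torus and integrate by parts, so that --- just as in \eqref{eq:re1} --- the right-hand side splits into a convective, a diffusive and a residual contribution. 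Throughout I would pass freely between $\int_{\rT^d}\eta(\vec u|\hatv)$ and $\Norm{\vec u-\hatv}_{\leb{2}(\rT^d)}^2$ by means of the two-sided bound $\tfrac12\Cetad\Norm{\vec u-\hatv}_{\leb{2}(\rT^d)}^2\le\int_{\rT^d}\eta(\vec u|\hatv)\le\tfrac12\Cetau\Norm{\vec u-\hatv}_{\leb{2}(\rT^d)}^2$, a consequence of \eqref{eq:consts}.

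The convective contribution is treated exactly as in the classical relative entropy calculation for conservation laws: using the compatibility \eqref{cc1} and the commutation identity \eqref{eq:dfdeta}, the relative-entropy-flux divergences and the ensuing total derivatives integrate to zero on $\rT^d$, leaving $-\int_{\rT^d}\sum_\alpha(\partial_{x_\alpha}\hatv)^\top\D^2\eta(\hatv)\big(\vec f_\alpha(\vec u)-\vec f_\alpha(\hatv)-\D\vec f_\alpha(\hatv)(\vec u-\hatv)\big)$, which by Taylor's theorem and \eqref{eq:consts} is bounded by $C\Norm{\hatv}_{\sob{1}{\infty}(\rT^d)}\int_{\rT^d}\eta(\vec u|\hatv)$, a Gronwall term.

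The diffusive contribution is the heart of the matter. After integration by parts it equals $-\int_{\rT^d}\sum_\alpha\partial_{x_\alpha}\big(\D\eta(\vec u)-\D\eta(\hatv)\big):\big(\eps\vec g_\alpha(\vec u,\nabla\vec u)-\widehat\eps\vec g_\alpha(\hatv,\nabla\hatv)\big)$ plus the remainder $\int_{\rT^d}\sum_\alpha\partial_{x_\alpha}\vec r:\widehat\eps\vec g_\alpha(\hatv,\nabla\hatv)$ coming from the substitution $(\vec u-\hatv)^\top\D^2\eta(\hatv)=\D\eta(\vec u)-\D\eta(\hatv)-\vec r^\top$; since \eqref{eq:constt} gives $\Norm{\vec r}_{\sobh{1}(\rT^d)}\le C\big(1+\Norm{\vec u}_{\sob{1}{\infty}}+\Norm{\hatv}_{\sob{1}{\infty}}\big)\Norm{\vec u-\hatv}_{\leb{2}(\rT^d)}$, Young's inequality turns that remainder into $\tfrac14\Norm{\widehat\eps\vec g(\hatv,\nabla\hatv)}_{\leb{2}(\rT^d)}^2$ --- the first summand of $\cE_M$ --- plus a Gronwall term. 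Writing $\eps\vec g_\alpha(\vec u,\nabla\vec u)-\widehat\eps\vec g_\alpha(\hatv,\nabla\hatv)=\eps\big(\vec g_\alpha(\vec u,\nabla\vec u)-\vec g_\alpha(\hatv,\nabla\hatv)\big)+(\eps-\widehat\eps)\vec g_\alpha(\hatv,\nabla\hatv)$, the first part is handled by \eqref{ca1} applied with $\vec u$ in the first slot and $\hatv$ in the second: it yields the good term $-\tfrac{\eps}{k}\int_{\rT^d}\cD(\vec u,\nabla\vec u,\hatv,\nabla\hatv)$ together with a contribution $\le\eps k\big(\Norm{\vec u}_{\sob{1}{\infty}}^2+\Norm{\hatv}_{\sob{1}{\infty}}^2\big)\int_{\rT^d}\eta(\vec u|\hatv)$. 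The second part, estimated in absolute value by \eqref{ca2}, contributes $\tfrac{\eps-\widehat\eps}{2k}\int_{\rT^d}\cD$, a further Gronwall term, and precisely $\int_{\rT^d}(\eps-\widehat\eps)k^2\sum_\alpha\vec g_\alpha(\hatv,\nabla\hatv)\partial_{x_\alpha}\D\eta(\hatv)$ --- the second summand of $\cE_M$.

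It remains to treat the residuals and close. The hyperbolic residual gives $\int_{\rT^d}\cR_H\big(\D\eta(\vec u)-\D\eta(\hatv)\big)\le\Norm{\cR_H}_{\leb{2}(\rT^d)}^2+C\int_{\rT^d}\eta(\vec u|\hatv)$ by Young's inequality and the Lipschitz continuity of $\D\eta$ on $\mathfrak{O}$, while the parabolic residual contributes $-\int_{\rT^d}(\vec u-\hatv)^\top\D^2\eta(\hatv)\cR_P=\int_{\rT^d}\cR_P\big(\D\eta(\hatv)-\D\eta(\vec u)\big)+\int_{\rT^d}\vec r^\top\cR_P$; the first integral is bounded via \eqref{rp:comp} and then split by Young's inequality so that its $\sqrt{\cD}$-part is absorbed into the good term, producing the $\tfrac{k^2}{\eps}\Norm{\cR_P}_{\leb{2}(0,t;\sobh{-1}(\rT^d))}^2$ summand of $\cE_D$, whereas $\int_{\rT^d}\vec r^\top\cR_P\le\Norm{\cR_P}_{\sobh{-1}(\rT^d)}\Norm{\vec r}_{\sobh{1}(\rT^d)}$ is again Gronwall plus $\cR_P$. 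After these absorptions $\tfrac{\eps}{4k}\int_{\rT^d}\cD(\vec u,\nabla\vec u,\hatv,\nabla\hatv)$ survives on the left; integrating the resulting inequality over $(0,t)$, all contributions above assemble into $C\big(\Norm{\vec u(\cdot,0)-\hatv(\cdot,0)}_{\leb{2}(\rT^d)}^2+\cE_M+\cE_D\big)$ plus $C\int_0^t\big(\Norm{\vec u}_{\sob{1}{\infty}}^2+\Norm{\hatv}_{\sob{1}{\infty}}^2+1\big)\int_{\rT^d}\eta(\vec u|\hatv)\,\d s$, and Gronwall's lemma together with the two-sided bound gives \eqref{eq:the2}. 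The step I expect to be the main obstacle is the bookkeeping of the $\cD$-terms: the contributions picked up on the right from \eqref{ca2} and \eqref{rp:comp} must add up to strictly less than the $\tfrac{\eps}{k}\int\cD$ manufactured by \eqref{ca1}, which is what forces the Young weights and accounts for the factor $\tfrac14$ in \eqref{eq:the2}; one must also invoke \eqref{ca1}, \eqref{ca2} consistently with $\vec u$ in the first and $\hatv$ in the second argument (so that the $\cD$ produced matches the one on the left of \eqref{eq:the2}), the single place where \eqref{rp:comp} forces the opposite ordering being harmless since on the compact set $\mathfrak{O}$ the two orderings of $\cD$ are comparable (cf.\ Remark~\ref{rem:vhnsf}). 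Note, finally, that in contrast to \eqref{eq:the1} the exponent here involves the Lipschitz norm $\Norm{\vec u}_{\sob{1}{\infty}}$ of the \emph{exact} solution, entering through the $\Norm{\vec w}_{\sob{1}{\infty}}$ terms of Hypothesis~\ref{Hyp:new}; this is the non-computable constant that turns \eqref{eq:the2} into an indicator rather than a fully computable estimator when $n>1$.
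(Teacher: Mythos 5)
Your proposal is correct and follows essentially the same route as the paper: the identical relative-entropy identity (testing \eqref{cx} with $\D\eta(\vec u)-\D\eta(\hatv)$ and \eqref{interm} with $\D^2\eta(\hatv)(\vec u-\hatv)$), the same three-way split of the diffusive term into the \eqref{ca1}-coercive part, the $(\eps-\widehat\eps)\vec g(\hatv,\nabla\hatv)$ part handled by \eqref{ca2}, and the Taylor-remainder part yielding $\Norm{\widehat\eps\vec g(\hatv,\nabla\hatv)}_{\leb{2}}^2$, and the same splitting of the $\cR_P$ pairing via \eqref{rp:comp} plus the $\sobh{1}$ bound on $\D\eta(\vec u)-\D\eta(\hatv)-\D^2\eta(\hatv)(\vec u-\hatv)$ before Gronwall. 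The only deviation is your sign convention in the $\cR_P$ term, which makes you invoke \eqref{rp:comp} with the arguments swapped; the paper keeps the ordering $(\vec u,\hatv)$ throughout so this issue does not arise, but your comparability remark disposes of it anyway.
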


\begin{remark}[Energy dissipation degeneracy]
 It should be noted that the estimate in Theorem \ref{the:2} contains certain assumptions, \ie $\vec u\in\sob{1}{\infty}$, which are not verifiable in an a posteriori fashion. 
 In particular, we assume more regularity than can be expected for solutions of systems of the form \eqref{cx}; see \cite{FNS11,FJN12} for existence results for systems of this type.
 However, the weak solutions defined in those references are not unique and only weak-strong uniqueness results are available, cf. \cite{FJN12}.
 Thus, convergent a posteriori error estimators can only be expected in case the problem \eqref{cx} has a more regular solution than 
 is guaranteed analytically. Note that the corresponding term, \ie $\Norm{ \vec u}_{\sob{1}{\infty}}$
 does not appear in the scalar case and is a consequence of the dissipation only being present in momentum and energy conservation laws but not in the mass conservation equation.
 This leads to a form of degeneracy of the energy dissipation governing the underlying system.
 
 { In addition, the presence of $\Norm{\vec u}_{\sob{1}{\infty}}$ on the right hand side of \eqref{eq:the2} makes it impossible to compute this quantity a posteriori without knowledge of the exact solution.
 Practically, we expect  $\Norm{\vec u}_{\sob{1}{\infty}}$ to be smaller than $\Norm{\hatv}_{\sob{1}{\infty}}$ since $\hatv$ results from a system with less dissipation, but to the best of our knowledge there is no guarantee
 that this is the case.
 For practical computations we believe that $ \cE_D, \cE_M $ are good indicators for discretisation and modelling errors, respectively.
 }
\end{remark}

\begin{remark}[Structure of the estimator]
Note that the first factor in the {right hand side of \eqref{eq:the2}} consists of three parts. The first part is the error in the discretisation and reconstruction of the initial data.
The second part $\cE_D$ is due to the residuals caused by the discretisation error. The third part $\cE_M$ consists of residuals caused by the model approximation error.
\end{remark}

\begin{remark}[Structure of modelling error residual]
Recall that we are interested in the case of $\eps$ being (very) small. In this case the term 
\[ \int_{\rT^d \times (0,t)} (\eps - \widehat \eps) k^2 \sum_\alpha \vec g_\alpha (\hatv, \nabla \hatv) \partial_{x_\alpha} \D \eta(\hatv) \]
is the dominating part in the modelling error residual $\cE_M$ and, thus, letting $\widehat \eps$ be $\eps$ in larger parts of the domain will usually reduce the
modelling error residual.
\end{remark}

\begin{proof}[Proof of Theorem \ref{the:2}]
 We test \eqref{cx} by $\D \eta(\vec u) - \D \eta (\hatv)$ and \eqref{interm} by $\D^2 \eta(\vec u) (\vec u - \hatv)$ and subtract both equations.
 By rearranging terms and using \eqref{eq:dfdeta} we obtain
 \begin{equation}\label{remd1}
  \int_{\rT^d} \partial_t \eta(\vec u| \hatv) + \sum_\alpha \partial_{x_\alpha} \vec q_\alpha (\vec u| \hatv) + \eps( \vec g(\vec u, \nabla \vec u) - \vec g(\hatv,\nabla \hatv) ): \nabla (\D \eta(\vec u) - \D \eta(\hatv)) 
   = E_1 + E_2 + E_3,
    \end{equation}
   with
   \begin{equation}\label{remd2}
    \begin{split}
        E_1 &:= \int_{\rT^d}\widehat \eps \vec g(\hatv, \nabla \hatv) : \nabla (\D^2 \eta(\hatv) (\vec u - \hatv))
        -\eps \vec g(\hatv, \nabla \hatv) : \nabla (\D \eta(\vec u) - \D \eta(\hatv)) ,
   \\
  E_2 &:=-\int_{\rT^d} \sum_\alpha \partial_{x_\alpha}\hatv  \D^2 \eta(\hatv) (\vec f_\alpha(\vec u) - \vec f_\alpha (\hatv) - \D \vec f_\alpha (\hatv) (\vec u - \hatv)),\\
  E_3 &:= \int_{\rT^d} (\cR_H + \cR_P) \D^2 \eta(\hatv) (\vec u - \hatv).
    \end{split}
 \end{equation}
As $\rT^d$ does not have a boundary and because of \eqref{ca1} we obtain
 \begin{equation}\label{remd3}
  \int_{\rT^d} \partial_t \eta(\vec u| \hatv)  + \frac{\eps}{k} \cD (\vec u, \nabla \vec u,\hatv,\nabla \hatv) 
   \leq E_1 + E_2 + E_3 + \eps k (\norm{\vec u}_{\sob{1}{\infty}}^2 + \norm{\hatv}_{\sob{1}{\infty}}^2)\int_{\rT^d} \eta(\vec u| \hatv).
    \end{equation}
We are now going to derive estimates for the terms $E_1,E_2,E_3$ on the right hand side of \eqref{remd3}.
We may rewrite $E_1$ as
\begin{equation}\label{e1}
 E_1 = E_{11} + E_{12}
\end{equation}
with
\begin{multline}\label{e11}
 \abs{E_{11}} :=\norm{ \int_{\rT^d}\widehat \eps \vec g(\hatv, \nabla \hatv): \nabla (\D^2 \eta (\hatv) (\vec u - \hatv ) 
 - \D \eta(\vec u) + \D \eta (\hatv) ) }\\
 \leq \Norm{\widehat \eps \vec g(\hatv, \nabla \hatv) }_{\leb{2}}^ 2
  + \Cetau ( \Norm{\vec u}_{\sob{1}{\infty}}^2 + \Norm{\hatv}_{\sob{1}{\infty}}^2) \Norm{\vec u - \hatv}_{\leb{2}}^2,
\end{multline}
where we used \eqref{eq:constt} and 
\begin{equation}\label{e12a}
 E_{12} := 
  -\int_{\rT^d} (\eps - \widehat \eps) \vec g(\hatv , \nabla \hatv)  : \nabla ( \D \eta(\vec u) - \D \eta (\hatv)).
  \end{equation}
  Using \eqref{ca2} we find 
\begin{multline}\label{e12} 
| E_{12}|  \leq 
  \eps k^2 (\norm{\vec u}_{\sob{1}{\infty}}^2 + \norm{\hatv}_{\sob{1}{\infty}}^2 +1)  \eta(\vec u| \hatv)\\
  + \frac{\eps}{2k} \cD(\vec u, \nabla \vec u, \hatv, \nabla \hatv) 
 + (\eps - \widehat \eps) k^2 \sum_\alpha \vec g_\alpha (\hatv,\nabla \hatv) \partial_{x_\alpha} \D \eta(\hatv).
\end{multline}
Concerning $E_2$ we note
\begin{equation}\label{e2}
|E_2| \leq \Cetau \Cfu \norm{\hatv}_{\sob{1}{\infty}} \Norm{\vec u - \hatv}_{\leb{2}}^2.
\end{equation}
We decompose $E_3$ into two terms
\begin{equation}\label{e3}
 E_3 = \int_{\rT^d} \cR_H  \D^2 \eta(\hatv) (\vec u - \hatv) + \cR_P \D^2 \eta(\hatv) (\vec u - \hatv) =: E_{31} + E_{32}.
\end{equation}
We have
\begin{equation}\label{e31}
 |E_{31} |\leq \Norm{\cR_H}_{\leb{2}}^2+ \Cetau^2 \Norm{\vec u - \hatv}_{\leb{2}}^2.
\end{equation}
We rewrite $E_{32}$ as
\begin{equation}
 E_{32} = \int_{\rT^d}  \cR_P \left( - \D \eta (\vec u ) + \D \eta (\hatv) +  \D^2 \eta(\hatv) (\vec u - \hatv) \right) + \cR_P \left(  \D \eta (\vec u ) - \D \eta (\hatv)\right)
\end{equation}
such that we get the following estimate
\begin{multline}\label{e32b}
 |E_{32} |\leq \Norm{\cR_P}_{\sobh{-1}(\rT^d)} \Norm{\D \eta (\vec u ) - \D \eta (\hatv) -  \D^2 \eta(\hatv) (\vec u - \hatv) }_{\sobh{1}(\rT^d)} \\
 + k \Norm{\cR_P}_{\sobh{-1}(\rT^d)} \left((\norm{\vec u}_{\sob{1}{\infty}} + \norm{\hatv}_{\sob{1}{\infty}} +1) \Norm{ \vec u- \hatv }_{\leb{2}(\rT^d)} 
  + \Big( \int_{\rT^d}\cD ( \vec u, \nabla \vec u, \hatv, \nabla \hatv) \Big)^{\frac{1}{2}}\right)
\end{multline}
due to \eqref{rp:comp}. We have 
\begin{equation}\label{e32c}
 \Norm{\D \eta (\vec u ) - \D \eta (\hatv) -  \D^2 \eta(\hatv) (\vec u - \hatv) }_{\sobh{1}} \leq \Cetau ( \Norm{\vec u}_{\sob{1}{\infty}} + \Norm{\hatv}_{\sob{1}{\infty}}) \Norm{\vec u- \hatv}_{\leb{2}},
\end{equation}
such that \eqref{e32b} becomes
\begin{equation}\label{e32d}
 | E_{32}| \leq \frac{k^2}{\eps}  \Norm{\cR_P}_{\sobh{-1}}^2 +  2 \eps \Cetau^2 ( \Norm{\vec u}_{\sob{1}{\infty}}^2 + \Norm{\hatv}_{\sob{1}{\infty}}^2 + 1) \Norm{\vec u- \hatv}_{\leb{2}}^2
  + \frac{\eps}{4k} \int_{\rT^d}\cD ( \vec u, \nabla \vec u, \hatv, \nabla \hatv).
\end{equation}
Combining \eqref{e31} and \eqref{e32d} we obtain 
\begin{multline}\label{e3f}
 | E_3 | \leq \frac{k^2}{\eps}  \Norm{\cR_P}_{\sobh{-1}}^2 +  2 \Cetau^2 (\eps \Norm{\vec u}_{\sob{1}{\infty}}^2 + \eps \Norm{\hatv}_{\sob{1}{\infty}}^2 + 2) \Norm{\vec u- \hatv}_{\leb{2}}^2\\
 + \Norm{\cR_H}_{\leb{2}}^2 + \frac{\eps}{4k} \int_{\rT^d}\cD ( \vec u, \nabla \vec u, \hatv, \nabla \hatv).
\end{multline}
Upon inserting \eqref{e11}, \eqref{e12}, \eqref{e2} and \eqref{e3f} into \eqref{remd3} we obtain for $\eps <1$
 \begin{multline}\label{remd4}
  \int_{\rT^d} \partial_t \eta(\vec u| \hatv)  + \frac{\eps}{4k} \cD (\vec u, \nabla \vec u,\hatv,\nabla \hatv) \\
   \leq 
\Norm{\widehat \eps \vec g(\hatv, \nabla \hatv) }_{\leb{2}}^ 2
  + C (\norm{\vec u}_{\sob{1}{\infty}}^2 + \norm{\hatv}_{\sob{1}{\infty}}^2 +1) \int_{\rT^d}  \eta(\vec u| \hatv)\\
 + (\eps - \widehat \eps) k^2 \sum_\alpha \vec g_\alpha (\hatv, \nabla \hatv) \partial_{x_\alpha} \D \eta(\hatv)
 + \frac{k^2}{\eps}  \Norm{\cR_P}_{\sobh{-1}}^2
 + \Norm{\cR_H}_{\leb{2}}^2 
    \end{multline}
    where we have used that $\Norm{\vec u - \hatv}_{\leb{2}}^2$ is bounded  in terms of the relative entropy.
Using Gronwall's Lemma we obtain
\begin{multline}
  \Norm{ \vec u(\cdot,t)  - \hatv (\cdot,t) }_{\leb{2}(\rT^d)}^2 +
 \frac{\eps}{4k} \int_{\rT^d \times (0,t)} \cD (\vec u,\nabla \vec u, \hatv, \nabla \hatv) \d s \\
  \leq  \Cetau \left( \Norm{ \vec u(\cdot,0)  - \hatv (\cdot,0) }_{\leb{2}(\rT^d)}^2 +  \cE_D + \cE_M\right)\\
  \times \exp\left( C(\norm{\vec u}_{\sob{1}{\infty}}^2 +  \norm{\hatv}_{\sob{1}{\infty}}^2 +1) t \right)
\end{multline}
with $\cE_D, \cE_M$ defined in \eqref{def:ce}.
\end{proof}



\section{Reconstructions}\label{sec:recon}
In \S \ref{sec:mees} and \S \ref{sec:meesy} we have assumed existence of reconstructions of numerical solutions whose residuals are computable, see Hypothesis \ref{hyp:recon}. We have also assumed a certain regularity of these reconstructions. In this Section we will describe one way to obtain such reconstructions for semi-(spatially)-discrete dG schemes.

In previous works reconstructions for dG schemes have been mainly used for deriving a posteriori bounds of {\it discretisation errors} \cite[c.f.]{DG_15,GMP_15,GeorgoulisHallMakridakis:2014} for hyperbolic problems.
In these works the main idea is to compare the numerical solution $\vec v_h$ and the exact solution $\vec u$ not directly,
but to introduce an intermediate quantity, the reconstruction $\hatv$ of the numerical solution.
This reconstruction must have two crucial properties:
\begin{itemize}
\item Explicit a posteriori bounds for the difference $\Norm{\hatv -\vec v_h}_\cX$ for some appropriate $\cX$ need to be 
  available and,
\item The reconstruction $\hatv$ needs to be globally smooth enough to apply the appropriate stability theory of the underlying PDE.
\end{itemize}
These two properties allow the derivation of an a posteriori bound for the difference $\Norm{\vec u - \vec v_h}_\cX.$

In the sequel we will provide a methodology for the explicit computation of $\hatv$ \emph{only} from the numerical solution $\vec v_h$. This means trivially that the difference $\Norm{\hatv - \vec v_h}_\cX$ can be controlled explicitly.

From \S \ref{sec:mees} and \S \ref{sec:meesy} the stability theory we advocate is that of relative entropy and we have extended the classical approach 
such that not only \emph{discretisation} but also \emph{modelling errors} are accounted for.
Note also that for our results from \S \ref{sec:mees} and \S \ref{sec:meesy} to be applicable we require $\hatv \in \sob{1}{\infty}(\rT^d \times [0,T]).$

In this Section we describe how to obtain reconstructions $\hatv$ of numerical solutions $\vec v_h$ 
which are obtained by solving \eqref{cx} on part of the space-time domain and \eqref{sim} on the rest of the space-time domain.
For brevity we will focus on numerical solutions obtained by semi-(spatially)-discrete dG schemes,
which are a frequently used tool for the numerical simulation of models of the forms \eqref{cx} and \eqref{sim} alike.
We will view $\vec v_h$ as a discretisation of the ``intermediate'' problem 
\begin{equation}
  \label{interm-c}
  \del_t \vec v + \div \vec f(\vec v) = \div \qp{ \widehat \eps \vec g(\vec v, \nabla \vec v)}\end{equation}
where $\widehat \eps$ is the model adaptation function, which will be chosen as part of the numerical method.

\begin{remark}[Alternative types of reconstruction]
If \eqref{cx} was a parabolic problem, this would be a quite strong argument in favour of using elliptic reconstruction, see \cite{MN03},
but this would make the residuals scale with $\frac{1}{\eps}.$ Recall that we are interested in the case of $\eps$ being small.
As important examples, \eg the  Navier-Stokes-Fourier equations, are not parabolic we will describe a reconstruction approach here
which was developed for semi-discrete dG schemes for hyperbolic problems in one space dimension in \cite{GMP_15}.
An extension to fully discrete methods can be found in \cite{DG_15}.
\end{remark}

Note that we state reconstructions in this paper to keep it self contained and to describe how we proceed in our numerical experiments in \S \ref{sec:num}.
It is, however, beyond the scope of this work to derive optimal reconstructions for \eqref{cx}, \eqref{sim}.
For all of these problems the derivation of optimal reconstructions of the numerical solution is a problem in its own right.
Note that in this framework {\it optimality} of a reconstruction  means that the error estimator, which is obtained
based on this reconstruction, is of the same order as the (true) error of the numerical
scheme.

We will first outline the reconstructions for \eqref{sim} in one space dimension, proposed in \cite{GMP_15}, and investigate in which sense they lead to reconstructions of 
numerical solutions to
\eqref{cx} or \eqref{interm-c}.
Afterwards we describe how the reconstruction approach can be extended to dG methods on Cartesian meshes in two space dimensions.
We choose Cartesian meshes because they lend themselves to an extension of the approach from \cite{GMP_15}.
We are not able to show the optimality of $\cR_H$ in this case, though.
Finding suitable (optimal) reconstructions for non-linear hyperbolic systems on unstructured meshes is the topic of ongoing research.

\subsection{A reconstruction approach for dG approximations of hyperbolic conservation laws}\label{subs:rec:claw}
In this section we recall a reconstruction approach for semi-(spatially)-discrete dG schemes for systems of hyperbolic conservation laws \eqref{sim}
complemented with initial data $\vec u(\cdot,0) = \vec u_0 \in L^\infty(\rT).$
We consider the one dimensional case. An extension to fully discrete schemes can be found in \cite{DG_15}. Let $\cT$ be a set of open intervals such that
\begin{equation}
 \bigcup_{S \in \cT} \bar S = \rT \text{ (the 1d torus)}, \text{ and } \text{ for all } S_1,S_2 \in \cT \text{ it holds } S_1=S_2 \text{ or }  S_1 \cap  S_2 = \emptyset.
\end{equation}
By $\cE$ we denote the set of interval boundaries.

The space of piecewise polynomial functions of degree $q \in \rN$ is defined by
\begin{equation}
 \fes_q := \{ \vec w : \rT \rightarrow \rR^n \, : \, \vec w|_S \in \rP_q(S,\rR^n) \ \forall \, S \in \cT\},
\end{equation}
where $ \rP_q(S,\rR^n)$ denotes the space of polynomials of degree $\leq q$ on $S$ with values in $\rR^n.$

For defining our scheme we also need jump and average operators which require the definition of a broken Sobolev space:
\begin{definition}[Broken Sobolev space]
 The broken Sobolev space $\sobh{1}(\cT,\rR^n)$ is defined by
 \begin{equation}
 \sobh{1}(\cT,\rR^n) := \{ \vec w : \rT \rightarrow \rR^n \, : \, \vec w|_S \in \sobh{1}(S,\rR^n) \, \forall \, S \in \cT\}.
\end{equation}
\end{definition}

\begin{definition}[Traces, jumps and averages]
 For any $\vec w \in \sobh{1}(\cT,\rR^n)$ we define 
 \begin{itemize}
  \item $\vec w^\pm : \cE \rightarrow \rR^n $ by $ \vec w^\pm (\cdot):= \lim_{ s \searrow 0} \vec w(\cdot \pm s) $ ,
  \item $\avg{\vec w} : \cE \rightarrow \rR^n $ by $ \avg{\vec w} = \frac{\vec w^- + \vec w^+}{2},$
   \item $\jump{\vec w} : \cE \rightarrow \rR^n $ by $ \jump{\vec w} = \vec w^- - \vec w^+.$
 \end{itemize}

\end{definition}

Now we are in position to state the numerical schemes under consideration:
\begin{definition}[Numerical scheme for \eqref{sim}]
  The numerical scheme is to seek $\vec v_h \in \cont{1}((0,T), \fes_q)$ such that:
\begin{equation}\label{eq:sddg}
\begin{split}
 &\vec v_h(0)= \cP_q [\vec u_0] \\
 &\int_{\cT} \partial_t \vec v_h \cdot \vec \phi - \vec f(\vec v_h) \cdot \partial_x \vec \phi + \int_{\cE} \vec F (\vec v_h^-,\vec v_h^+) \jump{\vec \phi} =0 \quad \text{for all } \vec \phi \in \fes_q,
 \end{split}
\end{equation}
where $\int_{\cT}$ is an abbreviation for $\sum_{S \in \cT} \int_S$, $\cP_q$ denotes $\leb{2}$-orthogonal projection into $\fes_q,$
and $\vec F: U \times U \rightarrow \mathbb{R}^n$ is a numerical flux function.
We impose that the numerical flux function satisfies the following condition:
There exist $L>0$ and $\vec w : U \times U \rightarrow U$ such that
\begin{equation}
 \label{eq:repf}
 \vec F (\vec u, \vec v) = \vec f(\vec w(\vec u, \vec v)) \quad \text{ for all } \vec u, \vec v \in U,
\end{equation}
and
\begin{equation}
 \label{eq:condw}
 \norm{\vec w(\vec u, \vec v) - \vec u} +  \norm{\vec w(\vec u, \vec v) - \vec v} \leq  L \norm{\vec u - \vec v} \quad \text{ for all } \vec u, \vec v \in \mathfrak{O}.
\end{equation}
\end{definition}

\begin{remark}[Conditions on the flux]
Note that conditions \eqref{eq:repf} and \eqref{eq:condw} imply the consistency and local Lipschitz continuity conditions usually imposed on numerical fluxes in the convergence 
analysis of dG approximations of hyperbolic conservation laws.
The conditions do not make the flux monotone nor do they ensure stability of \eqref{eq:sddg}. 
They do, however, ensure that the right hand side of
\eqref{eq:sddg} is Lipschitz continuous and, therefore, \eqref{eq:sddg} has unique solutions for small times.
Obviously, practical interest is restricted to numerical fluxes leading to reasonable stability properties of \eqref{eq:sddg} at least as long as the exact solution 
to \eqref{sim} is Lipschitz continuous.
Fluxes of Richtmyer and Lax-Wendroff type lead to stable numerical schemes (as long as the exact solution is smooth) and 
satisfy  a relaxed  version of conditions \eqref{eq:repf} and \eqref{eq:condw}.
It was shown in \cite{DG_15} that these relaxed conditions (see \cite[Rem. 3.6]{DG_15}) are sufficient for obtaining optimal a posteriori error estimates.
\end{remark}

Let us now return to the main purpose of this section: the definition of a reconstruction operator.
In addition, we present a reconstruction of the numerical flux which will be used for splitting the residual into a parabolic and a hyperbolic part, in \S \ref{subs:rec:hp}.
They are based on information from the numerical scheme:
\begin{definition}[Reconstructions]
  For each $t \in [0,T]$ we define the flux reconstruction $\hatf(\cdot,t) \in \fes_{q+1}$ through
 \begin{equation}\label{eq:rf1}
  \begin{split}
   \int_{\cT} \partial_x \hatf(\cdot,t) \cdot \vec \phi &= -\int_{\cT} \vec f(\vec v_h(\cdot,t)) \cdot \partial_x \vec \phi + \int_{\cE} \vec F (\vec v_h^-(\cdot,t),\vec v_h^+(\cdot,t)) \jump{\vec \phi}  \quad \text{for all } \vec \phi \in \fes_q\\
   \hatf^+(\cdot,t) &= \vec F (\vec v_h^-(\cdot,t),\vec v_h^+(\cdot,t)) \quad \text{ on } \cE.
  \end{split}
\end{equation}
For each $t \in [0,T]$ we define the reconstruction $\hatv(\cdot,t) \in \fes_{q+1}$ through
 \begin{equation}\label{eq:rf2}
  \begin{split}
   \int_{\rT} \hatv(\cdot,t) \cdot \vec \psi &=  \int_{\rT} \vec v_h (\cdot,t) \cdot \vec \psi  \quad \text{for all } \vec \psi \in \fes_{q-1}\\
   \hatv^\pm(\cdot,t) &= \vec w (\vec v_h^-(\cdot,t),\vec v_h^+(\cdot,t))  \quad \text{ on } \cE.
  \end{split}
\end{equation}
\end{definition}

\begin{remark}[Properties of reconstruction]
 It was shown in \cite{GMP_15} that these reconstructions are well-defined, explicitly and locally computable and Lipschitz continuous in space.
 Due to the Lipschitz continuity of $\vec w$ they are also Lipschitz continuous in time.
 Recall from \S \ref{sec:mees} and \S \ref{sec:meesy} that the Lipschitz continuity of $\hatv$ in space was crucial for our arguments.
\end{remark}

Due to the Lipschitz continuity of $\hatv$ the definition of the discretisation residual satisfies
\begin{equation}\label{eq:hypres}
 \cR := \partial_t \hatv + \partial_x \vec f(\hatv) \in \leb{\infty}.
\end{equation}
At this point the reader might ask why we have defined $\hatf$ as it is not present in \eqref{eq:hypres}
and  is not needed for computing the residual $\cR$  either.
We will use $\hatf$ in \S \ref{subs:rec:hp} to split the residual into a parabolic and a hyperbolic part.
As a preparation to this end let us note that upon combing 
\eqref{eq:sddg} and \eqref{eq:rf1} we obtain
\[ \partial_t \vec v_h + \partial_x \hatf =0\]
pointwise almost everywhere. Thus, we may split the residual as follows:
\[ \cR =  \partial_t ( \hatv  - \vec v_h) + \partial_x (\vec f(\hatv) - \hatf).\]
This splitting was used in \cite{GMP_15} to argue that the residual is of optimal order.

\subsection{A reconstruction approach for dG approximations of hyperbolic/parabolic problems}\label{subs:rec:hp}
We will describe in this Section how the reconstruction methodology described above can be used in case of dG semi- (spatial) discretisations
of \eqref{interm-c} in one space dimension following the local dG methodology.

\begin{definition}[Discrete gradients]
 By $\dgm, \dgp : \sobh{1}(\cT,\rR^m) \rightarrow \fes_q$ we denote discrete gradient operators defined through
\begin{equation}
 \int_{\rT} \dgpm \vec y \cdot \vec \phi = - \int_{\cT}  \vec y \cdot \partial_x \vec \phi + \int_{\cE} \vec y^\pm \jump{\vec \phi} \quad \text{ for all } \vec y, \vec \phi \in \fes_q.
\end{equation}
\end{definition}
\begin{lemma}[Discrete integration by parts]\label{lem:dibp}
The operators $\dgpm$ satisfy the following duality property:
For any $\vec \phi, \vec \psi \in \fes_q$ it holds
\[ \int_{\rT} \vec \phi \dgm \vec \psi = - \int_{\rT} \vec \psi \dgp \vec \phi .\]
\end{lemma}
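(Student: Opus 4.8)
The plan is to verify the duality property $\int_{\rT} \vec \phi \dgm \vec \psi = - \int_{\rT} \vec \psi \dgp \vec \phi$ by applying the defining relation of the discrete gradient operators twice, once for $\dgm$ and once for $\dgp$, and then comparing the resulting volume and edge contributions. Since $\vec\phi, \vec\psi \in \fes_q$ and the operators $\dgpm$ map into $\fes_q$, we may legitimately use $\vec\phi$ and $\vec\psi$ as test functions in the respective definitions.

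First I would apply the definition of $\dgm$ with $\vec y = \vec\psi$ and test function $\vec\phi$, obtaining
\[ \int_{\rT} \dgm \vec\psi \cdot \vec\phi = -\int_{\cT} \vec\psi \cdot \partial_x \vec\phi + \int_{\cE} \vec\psi^- \jump{\vec\phi}. \]
Next I would apply the definition of $\dgp$ with $\vec y = \vec\phi$ and test function $\vec\psi$, obtaining
\[ \int_{\rT} \dgp \vec\phi \cdot \vec\psi = -\int_{\cT} \vec\phi \cdot \partial_x \vec\psi + \int_{\cE} \vec\phi^+ \jump{\vec\psi}. \]
Adding these two identities, the claim $\int_{\rT}\vec\phi\dgm\vec\psi + \int_{\rT}\vec\psi\dgp\vec\phi = 0$ reduces to showing
\[ -\int_{\cT} \big(\vec\psi \cdot \partial_x \vec\phi + \vec\phi \cdot \partial_x \vec\psi\big) + \int_{\cE} \big(\vec\psi^- \jump{\vec\phi} + \vec\phi^+ \jump{\vec\psi}\big) = 0. \]

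The volume term is handled by elementwise integration by parts: on each $S \in \cT$ we have $\int_S (\vec\psi\cdot\partial_x\vec\phi + \vec\phi\cdot\partial_x\vec\psi) = \int_S \partial_x(\vec\phi\cdot\vec\psi) = [\vec\phi\cdot\vec\psi]_{\partial S}$, which assembled over all elements produces a sum over $\cE$ of boundary evaluations. The remaining work is the bookkeeping step: rewrite the edge contributions using the trace conventions $\vec w^\pm$ and the jump $\jump{\vec w} = \vec w^- - \vec w^+$, and check that at each interface the volume boundary terms exactly cancel the edge terms $\vec\psi^-\jump{\vec\phi} + \vec\phi^+\jump{\vec\psi}$. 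The identity to verify pointwise at each node is of the form $\vec\phi^-\vec\psi^- - \vec\phi^+\vec\psi^+ = \vec\psi^-(\vec\phi^- - \vec\phi^+) + \vec\phi^+(\vec\psi^- - \vec\psi^+)$, which is an elementary algebraic identity. I expect the main (minor) obstacle to be nothing more than keeping track of orientation and the $\pm$ side conventions at interfaces so that the telescoping of the volume boundary terms matches the chosen trace in the definitions of $\dgm$ (which uses $\vec y^-$) and $\dgp$ (which uses $\vec y^+$); since $\rT$ is the torus there are no physical boundary terms to worry about, so everything cancels cleanly.
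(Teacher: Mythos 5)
Your proposal is correct and complete; note that the paper itself does not prove this lemma but simply defers to \cite{DE12}, and your argument is the standard one. Testing the definition of $\dgm \vec\psi$ with $\vec\phi$ and that of $\dgp \vec\phi$ with $\vec\psi$ (both legitimate since $\vec\phi,\vec\psi\in\fes_q$), summing, integrating by parts elementwise, and invoking the pointwise identity $\vec\phi^-\cdot\vec\psi^- - \vec\phi^+\cdot\vec\psi^+ = \vec\psi^-\cdot\jump{\vec\phi} + \vec\phi^+\cdot\jump{\vec\psi}$ at each node indeed makes all terms cancel on the torus; the essential point, which you correctly isolate, is that the one-sided trace $\vec y^-$ in the edge term defining $\dgm$ is complementary to the trace $\vec y^+$ defining $\dgp$, which is exactly what lets the element-boundary contributions telescope against the jump terms.
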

The proof of Lemma \ref{lem:dibp} can be found in \cite{DE12}. 
Rewriting \eqref{interm-c} as 
\begin{equation}
 \begin{split}
  \vec s &= \partial_x \vec v\\
  \del_t \vec v + \partial_{x} \vec f (\vec v) &=   \partial_{x} (\widehat \eps \vec g(\vec v , \vec s)) 
 \end{split}
\end{equation}
 motivates the following semi-discrete numerical scheme:
\begin{definition}[Numerical scheme]
 The numerical solution $(\vec v_h, \vec s_h ) \in \qb{\cont{1}((0,T), \fes_q)}^2$
 is given as the solution to 
 \begin{equation}\label{eq:sddg2}
\begin{split}
 \vec v_h(0)&= \cP_q [\vec u_0] \\
  \vec s_h &= \dgm \vec v_h\\
 \int_{\cT} \partial_t \vec v_h \cdot \vec \phi - \vec f(\vec v_h) \cdot \partial_x \vec \phi + \widehat \eps \vec g(\vec v_h, \vec s_h) \dgm \vec \phi + \int_{\cE} \vec F (\vec v_h^-,\vec v_h^+) \jump{\vec \phi} 
  &=0  \quad \text{for all } \vec \phi \in \fes_q.
 \end{split}
\end{equation}
\end{definition}

Defining $\hatf$ as in \eqref{eq:rf1} allows us to rewrite \eqref{eq:sddg2}$_3$, using Lemma \ref{lem:dibp}, as
\begin{equation}\label{nssf}
 \partial_t \vec v_h + \partial_x \hatf - \dgp \cP_q[ \widehat \eps \vec g(\vec v_h, \dgm \vec v_h)]=0.
\end{equation}

Due to the arguments given in \S \ref{subs:rec:claw} the reconstruction $\hatv$ is an element of $ \sob{1}{\infty}(\rT \times (0,T), \rR^n)$ such that the following 
residual makes sense in $\leb{2}(0,T;\sobh{-1}(\rT)):$
\begin{equation}
 \cR := \partial_t \hatv + \partial_x \vec f(\hatv) - \partial_x \qp{ \widehat \eps \vec g(\hatv, \partial_x \hatv)}.
\end{equation}
Using \eqref{nssf} we may rewrite the residual as
\begin{equation}\label{eq:dres}
 \cR = \underbrace{\partial_t (\hatv - \vec v_h) + \partial_x (\vec f(\hatv)- \hatf)}_{=: \cR_H} + \underbrace{\dgp \cP_q[ \widehat \eps \vec g(\vec v_h, \dgm \vec v_h)] - \partial_x (\widehat \eps \vec g(\hatv, \partial_x \hatv)) }_{=: \cR_P},
\end{equation}
\ie we have a decomposition of the residual as assumed in previous Sections, see \eqref{interm} in particular.

\subsection{Extension of the reconstruction to $2$ space dimensions}
\label{subs:rec:2d}
In this section we present an extension of the reconstruction approach described before to semi-(spatially)-discrete dG schemes for systems of hyperbolic conservation laws \eqref{sim}
complemented with initial data $\vec v(\cdot,0) = \vec v_0 \in L^\infty(\rT^2)$ using Cartesian meshes in two space dimensions.
The extension to Cartesian meshes in more than two dimensions is straightforward.

We consider a system of hyperbolic conservation laws in two space dimensions
\begin{equation}\label{claw}
 \del_t \vec v + \del_{x_1} \vec  f_1(\vec v) + \del_{x_2} \vec f_2(\vec v) =0,
\end{equation}
where $\vec f_{1,2} \in \cont{2}(U , \rR^n).$ 

We discretise $\rT^2$ using partitions 
\[-1=x_0 < x_1 < \dots < x_N =1, \quad -1=y_0 < y_1 < \dots < y_M =1.\] 
We consider a Cartesian mesh $\cT$ such that  each element  satisfies $K=[x_i,x_{i+1}] \times [y_j,y_{j+1}]$ 
for some $(i,j)\in \{0,\dots,N-1\} \times \{0,\dots,M-1\}.$
For any $p,q \in \rN$ and $K \in \cT$ let
\[ \rP_q \otimes \rP_p (K) := \rP_q([x_i,x_{i+1}]) \otimes \rP_p ([y_j,y_{j+1}]).\]
By $\fes_{p,q}$ we denote the space of trial and test functions, \ie
\[ \fes_{p,q}:= \{ \Phi : \rT^2  \rightarrow \rR^m \, : \, \Phi|_K \in (\rP_p \otimes \rP_q (K))^m \ \forall K \in \cT\}.\]
Note that our dG space has a tensorial structure on each element.

As before $\cE$ denotes the set of all edges, which can be decomposed into the sets of horizontal and vertical edges $\cE^h, \cE^v,$ respectively.
Let us define the following jump operators:
For $\Phi \in \sobh{1}(\cT,\rR^n)$ we define
\begin{align*}
  \jump{\Phi}^h &: \cE^v \rightarrow \rR^n\, \quad  \jump{\Phi}^h(\cdot) := \lim_{s \searrow 0} \Phi (\cdot - s \vec e_1) - \lim_{s \searrow 0} \Phi (\cdot + s\vec e_1)\\
    \jump{\Phi}^v & : \cE^h \rightarrow \rR^n, \quad  \jump{\Phi}^v(\cdot) := \lim_{s \searrow 0} \Phi ( \cdot - s \vec e_2) - \lim_{s \searrow 0} \Phi (\cdot + s\vec  e_2).
\end{align*}

Let $\vec F_{1,2}$ be  numerical flux functions which satisfy conditions \eqref{eq:repf} and \eqref{eq:condw}  with functions $\vec w_1, \vec w_2: U \times U \rightarrow U,$
\ie
\[ \vec F_i (\vec u, \vec v) = \vec f_i(\vec w_i(\vec u, \vec v)) \quad \text{ for all } \vec u, \vec v \in U \text{ and } i=1,2.\]
Then, we consider semi-(spatially)-discrete discontinuous Galerkin schemes  given as follows:
Search for $\vec v_h\in \cont{1}([0,\infty) , \fes_{q,q})$ satisfying
\begin{multline}\label{sch1}
 \int_{\T{}} \del_t \vec v_h \Phi  -  \vec f_1(\vec v)\del_{x_1} \Phi  -  \vec f_2(\vec v)\del_{x_2} \Phi\\
 + \int_{\E^v} \vec F_1(\vec v_h^-,\vec v_h^+) \jump{\Phi}^h + \int_{\E^h} \vec F_2(\vec v_h^-,\vec v_h^+) \jump{\Phi}^v =0 \quad \forall \Phi \in \fes_{q,q}.
\end{multline}

While we have avoided choosing particular bases of our dG spaces we will do so now as we believe that it makes the presentation of our reconstruction approach more concise.
We choose so-called \emph{nodal basis functions} consisting of Lagrange polynomials, see \cite{HW08}, and as we use a Cartesian mesh we may use tensor-products of one-dimensional Lagrange polynomials
to this end. 
We associate the Lagrange polynomials with Gauss points, as in this way the nodal basis functions form an orthogonal basis of our dG space $\fes_{q,q},$ due to
the exactness properties of Gauss quadrature, see \cite[e.g.]{Hin12}.
We will introduce some notation now:
Let $\{ \xi_0,\dots,\xi_{q}\} $ denote the Gauss points on $[-1,1].$ 
%
%
For any element $K= [x_i,x_{i+1}] \times [y_j,y_{j+1}] \in \cT$ let $\{ \xi_0^{K,1},\dots,\xi_{q}^{K,1}\} $ and $\{ \xi_0^{K,2},\dots,\xi_{q}^{K,2}\} $ denote their image
under the linear bijections $[-1,1] \rightarrow [x_i,x_{i+1}]$ and $[-1,1] \rightarrow [y_j,y_{j+1}].$
For $i=1,2$ we denote by $\mathit{l}^{K,i}_j$  the Lagrange polynomial satisfying $\mathit{l}^{K,i}_j(\xi_k^{K,i})=\delta_{jk}$. 

\begin{definition}[Flux reconstruction]
Let $\hatf_1 \in \fes_{q+1,q} $ satisfy
\begin{equation}
 \label{recon1a}
 \int_{\T{}} (\del_{x_1} \hatf_1) \Phi  = - \int_{\T{}} \vec f_1(\vec v_h)\del_{x_1} \Phi 
 + \int_{\E^v} \cP_q[\vec F_1(\vec v_h^-,\vec v_h^+)] \jump{\Phi}^h \quad \forall \Phi \in \fes_{q,q},
\end{equation}
where $\cP_q$ denotes $\leb{2}$-orthogonal projection in the space of piece-wise polynomials of degree $\leq q$ on $\cE^v,$ and 
\begin{equation}
 \label{recon1b} \hatf_1(x_i,\xi^{K,2}_k)^+ := \lim_{s \searrow 0} \hatf_1(x_i+s,\xi^{K,2}_k) = \cP_q[\vec F_1(\vec v_h^-,\vec v_h^+)](x_i,\xi^{K,2}_k)
\end{equation}
for $k=0,\dots,q$ and all $K \in \cT.$
The definition of $\hatf_2 \in \fes_{q,q+1} $ is analogous.
\end{definition}

\begin{remark}[Regularity of flux reconstruction]
 Note that in order to split the residual in two space dimensions in a way analogous to what we did in \eqref{eq:dres} we require that for $\alpha=1,2$ the components of 
 the flux reconstruction
 $\hatf_\alpha $ are Lipschitz continuous in $x_\alpha$-direction.
 This is exactly what is needed such that $\partial_{x_\alpha} \hatf_\alpha$ makes sense in $\leb{\infty}.$
\end{remark}

\begin{lemma}[Properties of flux reconstruction]
 The flux reconstructions $\hatf_1$, $\hatf_2$ are well defined; and $\hatf_1$ is Lipschitz continuous in $x_1$-direction and $\hatf_2$ is Lipschitz continuous in $x_2$-direction.
\end{lemma}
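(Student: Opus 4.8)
The plan is to reduce the whole statement to the one-dimensional flux reconstruction of \cite{GMP_15}, applied line by line, by exploiting the tensor-product structure of $\fes_{q+1,q}$ together with the orthogonality of the Gauss-point nodal basis. Fix a row of elements with common $x_2$-extent, a component index $m$, and a $y$-direction Gauss index $k\in\{0,\dots,q\}$. Testing \eqref{recon1a} against $\Phi=\phi(x_1)\,\mathit{l}^{K,2}_k(x_2)\,\vec e_m$, with $\phi$ a piecewise-$\rP_q$ function of $x_1$ supported on that row, and using that the $(q+1)$-point Gauss rule is exact for polynomials of degree $\leq 2q$, the $x_2$-degrees of freedom decouple: the left-hand side collapses (up to the Gauss weight $\omega_k$) to $\int(\partial_{x_1}\hatf_1)_m(\cdot,\xi_k^{K,2})\,\phi$, the edge contribution over $\cE^v$ collapses to point evaluations of $\cP_q[\vec F_1(\vec v_h^-,\vec v_h^+)]$ at the nodes $\xi_k^{K,2}$ (here one uses that the two elements adjacent to a vertical edge share the same $x_2$-interval, hence the same Gauss nodes), and the volume contribution becomes $\int(\int(\vec f_1(\vec v_h))_m(\cdot,y)\,\mathit{l}^{K,2}_k(y)\,dy)\,\phi'$. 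Thus, for each fixed $k$, the nodal trace $\hatf_1(\cdot,\xi_k^{K,2})$ solves, in the $x_1$-variable, exactly the one-dimensional flux-reconstruction problem of \cite{GMP_15} on the partition $x_0<\dots<x_N$, with the numerical flux replaced by its $\cP_q$-projection and the advective flux replaced by a $y$-weighted average of it, while \eqref{recon1b} supplies the matching boundary prescription $\hatf_1^+=\cP_q[\vec F_1(\vec v_h^-,\vec v_h^+)]$ at the endpoints $x_i$.

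Well-definedness then follows interval by interval. On $[x_i,x_{i+1}]$ the map $\rP_{q+1}\to\rP_q\times\rR$, $p\mapsto(p',p(x_i))$, is a linear bijection (both spaces have dimension $q+2$ and the kernel is trivial). The reduced identity, tested against a basis of $\rP_q$ and combined with positive definiteness of the $\leb{2}$ inner product on $\rP_q$, determines $(\partial_{x_1}\hatf_1)_m(\cdot,\xi_k^{K,2})|_{[x_i,x_{i+1}]}\in\rP_q$ uniquely, and \eqref{recon1b} fixes the remaining constant of integration; hence $\hatf_1(\cdot,\xi_k^{K,2})$ is uniquely determined on each element. Since this holds for all $q+1$ nodes $\xi_0^{K,2},\dots,\xi_q^{K,2}$, Lagrange interpolation in $x_2$ reconstructs a unique element of $\rP_{q+1}\otimes\rP_q(K)$ on each $K$, so $\hatf_1\in\fes_{q+1,q}$ is well defined. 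The construction of $\hatf_2$ is identical with the roles of $x_1,x_2$ and of $\cE^v,\cE^h$ interchanged.

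For the Lipschitz property, note that $\hatf_1$ is a polynomial on each element, so it suffices to verify that it matches across every vertical edge. Along the Gauss line $x_2=\xi_k^{K,2}$, testing the reduced identity with $\Phi=\mathbbm{1}_{(x_i,x_{i+1})}(x_1)\,\mathit{l}^{K,2}_k(x_2)\,\vec e_m$ annihilates the volume term and, since $\jump{\mathbbm{1}_{(x_i,x_{i+1})}}^h$ equals $-1$ at $x_i$ and $+1$ at $x_{i+1}$, leaves $\int_{x_i}^{x_{i+1}}(\partial_{x_1}\hatf_1)_m(\cdot,\xi_k^{K,2})\,dx_1=\cP_q[\vec F_1](x_{i+1},\xi_k^{K,2})-\cP_q[\vec F_1](x_i,\xi_k^{K,2})$. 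Combining this with the fundamental theorem of calculus on $[x_i,x_{i+1}]$ and with the boundary value $\hatf_1(x_i,\xi_k^{K,2})^+=\cP_q[\vec F_1](x_i,\xi_k^{K,2})$ from \eqref{recon1b}, we obtain $\hatf_1(x_{i+1},\xi_k^{K,2})^-=\cP_q[\vec F_1](x_{i+1},\xi_k^{K,2})=\hatf_1(x_{i+1},\xi_k^{K,2})^+$, i.e.\ $\hatf_1$ matches at the vertical edge $x=x_{i+1}$ at each $y$-Gauss node; since both one-sided traces are of degree $\leq q$ in $x_2$ and agree at $q+1$ nodes, they agree identically on the edge. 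Hence $\hatf_1$ is continuous across every vertical edge and polynomial on every element, so its restriction to any horizontal line is continuous and piecewise smooth, which is exactly the assertion that $\partial_{x_1}\hatf_1\in\leb{\infty}$; the Lipschitz constant is bounded by $\max_{K}\Norm{\partial_{x_1}\hatf_1}_{\leb{\infty}(K)}$. The statement for $\hatf_2$ follows by the symmetric argument.

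The main obstacle is the bookkeeping in the first step: one must check carefully that the Gauss-point nodal basis genuinely diagonalises the $x_2$-dependence of \emph{every} term, in particular of the edge integrals over $\cE^v$, where the decoupling relies on the two elements sharing a vertical edge having the same $x_2$-partition interval (hence the same Gauss nodes) and on exactness of the $(q+1)$-point Gauss rule for the degree-$2q$ integrands that occur. Once this reduction is carried out, the remaining content is precisely the one-dimensional theory of \cite{GMP_15}, applied along each $y$-Gauss line with the numerical flux replaced by its $\cP_q$-projection, so no genuinely new estimate is required.
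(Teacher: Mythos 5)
Your proposal is correct and follows essentially the same route as the paper: well-definedness comes from the fact that \eqref{recon1a} fixes $\partial_{x_1}\hatf_1$ while \eqref{recon1b} fixes the remaining $x_2$-only degrees of freedom, and continuity across vertical edges is obtained by testing with the indicator-times-Lagrange function $\chi_K^k$, evaluating the resulting integral both by the fundamental theorem of calculus along the Gauss line and via \eqref{recon1a}, and then matching the two degree-$q$ traces at the $q+1$ Gauss nodes. Your line-by-line reduction to the one-dimensional reconstruction of \cite{GMP_15} is just a more explicit packaging of the paper's kernel argument, enabled by the same tensor-product and Gauss-orthogonality facts.
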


\begin{proof}
We will give the proof for $\hatf_1$.  
For every $K \in \T{}$ the restriction $\hatf_1|_K$  is determined by \eqref{recon1a} up to a linear combination (in each component) of
\[ 1 \otimes \mathit{l}^{K,2}_0, \dots, 1 \otimes \mathit{l}^{K,2}_{q},\]
where $1$ denotes the polynomial having the value $1$ everywhere.
Prescribing \eqref{recon1b} obviously fixes these degrees of freedom.
Therefore, $\hatf_1$ exists, is uniquely determined, and locally computable.

For showing  that $\hatf_1$ is Lipschitz in the $x_1$-direction
it suffices to prove that $\hatf_1$ is continuous along the 'vertical' faces.
Let $K= [x_i,x_{i+1}] \times [y_j,y_{j+1}] \in \cT$ then we define 
\[ \chi_K^k := 1_{[x_i,x_{i+1}]} \otimes (l^{K,2}_k \cdot 1_{[y_j,y_{j+1}]})\]
where for any interval $I$ we denote the characteristic function of that interval by $1_I.$
For any  $k \in \{0,\dots, q\}$ we have on the one hand
\begin{equation}\label{recon1c} \int_{\T{}} \del_{x_1} \hatf_1  \chi_K^k
 = \omega_k h^y_j \int_{x_i}^{x_{i+1}} \del_{x_1} \hatf_1(\cdot, \xi^{K,2}_k) =  \omega_k h^y_j  \big(\hatf_1(x_{i+1}, \xi^{K,2}_k)^- - \hatf_1(x_{i}, \xi^{K,2}_k)^+\big)
 \end{equation}
 where $h^y_j = y_{j+1} - y_j$ and $\omega_k$ is the Gauss quadrature weight associated to $\xi_k$.
On the other hand we find, using \eqref{recon1a},
\begin{equation}\label{recon1d} \int_{\T{}} \del_{x_1} \hatf_1 \chi_K^k \\
 = \omega_k h^y_j \left( \cP_q[\vec F_1(\vec v_h^-,\vec v_h^+)](x_{i+1},\xi^{K,2}_k ) - \cP_q[\vec F_1(\vec v_h^-,\vec v_h^+)](x_{i},\xi^{K,2}_k )\right).
 \end{equation}
Combining \eqref{recon1c}, \eqref{recon1d} and \eqref{recon1b} 
we obtain
 \begin{equation}\label{recon1f} \hatf_1(x_{i+1}, \xi^{K,2}_k)^- 
  = \cP_q[\vec F_1(\vec v_h^-,\vec v_h^+)](x_{i+1},\xi^{K,2}_k )  = \hatf_1(x_{i+1}, \xi^{K,2}_k)^+ \quad \text{for } k=0,\dots,q.
 \end{equation}
As $\hatf_1(x_{i+1}, \cdot)^\pm|_{[y_j,y_{j+1}]}$ is a polynomial of degree $q$  and $k$ is arbitrary in equation \eqref{recon1f} we find 
\begin{equation}\label{recon1g}
 \hatf_1(x_{i+1}, \cdot)^+|_{[y_j,y_{j+1}]} = \hatf_1(x_{i+1}, \cdot)^-|_{[y_j,y_{j+1}]}.
\end{equation}
As $i,j$ were arbitrary this implies Lipschitz continuity of $\hatf_1$ in $x_1$-direction.

\end{proof}

From equations \eqref{sch1} and \eqref{recon1a} we obtain
the following pointwise equation almost everywhere:
\begin{equation}\label{sch4}
\del_t \vec v_h + \del_{x_1} \hatf_1 + \del_{x_2} \hatf_2=0 .
\end{equation}

\begin{remark}[Main idea of a $2$ dimensional reconstruction]
Recalling the arguments presented in previous Sections our main priority is to make $\hatv $ Lipschitz continuous.
 The particular reconstruction we describe is based on the following principles inspired by \S \ref{subs:rec:claw}.
We wish  $\hatv|_K - \vec v_h|_K$ to be orthogonal  to polynomials on $K$ of degree $q-1$ which is ensured by imposing them to coincide on the tensor product Gauss points.
We wish $\hatf_1 $ and $\vec f_1(\hatv)$ to be similar on vertical faces which is ensured by fixing the values of $\hatv $ on points of the form $(x_i, \xi^{K,2}_l)$ when $K= [x_i,x_{i+1}]\times [y_j,y_{j+1}].$
Imposing the conditions described above on a reconstruction in $\fes_{q+1,q+1}$ is impossible because it does not have
enough degrees of freedom.
Thus, we define a reconstruction $\hatv \in \fes_{q+2,q+2}.$ For such a function imposing the degrees of freedom described above
leaves four degrees of freedom per cell undefined. Thus, we may prescribe values in corners.
To this end let us fix an averaging operator $\bar {\vec w}: U^4 \rightarrow U.$
\end{remark}

\begin{definition}[Solution reconstruction]
We define (at each time) the reconstruction $\hatv \in \fes_{q+2,q+2}$ of $\vec v_h \in \fes_{q,q}$ by prescribing for every $K= [x_i,x_{i+1}]\times [y_j,y_{j+1}] \in \cT$
\begin{equation}\label{urec}
 \begin{split}
  \hatv|_K (\xi^{K,1}_k, \xi^{K,2}_l) &= \vec v_h (\xi^{K,1}_k, \xi^{K,2}_l) \quad \text{ for } k,l=0,\dots,q \\
  \hatv|_K  (x_i,\xi^{K,2}_l ) &= \vec w_1 ( \vec v_h (x_i, \xi^{K,2}_l)^-, \vec v_h (x_i, \xi^{K,2}_l)^+) \quad \text{ for }  l=0,\dots,q \\
  \hatv|_K  (x_{i+1}, \xi^{K,2}_l) &=\vec w_1 (  \vec v_h (x_{i+1}, \xi^{K,2}_l)^-, \vec v_h (x_{i+1}, \xi^{K,2}_l)^+) \quad \text{ for }  l=0,\dots,q \\
   \hatv|_K  ( \xi^{K,1}_k, y_j) &=\vec w_2 (  \vec v_h ( \xi^{K,1}_k, y_j)^-, \vec v_h ( \xi^{K,1}_k, y_j)^+) \quad \text{ for }  k=0,\dots,q \\
      \hatv|_K  ( \xi^{K,1}_k, y_{j+1}) &=\vec w_2 (  \vec v_h ( \xi^{K,1}_k, y_{j+1})^-, \vec v_h ( \xi^{K,1}_k, y_{j+1})^+) \quad \text{ for }  k=0,\dots,q \\
  \hatv|_K  ( x_i, y_j) &=\bar{\vec w} ( \lim_{s \searrow 0} \vec v_h (x_i +s, y_j +s), 
                                         \lim_{s \searrow 0} \vec v_h (x_i -s, y_j +s),\\
                  & \quad \qquad        \lim_{s \searrow 0} \vec v_h (x_i +s, y_j -s),
                                         \lim_{s \searrow 0} \vec v_h (x_i -s, y_j -s))
 \end{split}
\end{equation}
and analogous prescriptions for the remaining three corners of $K$.
\end{definition}

\begin{lemma}[Properties of $\hatv$]
 The reconstruction $\hatv,$ is well-defined, locally computable and Lipschitz continuous.
 Moreover,  for $q \geq 1$ the following local conservation property is satisfied:
 \begin{equation}\label{eq:consrec} \int_{K} \hatv - \vec v_h =0 \quad \forall \ K \in \cT.\end{equation}
\end{lemma}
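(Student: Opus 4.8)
The plan is to verify the three claims — well-definedness and local computability, Lipschitz continuity, and the local conservation property — in that order, since each builds on the previous. First I would count degrees of freedom: a function in $\fes_{q+2,q+2}$ restricted to a cell $K$ has $(q+3)^2$ scalar degrees of freedom per component. The prescriptions in \eqref{urec} fix the $(q+1)^2$ interior tensor-product Gauss values, plus $4(q+1)$ values along the four edges (at points $(x_i,\xi^{K,2}_l)$ etc.), plus $4$ corner values, for a total of $(q+1)^2 + 4(q+1) + 4 = (q+3)^2$ conditions. One then checks that these point-evaluation functionals are linearly independent on $\rP_{q+2}\otimes\rP_{q+2}(K)$ — this is a standard unisolvence statement for tensor-product Lagrange-type interpolation with Gauss points in the interior and the two endpoints adjoined in each direction — so $\hatv|_K$ exists, is unique, and depends only on data from $K$ and its neighbours, hence is locally computable.

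For Lipschitz continuity the key point is matching across edges. Consider a vertical face $\{x_{i+1}\}\times[y_j,y_{j+1}]$ shared by $K$ and its right neighbour $K'$. On the $K$-side, $\hatv|_K(x_{i+1},\cdot)$ restricted to $[y_j,y_{j+1}]$ is a polynomial of degree $q+2$ in $y$; it is pinned down by its values at the $q+1$ Gauss points $\xi^{K,2}_l$ (via the third line of \eqref{urec}) together with its two corner values (via the sixth line and its analogue) — that is $q+3$ conditions determining a degree-$(q+2)$ polynomial. The same values are prescribed from the $K'$-side, because $\vec w_1(\vec v_h^-,\vec v_h^+)$ at $(x_{i+1},\xi^{K,2}_l)$ is symmetric in the two one-sided limits and the corner averaging operator $\bar{\vec w}$ is evaluated on the same four one-sided limits regardless of which cell we view it from. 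Hence the two traces agree, so $\hatv$ is continuous across vertical faces; the analogous argument with $\vec w_2$ handles horizontal faces, and continuity of a piecewise polynomial on a Cartesian mesh gives global Lipschitz continuity. (Lipschitz continuity in time then follows from the Lipschitz continuity of $\vec w_1,\vec w_2,\bar{\vec w}$ and of $\vec v_h$, as in Section \ref{subs:rec:claw}, though this is not part of the present statement.)

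For the conservation property \eqref{eq:consrec}, fix $K$ and note $\int_K(\hatv-\vec v_h)$ is, componentwise, the integral over $K$ of a polynomial in $\rP_{q+2}\otimes\rP_{q+2}(K)$. On a tensor-product cell the integral factors, and on each interval a degree-$(q+2)$ polynomial is integrated \emph{exactly} by the $(q+1)$-point Gauss rule as long as $q+2 \leq 2(q+1)-1$, i.e. $q\geq 1$ — which is exactly the hypothesis. Therefore $\int_K(\hatv-\vec v_h) = \sum_{k,l}\omega_k\omega_l\,\tfrac{h^x_i h^y_j}{4}\,(\hatv-\vec v_h)(\xi^{K,1}_k,\xi^{K,2}_l)$, and every summand vanishes by the first line of \eqref{urec}. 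Hence $\int_K\hatv = \int_K\vec v_h$ for all $K$.

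I expect the main obstacle to be the unisolvence/linear-independence claim underpinning well-definedness: one must argue carefully that prescribing interior Gauss values, edge Gauss values, and corner values determines a unique element of the degree-$(q+2)$ tensor-product space, rather than over- or under-determining it. The cleanest route is to exploit the tensor structure — write $\hatv|_K = \sum_{m,n} c_{mn}\, p_m(x) r_n(y)$ with $\{p_m\}$ the Lagrange basis of $\rP_{q+2}$ at the node set $\{x_i\}\cup\{\xi^{K,1}_k\}\cup\{x_{i+1}\}$ and similarly $\{r_n\}$ — and observe that the prescribed functionals are precisely the evaluations at the grid of nodes $\{x_i,\xi^{K,1}_0,\dots,\xi^{K,1}_q,x_{i+1}\}\times\{y_j,\xi^{K,2}_0,\dots,\xi^{K,2}_q,y_{j+1}\}$ except possibly for a mismatch in how \eqref{urec} distributes the data; once one confirms the node counts match $(q+3)\times(q+3)$ and the nodes are distinct, standard tensor-product Lagrange interpolation gives existence and uniqueness immediately. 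The remaining two parts are then short.
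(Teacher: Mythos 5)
Your proposal is correct and follows essentially the same route as the paper: continuity across faces is obtained by matching the $q+3$ prescribed trace values of a degree-$(q+2)$ polynomial on each edge, and the conservation property follows from the exactness of $(q+1)$-point Gauss quadrature for degree $2q+1\geq q+2$ when $q\geq 1$. You additionally supply the tensor-product unisolvence count $(q+1)^2+4(q+1)+4=(q+3)^2$ for well-definedness, which the paper omits; the only small imprecision is that the traces from neighbouring cells agree not because $\vec w_1$ is symmetric but because \eqref{urec} feeds the same ordered pair of one-sided limits into $\vec w_1$ from both sides.
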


\begin{proof}
We will only prove the Lipschitz continuity and the conservation property. As $\hatv$ is piecewise polynomial it is sufficient to prove continuity to show Lipschitz continuity.
Let $K=[x_i,x_{i+1}]\times [y_j,y_{j+1}]$ and $K'= [x_{i-1},x_{i}]\times [y_j,y_{j+1}]$ then
$\hatv|_K $ and $\hatv|_{K'}$ coincide on $(x_i,y_j)$, $(x_i, \xi^{K,2}_k)_{k=0,\dots,q}$ and $(x_i,y_{j+1})$.
Therefore, $\hatv|_K $ and $\hatv|_{K'}$ coincide on $\{x_i\} \times [y_j,y_{j+1}]$. 
Analogous arguments hold for the other edges, such that $\hatv$ is indeed (Lipschitz) continuous.

As the nodal points on each element have tensor structure we can use the exactness properties of one-dimensional Gauss quadrature.
The conservation property \eqref{eq:consrec} follows from the fact that one-dimensional Gauss quadrature with $q+1$ Gauss points is exact for polynomials
of degree up to $2q+1$ which is larger or equal $q+2$ provided $q \geq 1.$
\end{proof}

\begin{remark}[Reconstructions for hyperbolic/parabolic problems in $2$ dimensions]
In order to obtain reconstructions and splittings of residuals into hyperbolic and parabolic parts for numerical discretisations of \eqref{interm-c}
 the reconstructions $\hatv, \hatf_\alpha$ described in this section can be used in the same way the reconstructions from \S \ref{subs:rec:claw} were used in  \S \ref{subs:rec:hp}.
 In particular, $\hatv$ described above is already regular enough to serve as a reconstruction in case of a numerical scheme
 for \eqref{interm-c}.
 The flux reconstructions $(\hatf_\alpha)_{\alpha=1,2}$ can be used to obtain a splitting analogous to \eqref{eq:dres} by making use of \eqref{sch4}.
\end{remark}


\section{Numerical experiments}

\label{sec:num}

%
%

In this section we study the numerical behaviour of the error
indicators $\cE_M$ and $\cE_D$ presented in the previous Sections and
compare this with the ``error'', which we quantify as the difference
between the numerical approximation of the adaptive model and the
numerical approximation of the full model, on some test problems.

The model adaptive algorithm we employ is encapsulated by the following pseudocode:
\subsection{{$\Algoname{Model Adaptation}$}}
\label{alg:model-adapt}
\begin{algorithmic}
  \Require
  $(\tau,t_0,T,\vec u^0,\tol,\tol_c,\varepsilon)$
  \Ensure $(\vec u_h^n)_{\rangefromto n1N}$, model adaptive solution
  \State $\widehat \varepsilon(x,0):=0$
  \Comment{Initialise parameters}  
  \State $t = t_0 + \tau, n=1$
  \While{$t\leq T$}
  \Comment{Loop in time}
  \State
  $(\vec u_h^n) := \Algoname{Solve one timestep of dg scheme} (\vec u_h^{n-1},\widehat \varepsilon)$
  
  \State Compute $\cE_M$
  
  \For{$K\in \T{}$}
  \Comment{Model ``coarsening'' strategy}
  \If{$\cE_M|_K < \abs{K} \ tol_c \ tol/\varepsilon$}
  
  \State $\widehat \varepsilon(x,t)|_K = 0$
  
  \EndIf

  \EndFor

  \State Recompute $\cE_M$ and compute $\cE_D$
  
  \If{$\cE_D + \cE_M > \tol$}
  \Comment{Model ``refinement'' strategy}
  
  \State Mark a subset of elements, $\{ J \}$ where $\cE_D + \cE_M$ is large
  
  \For{$K\in \{ J \}$}
  
  \State Set $\widehat \varepsilon(x,t)|_K := \varepsilon$
  
  \EndFor

  \EndIf
  
  \State $t := t + \tau, n := n+1$
    
  \EndWhile
  
  \State return $(\vec u_h^n)_{\rangefromto n1N}$,
\end{algorithmic}

\begin{Rem}[Coupling to other adaptive procedures]
  The a posteriori bound given in Theorem \ref{the:2} has a structure
  which allows for both model and mesh adaptivity. This means that
  Algorithm \ref{alg:model-adapt} could be  coupled with other
  mechanisms employing $h$-$p$ spatial adaptivity in addition to local
  timestep control.  As can be seen from the pseudocode we use the
  complex model even in the case the discretisation error $\cE_D$ is
  large and the modelling error $\cE_M$ is small. In the first few
  tests we focus on the effect of model adaptation only and will
  demonstrate one possibility of coupling model and mesh adaptivity in
  the final test.
\end{Rem}

\subsection{Test 1 : The scalar case - the 1d viscous and inviscid Burgers' equation}
\label{sec:Burger1d}
We conduct an illustrative experiment using Burgers' equation. In this
case the ``complex'' model which we want to approximate is given by
\begin{equation}
  \label{eq:viscburger}
  \pdt u_\varepsilon + \pd{x}{\qp{\frac{u_\varepsilon^2}{2}}} = \varepsilon \pd{xx} u_\varepsilon
\end{equation}
for fixed $\varepsilon = 0.005$ with homogeneous Dirichlet boundary
data and the ``simple'' model we will use in the majority of the domain is
given by
\begin{equation}
  \label{eq:burger}
  \pdt u + \pd{x}{\qp{\frac{u^2}{2}}} = 0.
\end{equation}

We discretise the problem (\ref{eq:burger}) using a piecewise linear
dG scheme (\ref{eq:sddg}) together with Richtmyer type fluxes given by
\begin{equation}
  \label{eq:Richtmyer}
  F (v_h^-,v_h^+)
  =
  f\qp{\frac{1}{2}\qp{v_h^- + v_h^+} - \frac{\tau}{h}\qp{f(v_h^+) - f(v_h^-)}}.
\end{equation}
Note that these fluxes satisfy the assumptions
(\ref{eq:repf})--(\ref{eq:condw}). The dG formulation is then given by
\eqref{eq:sddg}. In the region where the ``complex'' model
(\ref{eq:viscburger}) is implemented for the discretisation of the
diffusion term we use an interior penalty (IP) discretisation with
piecewise constant $\widehat \varepsilon$, that is
\begin{equation}
  \widehat \varepsilon =
  \begin{cases}
    0.005 \text{ over cells where the a posteriori model error bound is large}
    \\
    0 \text{ otherwise.}
  \end{cases}
\end{equation}
This means the discretisation becomes
\begin{equation}\label{eq:sddg2n}
\begin{split}
  \int_{\cT} \partial_t u_h \cdot \phi -  f(u_h) \cdot \partial_x \phi + \int_{\cE} F (u_h^-, u_h^+) \jump{\phi}
  +
  \bih{u_h}{\phi; \widehat \varepsilon } &= 0
  \quad \text{for all } \phi \in \fes_q,
  \\
  u_h(0) &= \cP_q [u_0]
\end{split}
\end{equation}
where
\begin{equation}
  \bih{w_h}{\phi; \widehat \varepsilon }
  =
  \int_{\T{}} \widehat \varepsilon \pd x {w_h} \cdot \pd x {\phi}
  -
  \int_\E
  \jump{w_h}\cdot \avg{\widehat \varepsilon \pd x {\phi}}
  +
  \jump{\phi}\cdot \avg{\widehat \varepsilon \pd x {w_h}}
  -
  \frac{\sigma(\epsilon)}{h} \jump{w_h}\cdot\jump{\phi}
\end{equation}
and $\sigma(\epsilon) = 10\epsilon$ is the penalty parameter. Initial conditions are chosen as
\begin{equation}
  u(x,0) := \sin{x}
\end{equation}
over the interval $[-\pi,\pi]$. We use a first order IMEX temporal
discretisation where the diffusion is taken implicitly and the other
terms explicitly. We take $\tau = 10^{-4}$ and $h = \pi/500$ uniformly
over the space-time domain. For this test the parameters for Algorithm
\ref{alg:model-adapt} are $\tol = 10^{-2} \AND \tol_c = 10^{-3}$. Note
that these are user-specified parameters.  The results are summarised
in Figures \ref{fig:burger} and \ref{fig:burger-err}. In Figure
\ref{fig:burger} we show snapshots of the solution over time together
with the value of the model adaptivity parameter. In Figure
\ref{fig:burger-err} we display the error induced by solving the
``complex'' model only over part of the domain.

\begin{figure}[!ht]
  \caption[]
          {\label{fig:burger}
            A numerical experiment testing model adaptivity on Burgers' equation. The simulation is described in \S\ref{sec:Burger1d}. 
            Here we display the solution at various times (top) together with a representation of the model adaptation parameter $\widehat \varepsilon$ (bottom).
            Blue is the region $\widehat \varepsilon=0$, where the simplified (inviscid Burgers') problem is being computed and red is where $\widehat \varepsilon = \varepsilon \neq 0$, where the full (viscous Burgers') problem is being computed.
            We see that initially only the simplified model is computed but as time progresses the full model is solved in a region around where the steep layer forms. As this forms the domain where the complex model is solved collapses and eventually is very localised around the layer.
  }
  \begin{center}
    \subfigure[{
        $t=0$
    }]{
      \includegraphics[scale=\figscale,width=0.47\figwidth]{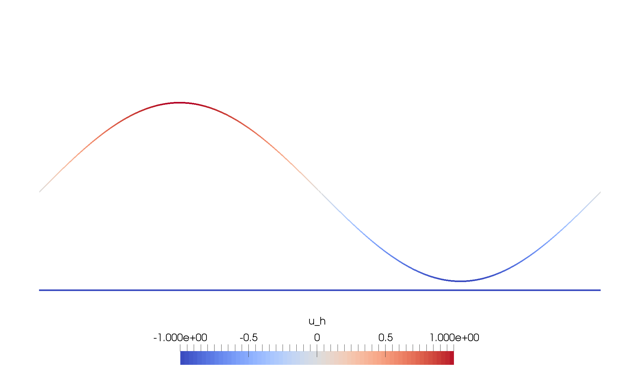}
    }
    \hfill
    \subfigure[{
        $t=0.5375$
    }]{
      \includegraphics[scale=\figscale,width=0.47\figwidth]{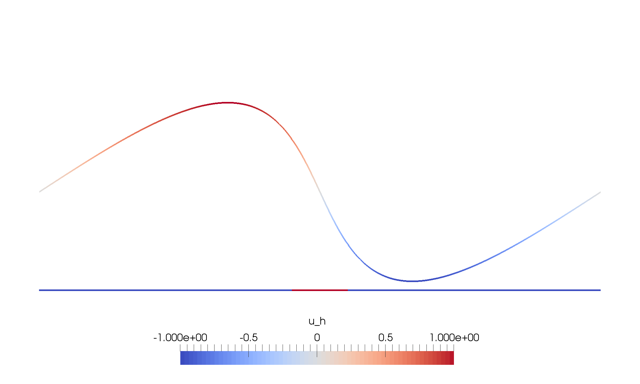}
    }
    \hfill
    \subfigure[{
        $t=1.1625$
    }]{
      \includegraphics[scale=\figscale,width=0.47\figwidth]{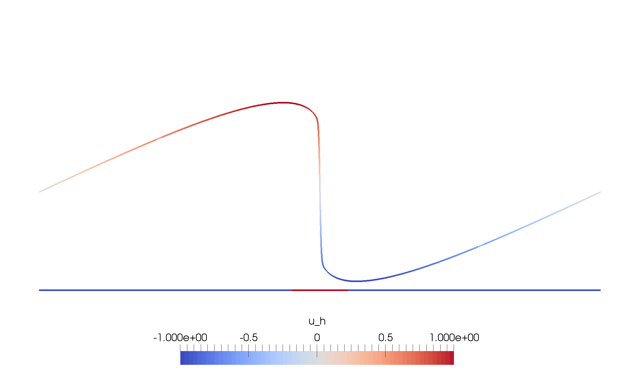}
    }
    \hfill
    \subfigure[{
        $t=1.3$
    }]{
      \includegraphics[scale=\figscale,width=0.47\figwidth]{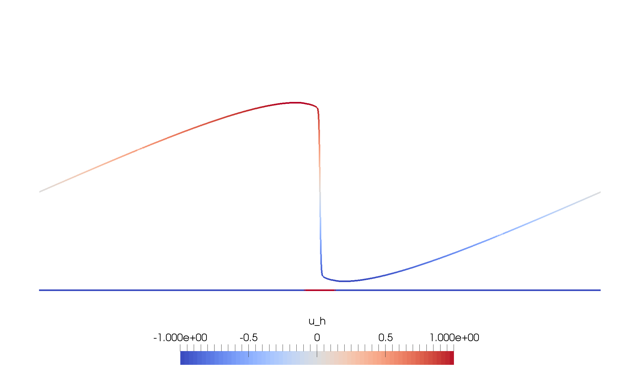}
    }
    \hfill
    \subfigure[{
        $t=1.55$
    }]{
      \includegraphics[scale=\figscale,width=0.47\figwidth]{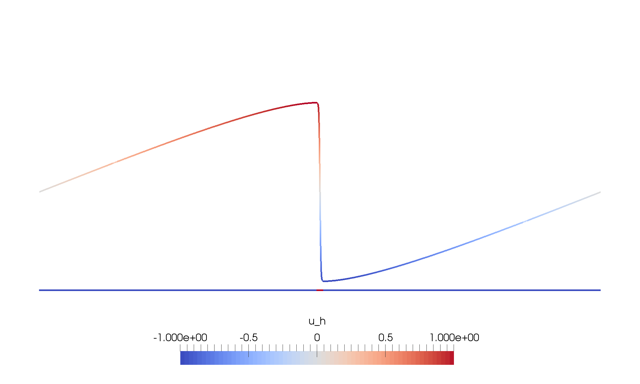}
    }
    \hfill
    \subfigure[{
        $t=2.5$
    }]{
      \includegraphics[scale=\figscale,width=0.47\figwidth]{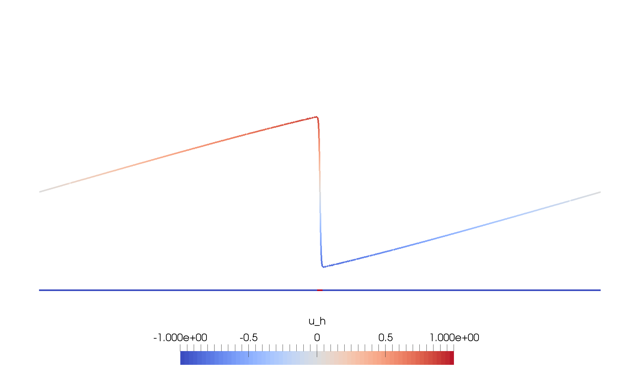}
    }
    \hfill
    \end{center}
  \end{figure}

\begin{figure}[!ht]
  \caption[]
          {\label{fig:burger-err}
            A numerical experiment testing model adaptivity on Burgers' equation. The simulation is described in \S\ref{sec:Burger1d}. Here we display the error $\norm{u_h-u_{\varepsilon,h}}$, that is, the difference between the approximation of the full expensive model and that of the adaptive approximation at the same times as in Figure \ref{fig:burger} together with a representation of $\widehat \varepsilon$ (bottom).
            An interesting phenomenon is the propagation of dispersive waves emanating from the interface between the region where $\widehat \varepsilon=0$ and that of $\widehat \varepsilon= \varepsilon \neq 0$.
  }
  \begin{center}
    \subfigure[{
        $t=0$
    }]{
      \includegraphics[scale=\figscale,width=0.47\figwidth]{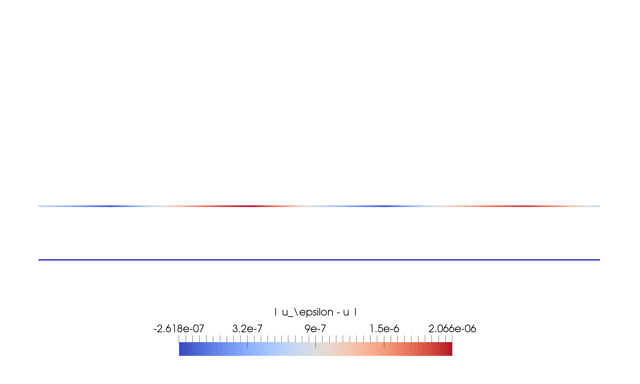}
    }
    \hfill
    \subfigure[{
        $t=0.5375$
    }]{
      \includegraphics[scale=\figscale,width=0.47\figwidth]{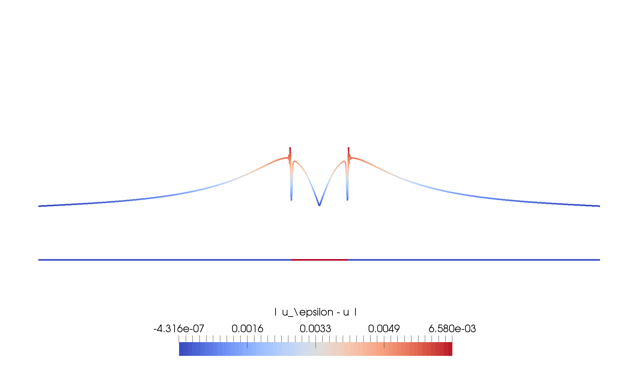}
    }
    \hfill
    \subfigure[{
        $t=1.1625$
    }]{
      \includegraphics[scale=\figscale,width=0.47\figwidth]{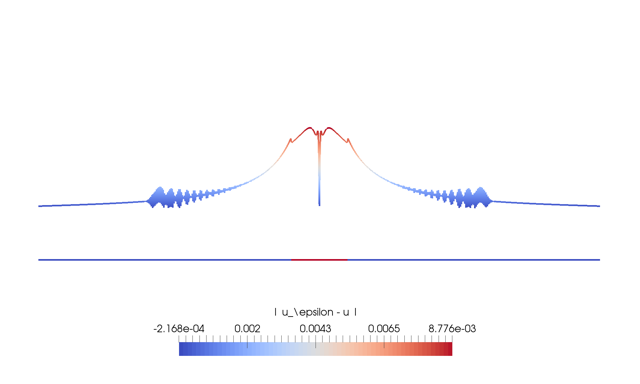}
    }
    \hfill
    \subfigure[{
        $t=1.3$
    }]{
      \includegraphics[scale=\figscale,width=0.47\figwidth]{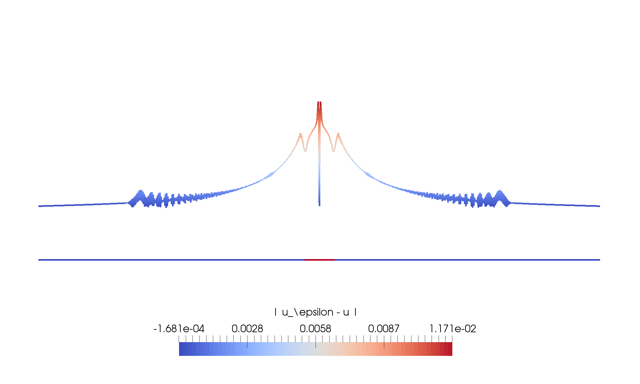}
    }
    \hfill
    \subfigure[{
        $t=1.55$
    }]{
      \includegraphics[scale=\figscale,width=0.47\figwidth]{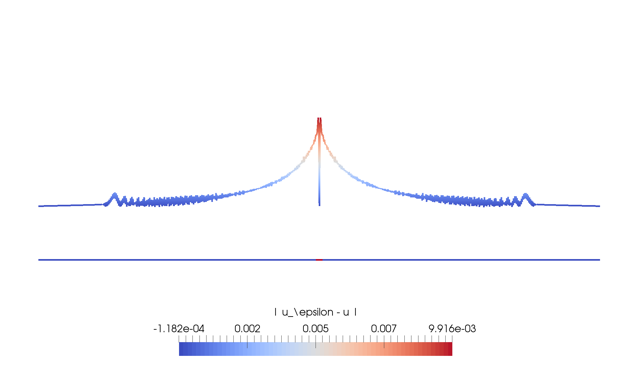}
    }
    \hfill
    \subfigure[{
        $t=2.5$
    }]{
      \includegraphics[scale=\figscale,width=0.47\figwidth]{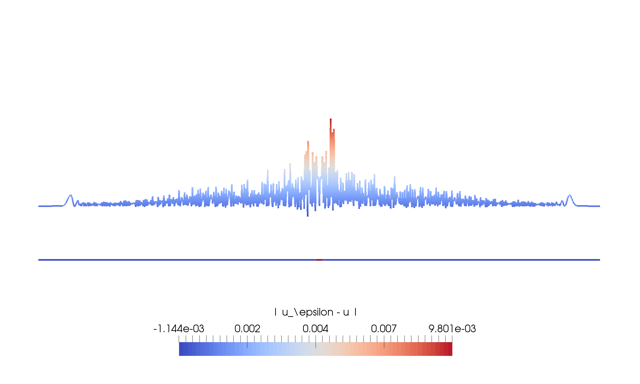}
    }
    \hfill
    \end{center}
  \end{figure}

\subsection{Test 2 : The scalar case - the 2d viscous and inviscid Burgers' equation}
\label{sec:Burger2d}
In this test we examine how the adaptive procedure extends into the multi-dimensional setting again using Burgers' equation as an illustrative example. In this
case the ``complex'' model which we want to approximate is given by
\begin{equation}
  \label{eq:viscburger2d}
  \pdt u_\varepsilon + \div{ \qp{\frac{\one u_\varepsilon^2}{2}}} = \varepsilon \Delta u_\varepsilon,
\end{equation}
where $\one = \Transpose{\qp{1,1}}$. The simple model we will use in the majority of the domain is given by 
\begin{equation}
  \label{eq:burger2d}
  \pdt u + \div{\qp{ \frac{\one u^2}{2}}} = 0.
\end{equation}
These are coupled with homogenous Dirichlet boundary conditions.  As
in Test 1 we make use of a 1st order IMEX, piecewise linear dG scheme
together with Richtmyer fluxes and an IP method for the viscosity. We
pick an initial condition
\begin{equation}
  u(\vec x,0) = \exp\qp{-10\norm{\vec x}^2}
\end{equation}
and use the parameters $\varepsilon = 0.01$, $h = \sqrt{2}/50$,
$\tau = \sqrt{2}/400$, $\tol = 10^{-2}$ and $\tol_c = 10^{-3}$ in
Algorithm \ref{alg:model-adapt}. The results are summarised in Figures
\ref{fig:burger2d} and \ref{fig:burger2d-err}. In Figure \ref{fig:burger2d}  snapshots of
the solution over time together with the value of the model adaptivity
parameter are shown. Figure \ref{fig:burger2d-err} displays the error induced by solving the ``complex'' model only
over part of the domain.

\begin{figure}[!ht]
  \caption[]
          {\label{fig:burger2d}
            A numerical experiment testing model adaptivity on Burgers' equation. The simulation is described in \S \ref{sec:Burger2d}. Here we display the solution at various times (top) together with a representation of $\widehat \varepsilon$ (bottom). Blue is the region $\widehat \varepsilon=0$,
            where the simplified (inviscid Burgers') problem is being computed and red is where $\widehat \varepsilon= \varepsilon  \neq 0$, where the full (viscous Burgers') problem is being computed.
  }
  \begin{center}
    \subfigure[{
        $t=0.0025$
    }]{
      \includegraphics[scale=\figscale,width=0.47\figwidth]{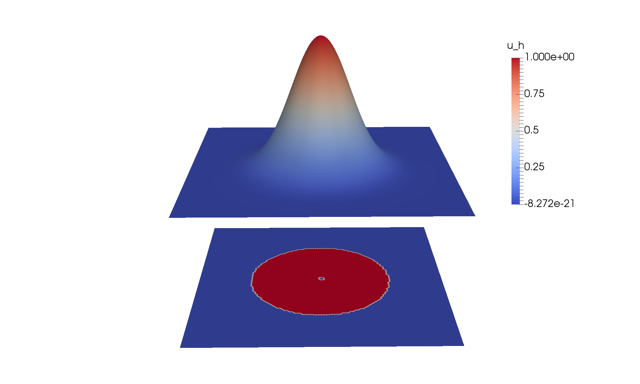}
    }
    \hfill
    \subfigure[{
        $t=0.25$
    }]{
      \includegraphics[scale=\figscale,width=0.47\figwidth]{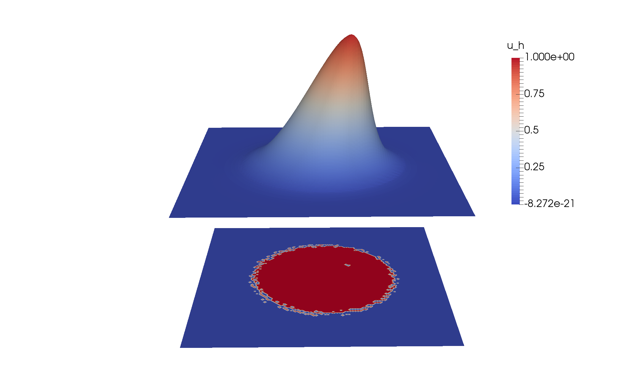}
    }
    \hfill
    \subfigure[{
        $t=0.5$
    }]{
      \includegraphics[scale=\figscale,width=0.47\figwidth]{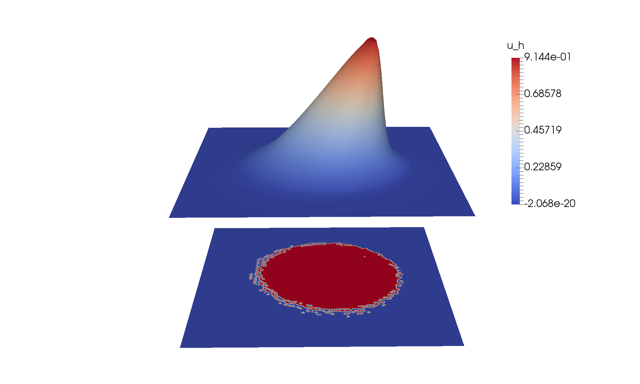}
    }
    \hfill
    \subfigure[{
        $t=1.$
    }]{
      \includegraphics[scale=\figscale,width=0.47\figwidth]{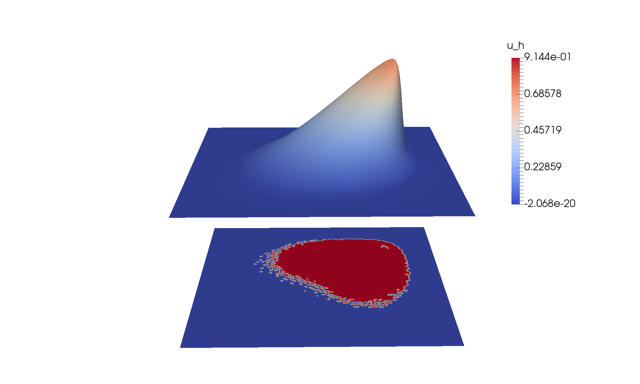}
    }
    \hfill
    \subfigure[{
        $t=1.25$
    }]{
      \includegraphics[scale=\figscale,width=0.47\figwidth]{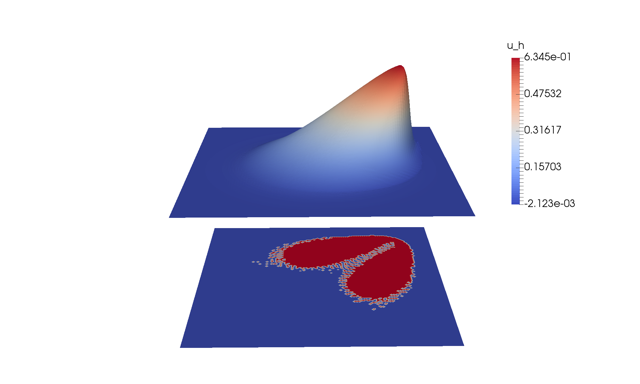}
    }
    \hfill
    \subfigure[{
        $t=1.5$
    }]{
      \includegraphics[scale=\figscale,width=0.47\figwidth]{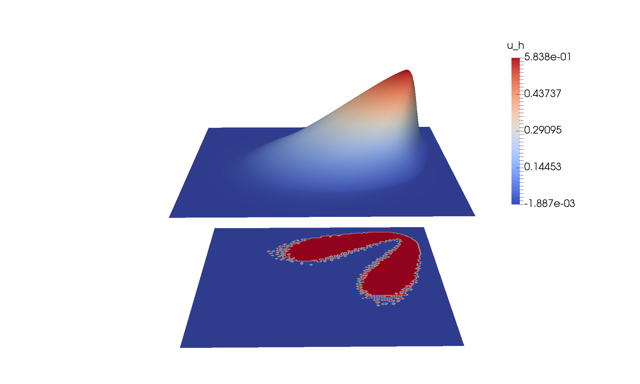}
    }
    \hfill
    \end{center}
  \end{figure}

\begin{figure}[!ht]
  \caption[]
          {\label{fig:burger2d-err}
            A numerical experiment testing model adaptivity on Burgers' equation. The simulation is described in \S\ref{sec:Burger2d}. Here we display the error $\norm{u_h-u_{\varepsilon,h}}$, that is, the difference between the approximation of the full expensive model and that of the adaptive approximation at the same times as in Figure \ref{fig:burger2d}.
  }
  \begin{center}
    \subfigure[{
        $t=0.0025$
    }]{
      \includegraphics[scale=\figscale,width=0.47\figwidth]{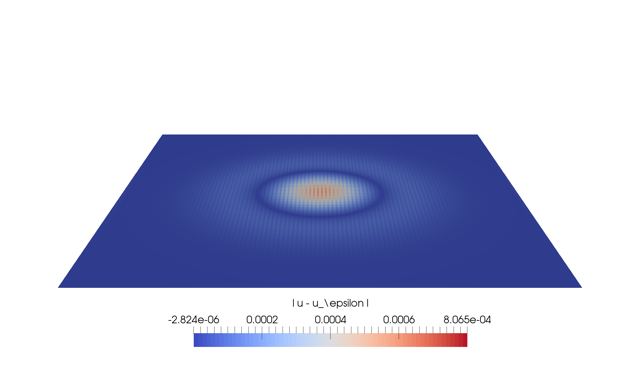}
    }
    \hfill
    \subfigure[{
        $t=0.25$
    }]{
      \includegraphics[scale=\figscale,width=0.47\figwidth]{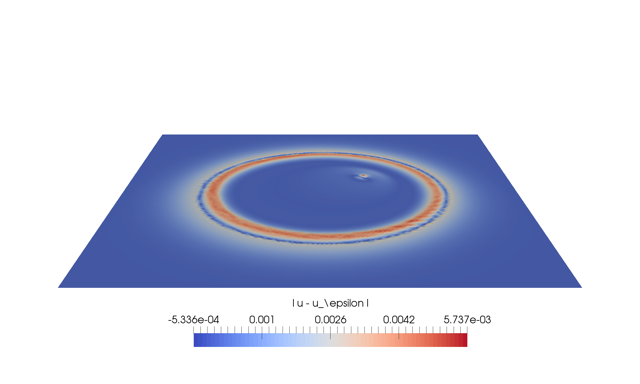}
    }
    \hfill
    \subfigure[{
        $t=0.5$
    }]{
      \includegraphics[scale=\figscale,width=0.47\figwidth]{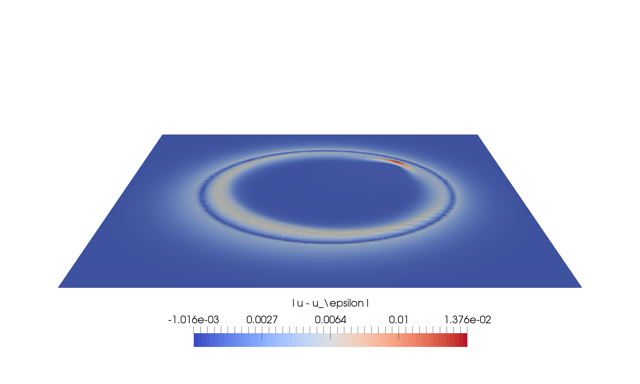}
    }
    \hfill
    \subfigure[{
        $t=1.$
    }]{
      \includegraphics[scale=\figscale,width=0.47\figwidth]{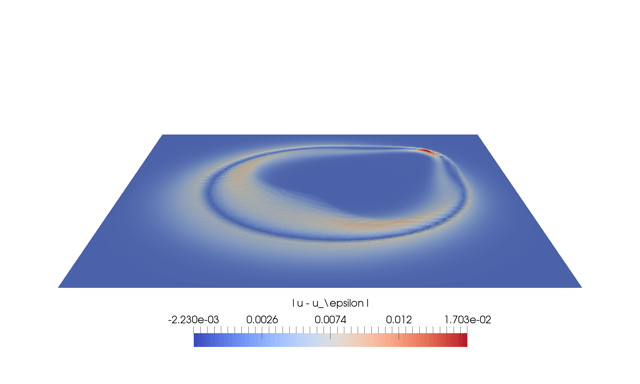}
    }
    \hfill
    \subfigure[{
        $t=1.25$
    }]{
      \includegraphics[scale=\figscale,width=0.47\figwidth]{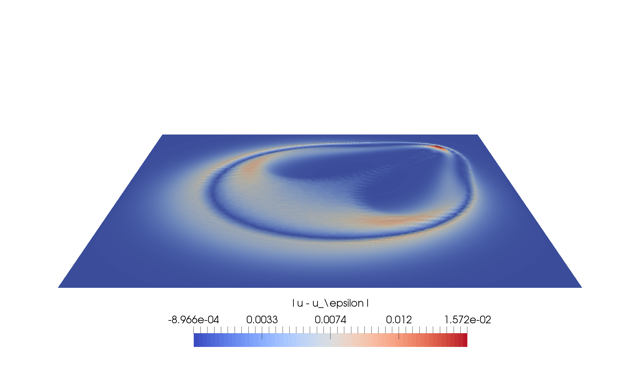}
    }
    \hfill
    \subfigure[{
        $t=1.5$
    }]{
      \includegraphics[scale=\figscale,width=0.47\figwidth]{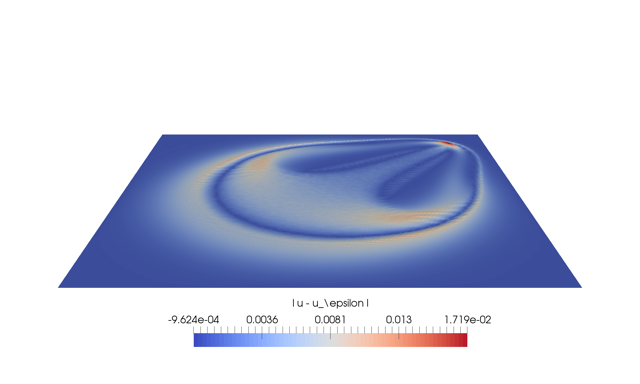}
    }
    \end{center}
  \end{figure}

\clearpage
\subsection{Test 3 : The Isothermal Navier-Stokes system}
\label{sec:INS}
In this test we examine the use of model adaptivity on the Isothermal
Navier-Stokes system in a situation where K\'arm\'an vortices are produced by
a flow over a cylinder. This can be achieved by seeking $\qp{\rho_\mu,
  \rho\bv_{\mu}}$ such that
\begin{equation}
  \label{eq:NS}
  \begin{split}
    \partial_t \rho_\mu + \div(\rho_\mu \bv_\mu) &=0\\
    \partial_t (\rho_\mu \bv_\mu) + \div(\rho_\mu \bv_\mu  \otimes \bv_\mu) + \nabla p &= \div (\mu \nabla \bv_\mu),
  \end{split}
\end{equation}
with a Reynolds number of $100$ (i.e., $\mu = \frac 1 {100}$). In this case our ``complex'' model is given by (\ref{eq:NS}) and the approximation is given by
\begin{equation}
  \label{eq:Euler}
  \begin{split}
    \partial_t \rho + \div(\rho \bv) &=0\\
    \partial_t (\rho \bv) + \div(\rho \bv  \otimes \bv) + \nabla p &= 0.
  \end{split}
\end{equation}
Here we use the relation $p(\rho) = 0.2 \rho$, through Boyle's law.
We write $\vec w = \qp{\rho_\mu, \rho_\mu \bv_\mu}$ for the unknowns
and then (\ref{eq:NS}) becomes
\begin{equation}
  \pd t {\vec w} + \div(\vec f(\vec w)) = \vec D(\vec w),
\end{equation}
with
\begin{equation}
  \vec D(\vec w) =
  \begin{bmatrix}
    0
    \\
    \div\qp{\mu \nabla \bv_\mu}
  \end{bmatrix}
\end{equation}
representing the diffusion term. We choose a piecewise linear dG
scheme similar to that in previous tests of the form
\begin{equation}\label{eq:insdg}
\begin{split}
  \int_{\cT} \partial_t \vec w_h \cdot \vec \phi -  \vec f(\vec w_h) : \nabla \vec \phi + \int_{\cE} \vec F (\vec w_h^-, \vec w_h^+) \jump{\vec \phi}
  &=
  \cD\qp{{\vec w_h,\vec \phi; \widehat \mu }}
  \quad \text{for all } \vec \phi \in \fes_q,
  \\
  u_h(0) &= \cP_q [u_0]
\end{split}
\end{equation}
where, in $2d$,
\begin{equation}
  \vec f(\vec w)
  =
  \begin{bmatrix}
    \rho v_1 & \rho v_2
    \\
    \rho v_1^2 + p & \rho v_1 v_2
    \\
    \rho v_2 v_1 & \rho v_2^2 + p
  \end{bmatrix},
\end{equation}
the $\div$ operator is understood to act row-wise,
\begin{equation}
  \cD\qp{\vec w_h, \vec \phi; \widehat \mu}
  =
  \begin{pmatrix}
    0
    \\
    -\bih{v_1}{\phi_2; \widehat \mu}
    \\
    -\bih{v_2}{\phi_3; \widehat \mu}
 \end{pmatrix},
\end{equation}
and
\begin{equation}
  \bih{v}{\phi; \widehat \mu }
  =
  \int_{\T{}} \widehat \mu \nabla v \cdot \nabla {\phi}
  -
  \int_\E
  \jump{v} \cdot \avg{\widehat \mu \nabla {\phi}}
  +
  \jump{\phi} \cdot \avg{\widehat \mu \nabla v}
  -
  \frac{\sigma(\mu)}{h} \jump{v} \cdot \jump{\phi}
\end{equation}
with $\sigma(\mu) = 10\mu$ as the penalty parameter.

As in the previous tests we make use of a first order IMEX scheme for
the timestepping and Richtmyer fluxes for $\vec F$. We take the domain
as a rectangular region $[-1, 7.5]\times [-0.7, 0.7]$ with a circular
hole centered at the origin of radius $0.05$. In this case $\max h_K
\approx 0.14$ occuring near the upper and lower boundaries and $\min
h_K \approx 0.00008$ occuring near the hole. The initial data we use is
\begin{equation}
  \rho(\vec x, 0) = 1 \qquad \vec v(\vec x, 0) = \qp{1,0}^\transpose.
\end{equation}
We impose slip boundary conditions on the top and bottom of the
rectangular region, an inflow and outflow on the left and right hand
side respectively, compatible with the initial conditions and a no
slip condition on the cylinder itself.

The results are given in Figure \ref{fig:karman-vel} which shows the
vorticity, $\w = \partial_x v_2 - \partial_y v_1$, of the simulation
with model adaptivity and the location of where diffusion was switched
on. Figure \ref{fig:karman-err} gives an indication of the qualitative
difference between the adaptive approximation and the simulation of
the full INS system for $T=6.02$.

\begin{figure}[!ht]
  \caption[]
          {\label{fig:karman-vel}
            A numerical experiment testing model adaptivity on the Isothermal Navier-Stokes system. The simulation is described in \S \ref{sec:INS}. Here we display the vorticity, $\w = \partial_x v_2 - \partial_y v_1$, of the solution at various times (top) together with a representation of both $\mu$ (bottom). Blue is the region $\mu=0$, where the simplified (Euler system) problem is being computed and red is where $\mu \neq 0$, where the full (Isothermal Navier-Stokes system) problem is being computed.
          }
          \begin{center}
            \subfigure[{
                $t=0.04$
            }]{
              \includegraphics[scale=\figscale,width=0.47\figwidth]{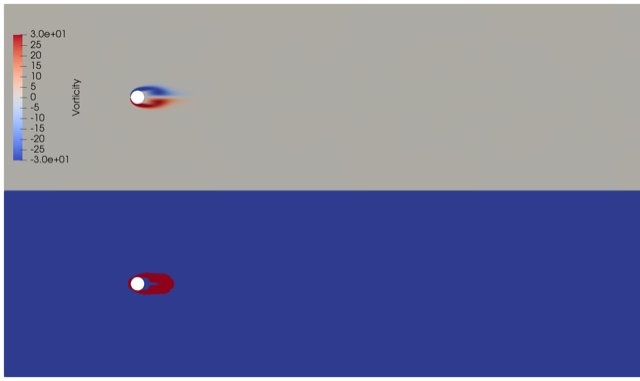}
            }
            \hfill
            \subfigure[{
                $t=0.37$
            }]{
              \includegraphics[scale=\figscale,width=0.47\figwidth]{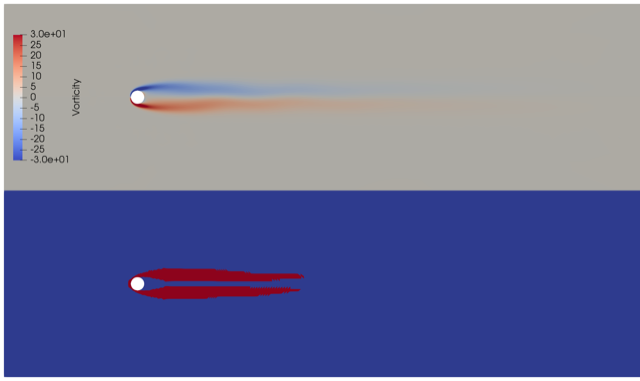}
            }
                        \subfigure[{
                $t=0.94$
            }]{
              \includegraphics[scale=\figscale,width=0.47\figwidth]{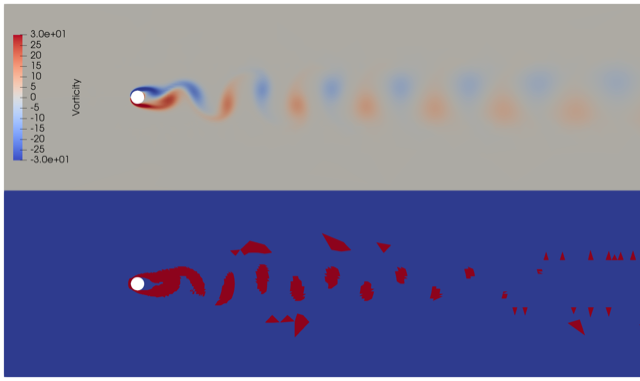}
            }
            \hfill
            \subfigure[{
                $t=1.5$
            }]{
              \includegraphics[scale=\figscale,width=0.47\figwidth]{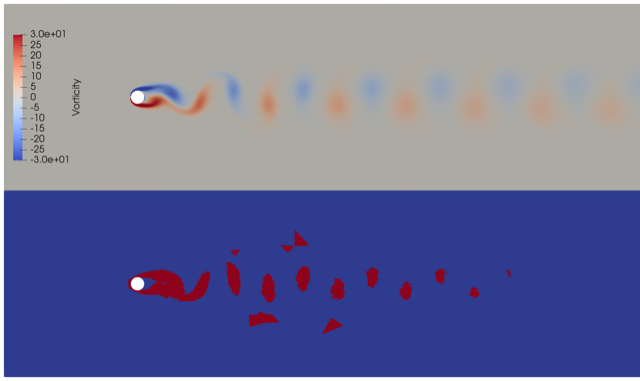}
            }
                       \subfigure[{
                $t=2.46$
            }]{
              \includegraphics[scale=\figscale,width=0.47\figwidth]{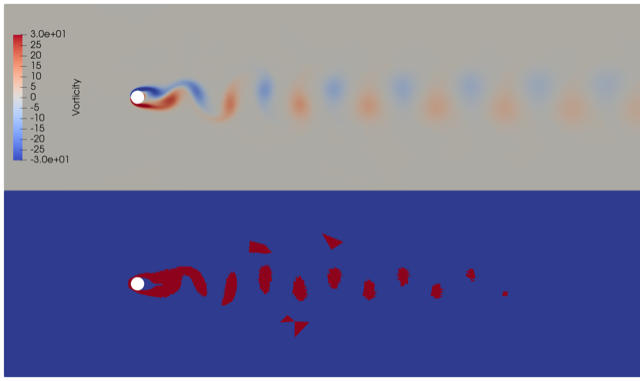}
            }
            \hfill
            \subfigure[{
                $t=6.02$
            }]{
              \includegraphics[scale=\figscale,width=0.47\figwidth]{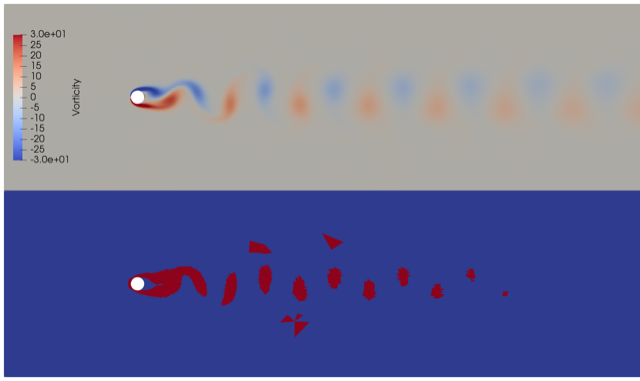}
            }
          \end{center}
\end{figure}

\begin{figure}[h!]
  \caption[]
          {\label{fig:karman-err}
            A numerical experiment testing model adaptivity on the Isothermal Navier-Stokes system. The simulation is described in \S \ref{sec:INS}. Here we display the error induced through the model adaptive strategy. We plot $\norm{\vec w_h - \vec w_{\mu, h}}$, that is, the difference between the approximation of the full expensive model and that of the adaptive approximation. Note that $\Norm{\vec w_{\mu,h}}_{\leb{\infty}} \approx 1.7$ hence the relative error at final time is around 15\%.
          }
          \begin{center}
            \subfigure[{
                $t=0.04$
            }]{
              \includegraphics[scale=\figscale,width=0.4\figwidth]{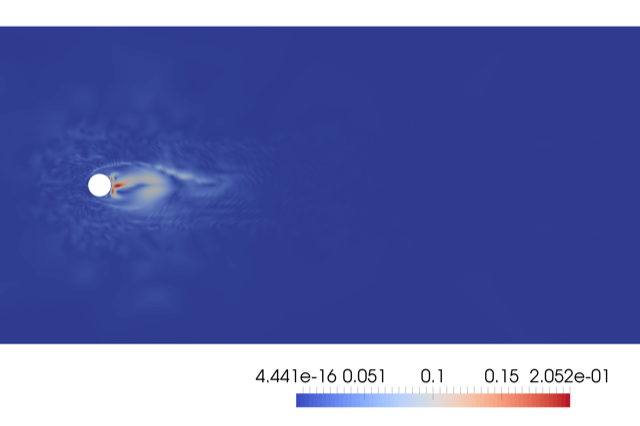}
            }
            \hfill
            \subfigure[{
                $t=6.02$
            }]{
              \includegraphics[scale=\figscale,width=0.4\figwidth]{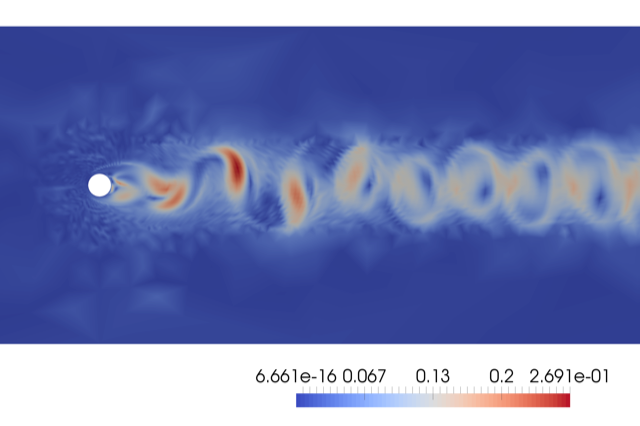}
            }
          \end{center}
\end{figure}

\subsection{Test 4 : The Navier-Stokes-Fourier system}
\label{sec:NSF}
In this test we detail the application of full spatial-model
adaptivity to the scenarios of classical forward facing step problem
as well as fluid flow around an aerofoil. We simulate the
Navier-Stokes-Fourier system which is given by seeking $\qp{\rho_\mu,
  \rho_\mu\bv_\mu, e_\mu}$ such that
\begin{equation}
  \label{eq:NSF3}
  \begin{split}
    \partial_t \rho_\mu + \div(\rho_\mu \bv_\mu) &=0\\
    \partial_t (\rho_\mu \bv_\mu) + \div(\rho_\mu \bv_\mu  \otimes \bv_\mu) + \nabla p &= \div (\mu \nabla \bv_\mu) \\
    \partial_t e_\mu + \div((e_\mu+ p)\bv_\mu )  &= \div (\mu(\nabla \bv_\mu) \cdot \bv_\mu + \mu \nabla T_\mu),
  \end{split}
\end{equation}
in a supersonic regime. In this case our ``complex'' model is given by (\ref{eq:NSF3}) and the approximation is given by
\begin{equation}
  \label{eq:EF}
  \begin{split}
    \partial_t \rho + \div(\rho \bv) &=0\\
    \partial_t (\rho \bv) + \div(\rho \bv  \otimes \bv) + \nabla p &= 0 \\
    \partial_t e + \div((e+ p)\bv )  &= 0.
  \end{split}
\end{equation}
The specific parameters are detailed in (\ref{eq:ideal}). As in Test 3
we can rewrite this as a system with $\vec w = \qp{\rho_\mu, \rho_\mu
  \bv_\mu, e_\mu}$ for the unknowns and (\ref{eq:NS}) becomes
\begin{equation}
  \pd t {\vec w} + \div(\vec f(\vec w)) = \vec D(\vec w),
\end{equation}
with
\begin{equation}
  \vec D(\vec w) =
  \begin{bmatrix}
    0
    \\
    \div\qp{\mu \nabla \bv_\mu}
    \\
    \div\qp{\mu \qp{\nabla \bv_\mu}\cdot \bv_\mu + \mu \nabla T_\mu}.
  \end{bmatrix}
\end{equation}
representing the diffusion term.
We choose a piecewise linear dG scheme similar to that in previous tests of the form
\begin{equation}\label{eq:nsfdg}
\begin{split}
  \int_{\cT} \partial_t \vec w_h \cdot \vec \phi -  \vec f(\vec w_h) \cdot \nabla \vec \phi + \int_{\cE} \vec F (\vec w_h^-, \vec w_h^+) \jump{\vec \phi}
  &=
  \cD\qp{{\vec w_h,\vec \phi; \widehat \mu }}
  \quad \text{for all } \vec \phi \in \fes_q,
  \\
  u_h(0) &= \cP_q [u_0]
\end{split}
\end{equation}
where, in $2d$,
\begin{equation}
  \vec f(\vec w)
  =
  \begin{bmatrix}
    \rho v_1 & \rho v_2
    \\
    \rho v_1^2 + p & \rho v_1 v_2
    \\
    \rho v_2 v_1 & \rho v_2^2 + p
    \\
    (e + p) v_1 & (e + p) v_2
  \end{bmatrix},
\end{equation}
\begin{equation}
  \cD\qp{\vec w_h, \vec \phi; \widehat \mu}
  =
  \begin{pmatrix}
    0
    \\
    -\bih{v_1}{\phi_2; \widehat \mu}
    \\
    -\bih{v_2}{\phi_3; \widehat \mu}
    \\
    -\bih{v_1}{\phi_4; v_1 \widehat \mu}
    -\bih{v_2}{\phi_4; v_2 \widehat \mu}
    -\bih{T}{\phi_4; \widehat \mu}
  \end{pmatrix}.
\end{equation}

We make use of a 3rd order strong stability preserving Runge-Kutta
IMEX scheme for the temporal discretisation, a piecewise linear dG
scheme with the Richtmyer fluxes (\ref{eq:Richtmyer}) and an IP
discretisation of both dissipative terms. Note that in these
experiments we do not compute the difference between the adaptive
approximation and the ``complex model''. The reason is that a fair and
accurate comparison would require the simulation of the full INS
system on a uniform mesh of mesh width matching the smallest mesh
width in the adaptive approximation. This is computationally
unfeasible.

The algorithm we used to simulate this problem is given in pseudocode
below. This is a maximum type strategy where elements with the largest
local error in space, determined by the indicator $\cE_D$ are
refined. In addition we solve the full NSF system over those elements
with largest modelling error, determined by the indictor $\cE_M$. 

\subsection{{$\Algoname{Spatial-Model Adaptation}$}}
\label{alg:spatial-model-adapt}
\begin{algorithmic}
  \Require
  $(\tau,t_0,T,\vec w^0, \tol, \tol_c,\mu)$

  \Ensure $(\vec w_h^n)_{\rangefromto n1N}$, spatial-model adaptive solution

  \State $\widehat \mu(x,0):=0$
  \Comment{Initialise parameters}

  \State $t = t_0 + \tau, n=1$

  \While{$t\leq T$}
  \Comment{Loop in time}

  \State
  $(\vec w_h^n) := \Algoname{Solve one timestep of dg scheme} (\vec w_h^{n-1},\widehat \mu)$
  \State
  Compute $\cE_D \AND \cE_M$  
  \For{$K\in \T{}$}
  
  \If{$\cE_M|_K < \tol_c \max_{J\in\T{}} \cE_M|_J$}
  \Comment{Model ``coarsening'' strategy}  
  \State $\widehat \mu(x,t)|_K = 0$
  \EndIf

  \If{$\cE_D|_K < \tol_c \max_{J\in\T{}} \cE_D|_J$}
  \Comment{Mesh coarsening strategy}  
  \State Mark $K$ for coarsening
  \EndIf

  \EndFor

  \State Coarsen marked elements
  
  \State Recompute $\cE_D \AND \cE_M$  
  
  \For{$K\in \T{}$}  

  \If{$\cE_M|_K \geq \tol \max_{J\in\T{}} \cE_M|_J$}
  \Comment{Model ``refinement'' strategy}  
  \State $\widehat \mu(x,t)|_K = \mu$
  \EndIf
  
  \If{$\cE_D|_K \geq \tol \max_{J\in\T{}} \cE_D|_J$}
  \Comment{Mesh refinement strategy}  
  \State Mark $K$ for refinement
  \EndIf

  \EndFor

  \State Refine marked elements
  
  \State $t := t + \tau, n := n+1$
    
  \EndWhile
  
  \State return $(\vec w_h^n)_{\rangefromto n1N}$,
\end{algorithmic}

\begin{Rem}[Comparing the two model adaptive strategies]
  The strategy proposed in Algorithm \ref{alg:spatial-model-adapt} has
  a distinct disadvantage over Algorithm \ref{alg:model-adapt} since
  the ``complex'' model will be solved on a portion of the domain from
  the very beginning of the simulation. Consider, for example, Tests 1
  and 2 where the solution remains smooth for some time. Applying
  Algorithm \ref{alg:spatial-model-adapt} to these problems results in
  the model adaptivity being switched on before it is really
  needed. For these supersonic NSF simulations the solution is
  nonsmooth from the very beginning of the simulation and linking
  model adaptivity with mesh refinement is done quite simply with the
  implementation of Algorithm \ref{alg:spatial-model-adapt} with
  successful results as can be seen from Figures \ref{fig:step} and
  \ref{fig:naca}.

  Note also that a feature of both strategies, due to the form of the
  estimator $\cE_M$, is that in the scalar case the ``model
  coarsening'' step results in \emph{all} cells being coarsened and
  then new cells selected for ``model refinement'' immediately
  afterwards. This ensures that the complex model is only used on
  cells where it is truly needed.
\end{Rem}

\subsection*{Forward step parameters} The numerical domain we use for the forward step problem is the $[0,0] \times [3/2, 3/4] \not \ \ [1/2, 0] \times [3/2, 1/4]$. We select $\mu = 10^{-6}$. We produce a uniform initial mesh with $h = 0.01$ and allow a maximum refinement level of $2$. This means the minimum meshsize through the whole simulation is $h = 0.0025$. We take a uniform timestep of $\tau \approx 0.00062$. Adaptation is described with Algorithm \ref{alg:spatial-model-adapt}. We take initial conditions
\begin{equation}
    \rho(\vec x, 0) = 1.4 \qquad \bv(\vec x, 0) = \qp{3,
      0}^\transpose \qquad e(\vec x, 0) = 8
\end{equation}
and enforce boundary conditions that match the initial conditions on
the left boundary. We enforce slip boundary conditions on the top and
bottom boundaries and an outflow boundary condition on the right
boundary. With the speed of sound and Mach number defined as
\begin{equation}
  \label{speed}
  a = \sqrt{\norm{\frac{\gamma p}{\rho}}} \qquad M = \frac{\norm{\bv}}{a},
\end{equation}
respectively, we compute that for this experiment $a \approx 0.82$ and
hence $M \approx 3.66$.

We use Algorithm \ref{alg:spatial-model-adapt} with $\tol =
0.5$ and $\tol_c = 0.1$. Figure \ref{fig:step} gives some snapshots of
the solution, the model adaptivity parameter and the underlying mesh
at various times.

\subsection*{Aerofoil parameters}
We used data from the Langely Research Center website to construct a
NACA 0012 aerofoil in a box $[-1/2, -2] \times [3,2]$. The aerofoil
leading edge is situated at the origin and tail is at $[1,0]$. We
select $\mu = 10^{-6}$. We produce a quasiuniform initial mesh with $h
\approx 0.1$ and allow a maximum refinement level of $4$. This means
the minimum meshsize throughout the whole simulation is $h\approx
0.00625$. Note that due to the shape of the aerofoil the mesh is not
Cartesian. This means the reconstruction proposed in
\S\ref{subs:rec:2d} is not valid. To compute an approximation to this
reconstruction we overlay a Cartesian mesh and take the nodal values
as an average of the element (or elements) containing this node. We
take a uniform timestep of $\tau \approx 0.0018$. Adaptation is
described with Algorithm \ref{alg:model-adapt}. We take initial
conditions
\begin{equation}
  \rho(\vec x, 0) = 1 \qquad \bv(\vec x, 0) = \qp{3/2,
    0}^\transpose \qquad e(\vec x, 0) = 5
\end{equation}
and enforce boundary conditions that match the initial conditions on
the left boundary. We enforce slip boundary conditions on the top and
bottom boundaries as well as on the aerofoil and an outflow boundary
condition on the right boundary. Using (\ref{speed}) we compute
$a\approx 1.47$ and $M \approx 1.02$.

We use Algorithm
\ref{alg:spatial-model-adapt} with $\tol = 0.5$ and $\tol_c =
0.1$. Figure \ref{fig:naca} gives some snapshots of the solution, the
model adaptivity parameter and the underlying mesh at various times.

\begin{Rem}[Coupling of viscosity and heat conduction]
  In our experiment we choose $\mu$ as both the coefficient of
  viscosity and the heat conduction. In practical situations these
  parameters may scale differently and by splitting the adaptation
  estimator the model adaptivity can be conducted independently for
  both the viscous and heat conduction term. We will not persue this
  here.
\end{Rem}

\begin{figure}[!ht]
  \caption[]{\label{fig:step}
    An experiment using spatial-model adaptivity on the forward facing
    step problem. We plot the density and the model adaptivity
    parameter together with the underlying mesh. Notice that the mesh
    correctly adapts as the shocks form and propogate and that model
    adaptivity is on when the solution is close to shocking and near
    the step on the boundary, where the solution is most singular.
  }
  \subfigure[{
      Density at $t=0.0774$.
  }]{
    \includegraphics[scale=\figscale,width=0.3\figwidth]{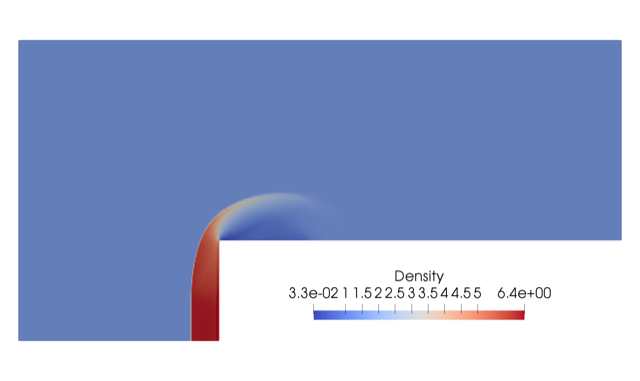}
  }
  \hfill
  \subfigure[{
      $\varepsilon$   at $t=0.0774$.
  }]{
    \includegraphics[scale=\figscale,width=0.3\figwidth]{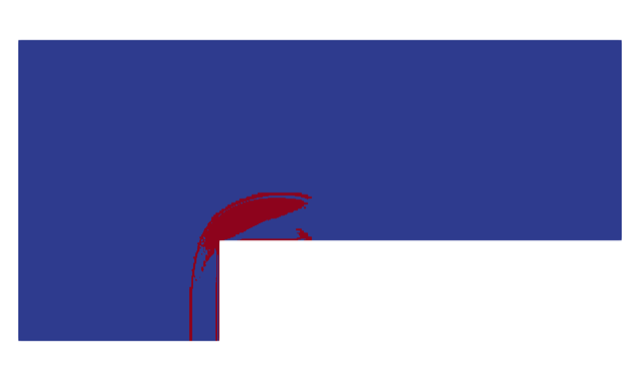}
  }
  \hfill
  \subfigure[{
      Mesh at $t=0.0774$.
  }]{
    \includegraphics[scale=\figscale,width=0.3\figwidth]{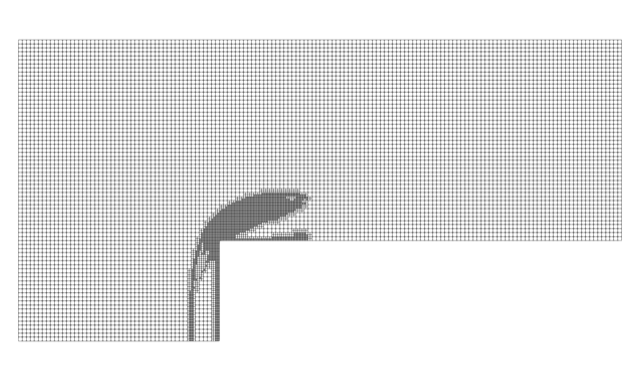}
  }
  \hfill
  \subfigure[{
      Density at $t=0.2477$.
  }]{
    \includegraphics[scale=\figscale,width=0.3\figwidth]{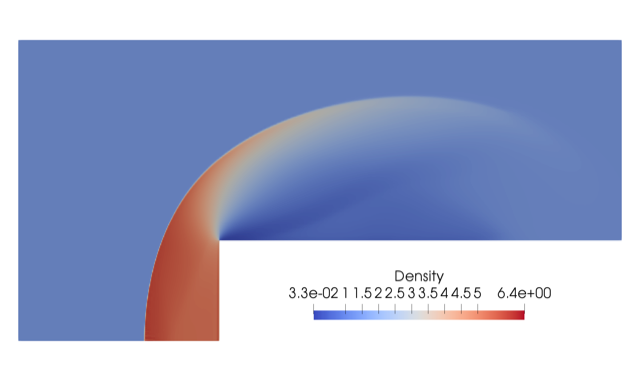}
  }
  \hfill
  \subfigure[{
      $\varepsilon$   at $t=0.2477$.
  }]{
    \includegraphics[scale=\figscale,width=0.3\figwidth]{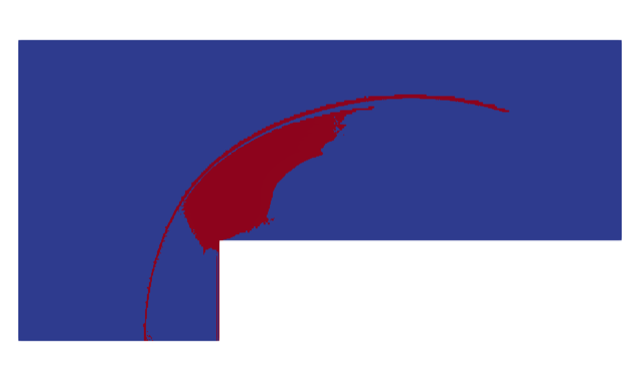}
  }
  \hfill
  \subfigure[{
      Mesh at $t=0.2477$.
  }]{
    \includegraphics[scale=\figscale,width=0.3\figwidth]{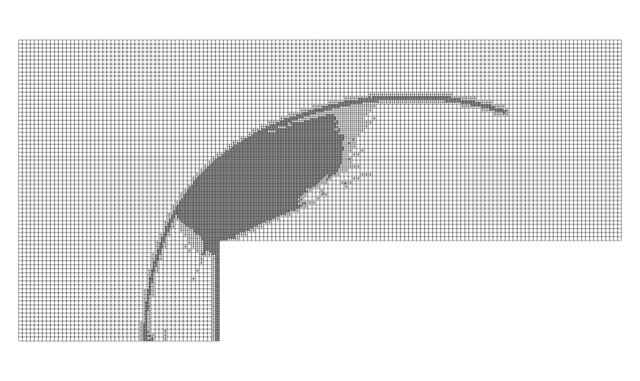}
  }
  \hfill
  \subfigure[{
      Density at $t=0.3715$.
  }]{
    \includegraphics[scale=\figscale,width=0.3\figwidth]{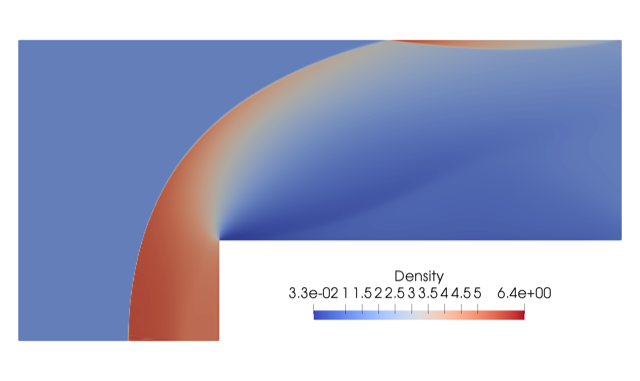}
  }
  \hfill
  \subfigure[{
      $\varepsilon$   at $t=0.3715$.
  }]{
    \includegraphics[scale=\figscale,width=0.3\figwidth]{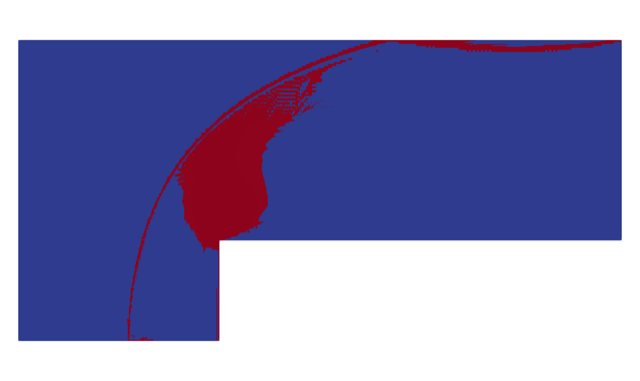}
  }
  \hfill
  \subfigure[{
      Mesh at $t=0.3715$.
  }]{
    \includegraphics[scale=\figscale,width=0.3\figwidth]{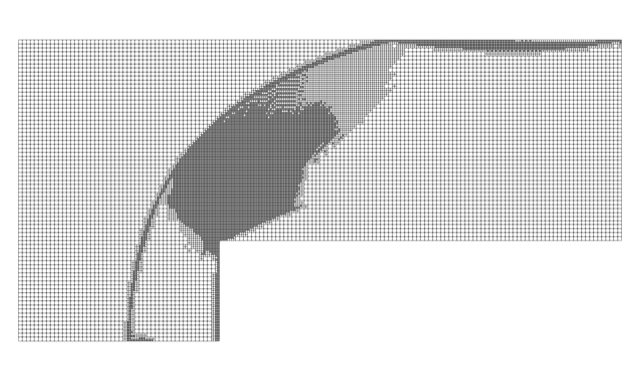}
  }
  \hfill
  \subfigure[{
      Density at $t=0.743$.
  }]{
    \includegraphics[scale=\figscale,width=0.3\figwidth]{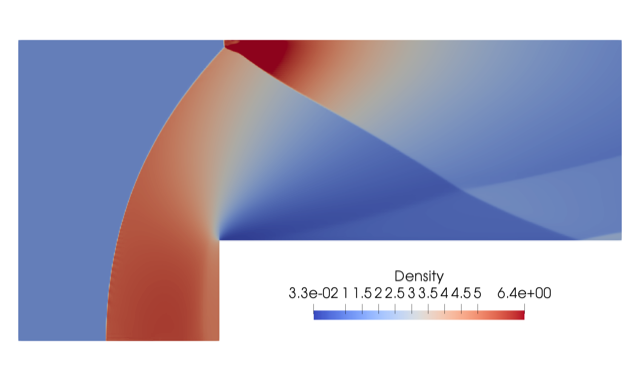}
  }
  \hfill
  \subfigure[{
      $\varepsilon$   at $t=0.743$.
  }]{
    \includegraphics[scale=\figscale,width=0.3\figwidth]{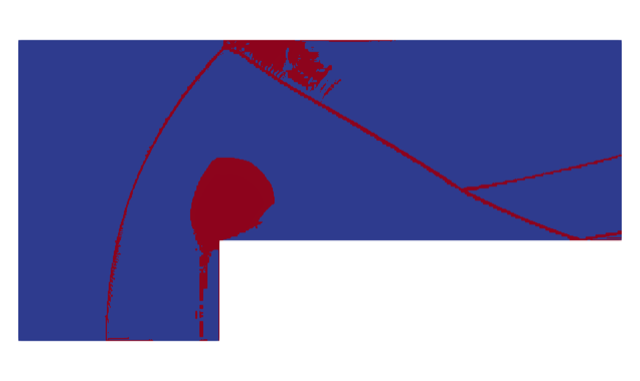}
  }
  \hfill
  \subfigure[{
      Mesh at $t=0.743$.
  }]{
    \includegraphics[scale=\figscale,width=0.3\figwidth]{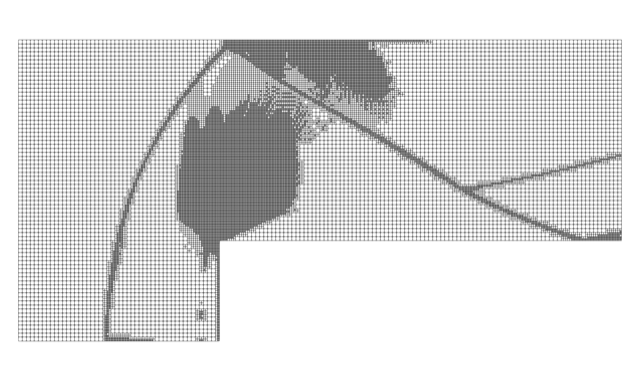}
  }
  \hfill
  \subfigure[{
     Density at $t=1.2384$.
  }]{
    \includegraphics[scale=\figscale,width=0.3\figwidth]{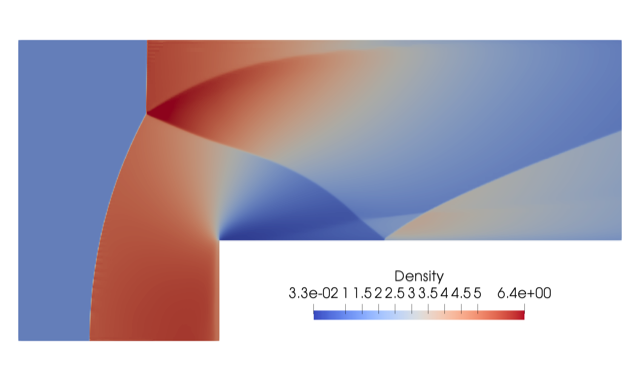}
  }
  \hfill
  \subfigure[{
      $\varepsilon$   at $t=1.2384$.
  }]{
    \includegraphics[scale=\figscale,width=0.3\figwidth]{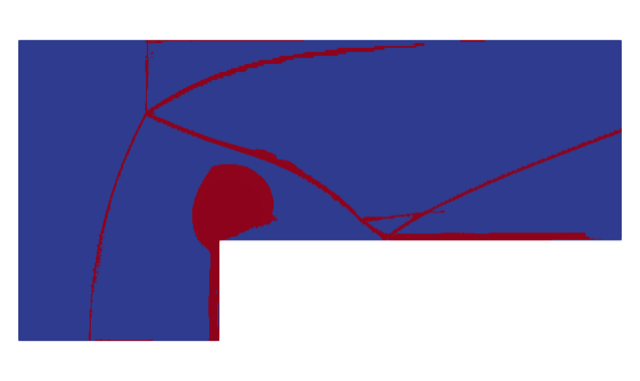}
  }
  \hfill
  \subfigure[{
      Mesh at $t=1.2384$.
  }]{
    \includegraphics[scale=\figscale,width=0.3\figwidth]{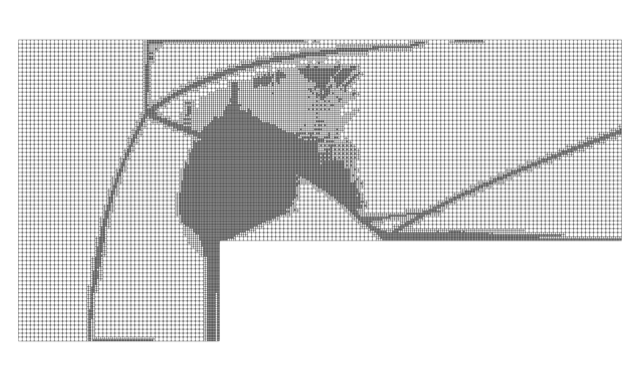}
  }
  \hfill
  \subfigure[{
      Density at $t=1.8576$.
  }]{
    \includegraphics[scale=\figscale,width=0.3\figwidth]{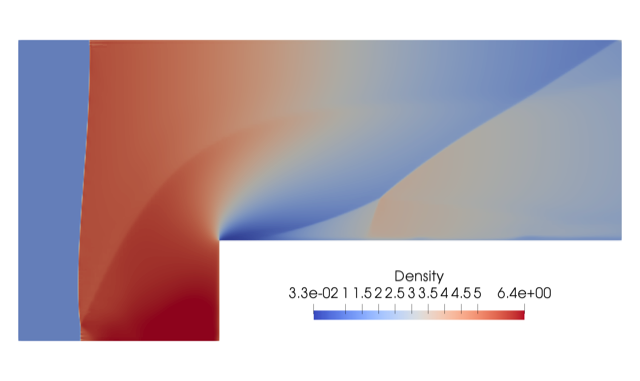}
  }
  \hfill
  \subfigure[{
      $\varepsilon$   at $t=1.8576$.
  }]{
    \includegraphics[scale=\figscale,width=0.3\figwidth]{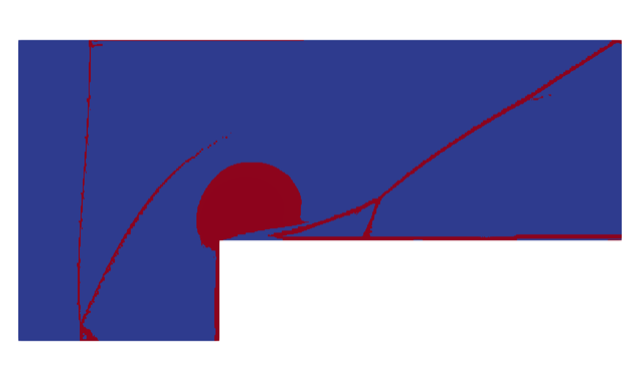}
  }
  \hfill
  \subfigure[{
      Mesh at $t=1.8576$.
  }]{
    \includegraphics[scale=\figscale,width=0.3\figwidth]{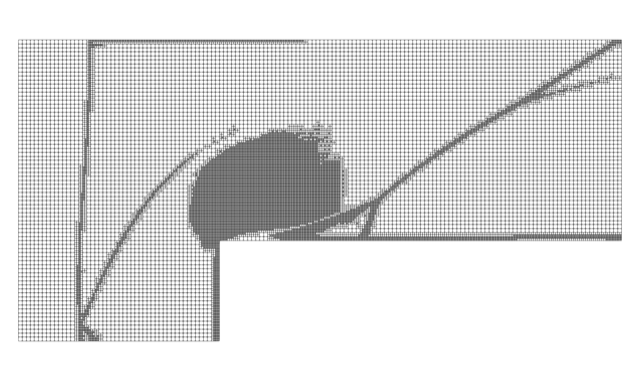}
  }
\end{figure}

\begin{figure}[!ht]
  \caption[]{\label{fig:naca}
    An experiment using spatial-model adaptivity for flow around an
    aerofoil. We plot the density and the model adaptivity parameter
    together with the underlying mesh. Notice that the mesh correctly
    adapts as the shocks form and propogate and that model adaptivity
    is on when the solution is close to shocking.
  }
  \subfigure[{
      Density at $t=0.0363$.
  }]{
    \includegraphics[height=0.3\figwidth,width=0.3\figwidth]{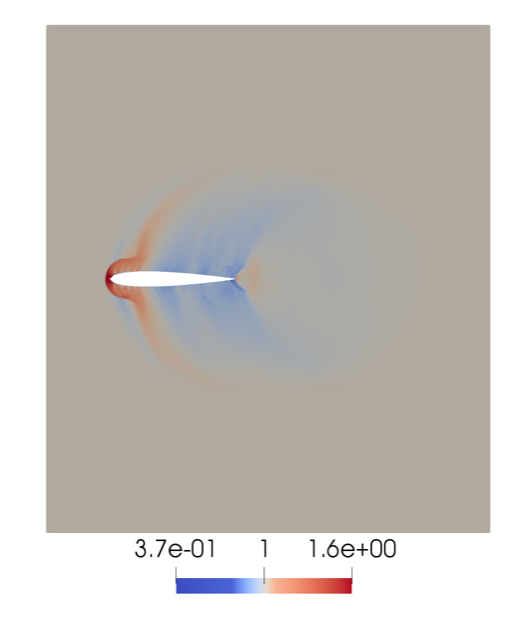}
  }
  \hfill
  \subfigure[{
      $\varepsilon$   at $t=0.0363$.
  }]{
    \includegraphics[height=0.3\figwidth,width=0.3\figwidth]{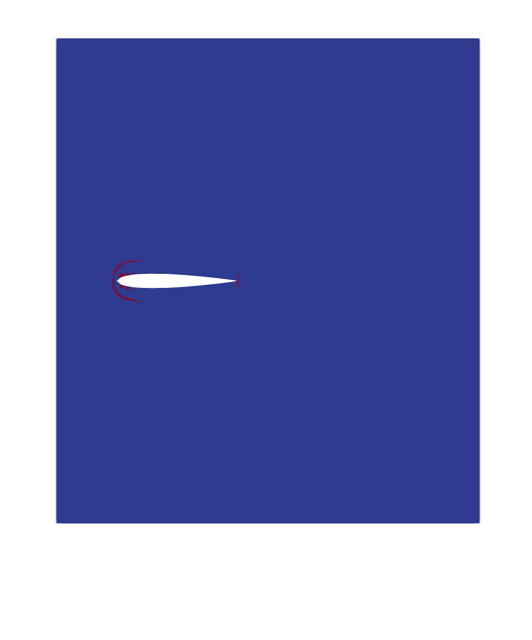}
  }
  \hfill
  \subfigure[{
      Mesh at $t=0.0363$.
  }]{
    \includegraphics[height=0.3\figwidth,width=0.3\figwidth]{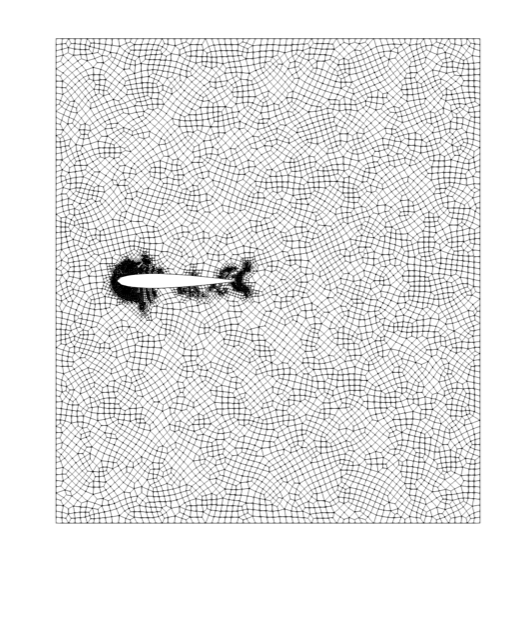}
  }
  \hfill
  \subfigure[{
      Density at $t=0.127$.
  }]{
    \includegraphics[height=0.3\figwidth,width=0.3\figwidth]{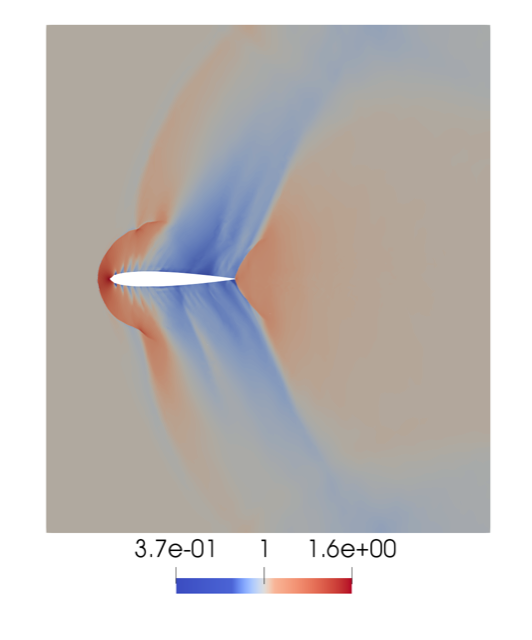}
  }
  \hfill
  \subfigure[{
      $\varepsilon$   at $t=0.127$.
  }]{
    \includegraphics[height=0.3\figwidth,width=0.3\figwidth]{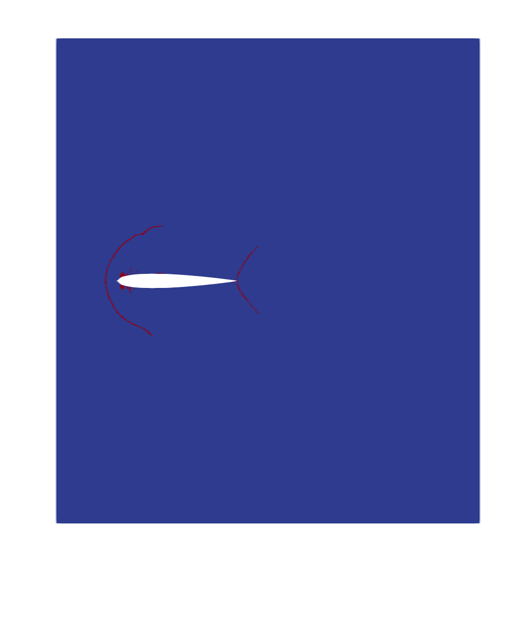}
  }
  \hfill
  \subfigure[{
      Mesh at $t=0.127$.
  }]{
    \includegraphics[height=0.3\figwidth,width=0.3\figwidth]{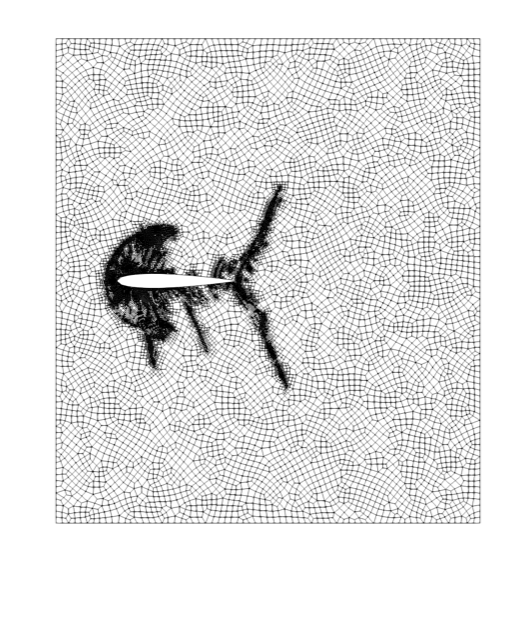}
  }
  \hfill
  \subfigure[{
      Density at $t=0.363$.
  }]{
    \includegraphics[height=0.3\figwidth,width=0.3\figwidth]{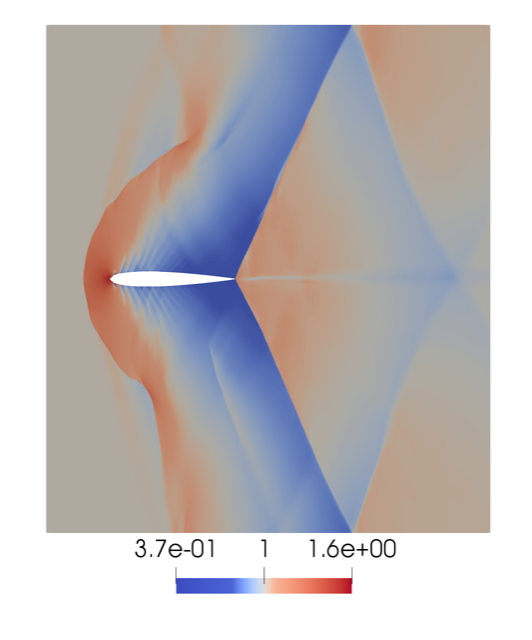}
  }
  \hfill
  \subfigure[{
      $\varepsilon$   at $t=0.363$.
  }]{
    \includegraphics[height=0.3\figwidth,width=0.3\figwidth]{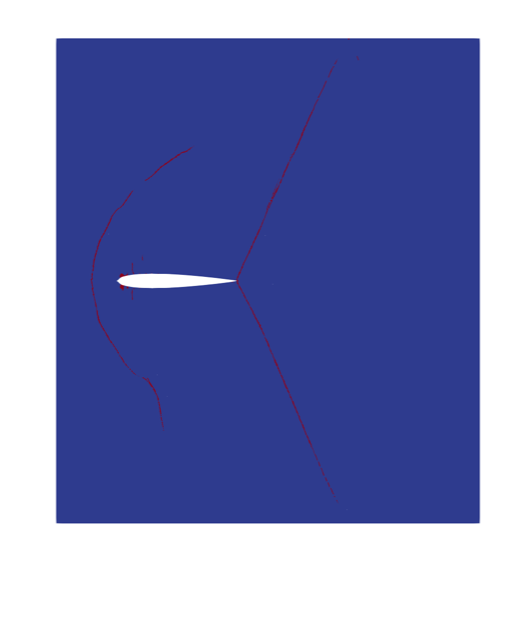}
  }
  \hfill
  \subfigure[{
      Mesh at $t=0.363$.
  }]{
    \includegraphics[height=0.3\figwidth,width=0.3\figwidth]{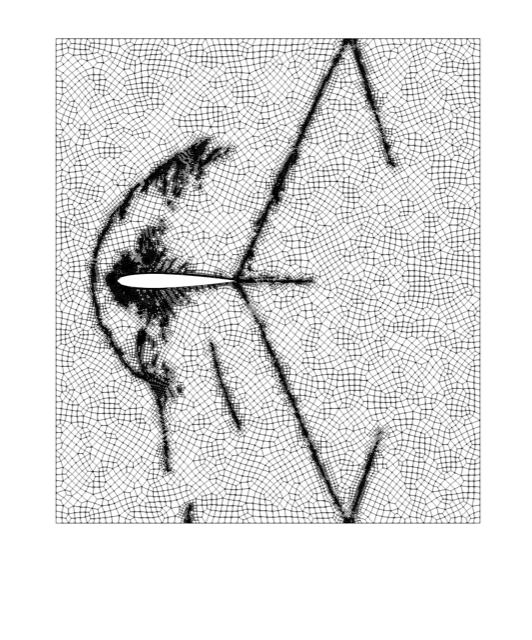}
  }
  \hfill
  \subfigure[{
     Density at $t=0.9074$.
  }]{
    \includegraphics[height=0.3\figwidth,width=0.3\figwidth]{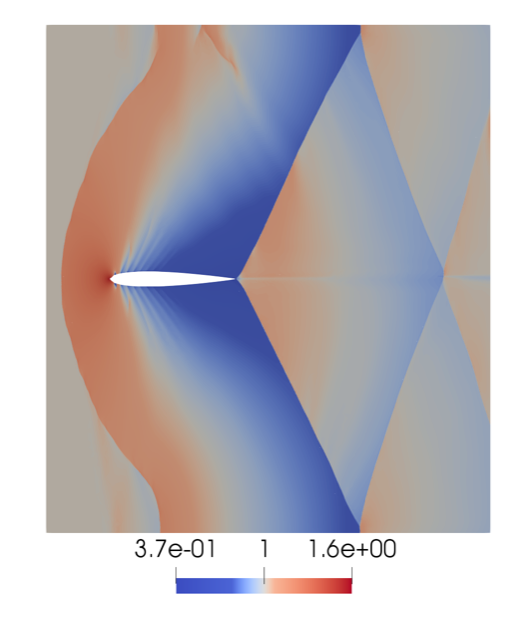}
  }
  \hfill
  \subfigure[{
      $\varepsilon$   at $t=0.9074$.
  }]{
    \includegraphics[height=0.3\figwidth,width=0.3\figwidth]{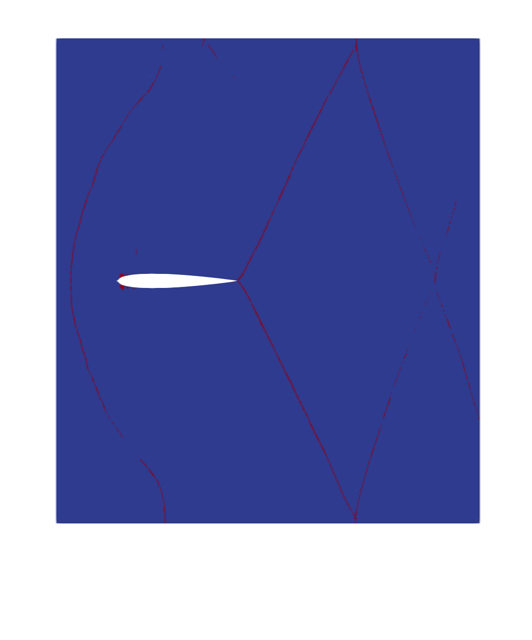}
  }
  \hfill
  \subfigure[{
      Mesh at $t=0.9074$.
  }]{
    \includegraphics[height=0.3\figwidth,width=0.3\figwidth]{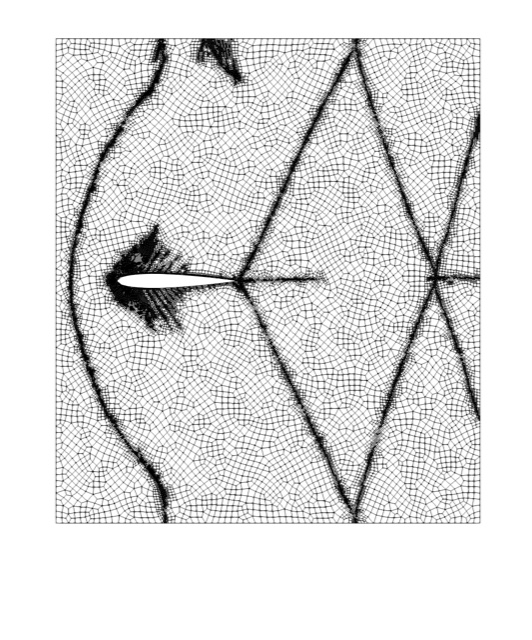}
  }
\end{figure}

 \bibliographystyle{alpha}
 \bibliography{nskbib}
\end{document}